\title{Cartier smoothness in prismatic cohomology}
\author{Tess Vincent Bouis}
\date{}
\newlength{\outermargin} \setlength{\outermargin}{2.5cm}
\newlength{\mar} \setlength{\mar}{1cm}
\newlength{\len}
\newlength{\temp}\setlength{\temp}{\paperwidth}
\newtheorem{theorem}{Theorem}[section]
\newtheorem{lemma}[theorem]{Lemma}
\newtheorem{notation}[theorem]{Notation}
\newtheorem{corollary}[theorem]{Corollary}
\newtheorem{proposition}[theorem]{Proposition}
\newtheorem{definition}[theorem]{Definition}
\newtheorem{construction}[theorem]{Construction}
\newtheorem{theoremintro}{Theorem}
\theoremstyle{definition}
\newtheorem{remark}[theorem]{Remark}
\newtheorem{example}[theorem]{Example}
\newtheorem{examples}[theorem]{Examples}
\DeclareMathOperator{\Z}{\mathbb{Z}}
\DeclareMathOperator{\Q}{\mathbb{Q}}
\DeclareMathOperator{\R}{\mathbb{R}}
\DeclareMathOperator{\N}{\mathbb{N}}
\DeclareMathOperator{\F}{\mathbb{F}}
\renewcommand{\epsilon}{\varepsilon}
\let\l\relax
\newcommand{\l}{\ell}
\DeclareSymbolFontAlphabet{\mathbb}{AMSb}
\DeclareSymbolFontAlphabet{\mathbbl}{bbold}
\newcommand{\Prism}{{\mathlarger{\mathbbl{\Delta}}}}
\begin{document}

\pagestyle{fancy}
\fancyhead[EC]{TESS VINCENT BOUIS}
\fancyhead[OC]{CARTIER SMOOTHNESS IN PRISMATIC COHOMOLOGY}
\fancyfoot[C]{\thepage}

\maketitle

\begin{abstract}

We introduce the notion of a $p$\nobreakdash-Cartier smooth algebra. It generalises that of a smooth algebra and includes valuation rings over a perfectoid base. We give several characterisations of $p$\nobreakdash-Cartier smoothness in terms of prismatic cohomology, and deduce a comparison theorem between syntomic and étale cohomologies under this hypothesis.

\end{abstract}

\tableofcontents

\section{Introduction}\label{sec:intro}

\vspace{-\parindent}
\hspace{\parindent}

Let $p$ be a prime number. The Cartier isomorphism is a fundamental cohomological property of smooth $\F_p$-algebras. It states that for any smooth $\F_p$-algebra $A$, the inverse Cartier map
$$C^{-1} : \left\{  \begin{array}{ll} \Omega^n_{A/\F_p} \longrightarrow \text{H}^n(\Omega^\bullet_{A/\F_p}) \\ f dg_1 \wedge \dots \wedge dg_n \longmapsto f^p g_1^{p-1} \dots g_n^{p-1} dg_1 \wedge \dots \wedge dg_n
\end{array} \right.$$
is an isomorphism of $\F_p$-vector spaces for each $n\geq 0$. Assuming resolution of singularities in characteristic $p$, any $\F_p$-algebra would be locally smooth in the cdh-topology. The local rings in the cdh-topology are valuation rings, so valuation rings of characteristic $p$ should be filtered colimits of smooth $\F_p$-algebras, and in particular should satisfy the Cartier isomorphism. Without assuming resolution of singularities, Gabber proved \cite[Appendix]{kerz_towards_2021} that valuation rings of characteristic $p$ actually satisfy the Cartier isomorphism. Motivated by this result and the general lack of understanding of valuation rings, Kelly and Morrow \cite{kelly_k-theory_2021} introduced {\it Cartier smooth algebras} as $\F_p$\nobreakdash-algebras satisfying the Cartier isomorphism, and study their algebraic $K$\nobreakdash-theory.

We develop here an analogue of this story in mixed-characteristic. The inverse Cartier map, as an extension of the Frobenius morphism, is specific to characteristic $p$, and we define $p$\nobreakdash-Cartier smoothness for general ring homomorphisms essentially in terms of their reductions modulo $p$. This more general notion coincides with that introduced by Kelly--Morrow \cite{kelly_k-theory_2021} in the special case that the base ring is $\F_p$. For an $\F_p$-algebra $R$, we denote by $\phi_R$ its Frobenius endomorphism.

\needspace{5\baselineskip}

\begin{definition}[Cartier smoothness]
    \begin{enumerate}
        \item A morphism $R\rightarrow S$ of $\F_p$-algebras is \emph{Cartier smooth}, or $S$ is a \emph{Cartier smooth} $R$-algebra, if it is cotangent smooth, {\it i.e.}, its cotangent complex $\mathbb{L}_{S/R}$ is a flat $S$-module in degree $0$ given by $\Omega^1_{S/R}$, and if the inverse Cartier map
        $$C^{-1} : \Omega^n_{S/R} \otimes_{R,\phi_R} R \longrightarrow \emph{H}^n(\Omega^{\bullet}_{S/R})$$ is an isomorphism of $R$-modules for each $n\geq 0$.
        \item A morphism $R\rightarrow S$ of commutative rings is \emph{$p$\nobreakdash-Cartier smooth}, or $S$ is a \emph{$p$\nobreakdash-Cartier smooth} $R$\nobreakdash-algebra, if the natural map $S\otimes_R^\mathbb{L}R/p\longrightarrow S/p[0]$ is an equivalence in the derived category $\mathcal{D}(R/p)$ and the reduction $R/p\rightarrow S/p$ modulo $p$ is Cartier smooth.
    \end{enumerate}
\end{definition}

Said another way, a morphism $R \rightarrow S$ of commutative rings is $p$\nobreakdash-Cartier smooth if it is $p$\nobreakdash-cotangent smooth (Definition~\ref{Definitionquasismooth}) and if its reduction $R/p \rightarrow S/p$ modulo $p$ satisfies the Cartier isomorphism. The $p$\nobreakdash-cotangent smoothness hypothesis is necessary not to lose control, when specialising to characteristic~$p$, on the invariants we will study. Any smooth morphism of commutative rings is $p$\nobreakdash-Cartier smooth (for any prime $p$). Regarding valuation rings, an extension of valuation rings ({\it i.e.}, an injective morphism of valuation rings) in mixed-characteristic is in general not $p$\nobreakdash-Cartier smooth. For instance, the morphism $\Z_p \rightarrow \overline{\Z}_p$, where $\overline{\Z}_p$ is the ring of integers of an algebraic closure $\overline{\Q}_p$ of $\Q_p$, is not $p$\nobreakdash-cotangent smooth and does not satisfy the Cartier isomorphism: its cotangent complex is in degree $0$, given by the torsion $\overline{\Z}_p$-module $\overline{\Q}_p/\overline{\Z}_p$, and $\Omega^0_{(\overline{\Z}_p/p)/\F_p} = \overline{\Z}_p/p$ contains nilpotent elements; in fact the de Rham complex of $\F_p \rightarrow \overline{\Z}_p/p$ is zero in positive degrees, and thus does not convey a lot of information. 
However, if the base valuation ring is sufficiently ramified, these obstructions vanish and we can prove the following general result. This generalises a theorem of Gabber in characteristic $p$, which is valid over perfect valuation rings (Theorem~\ref{TheoremGabberandGabberRamero} below). We refer the reader to \cite[Definition~$3.5$]{bhatt_integral_2018} for the definition of perfectoid rings.

\begin{theoremintro}[see Theorem~\ref{mainthmvaluationrings}]\label{mainthmvaluationringsintro}
    Let $C^+$ be a valuation ring whose $p$\nobreakdash-adic completion is a perfectoid ring. Let $E^+$ be a valuation ring extension of $C^+$. Then the morphism $C^+ \rightarrow E^+$ is $p$\nobreakdash-Cartier smooth.
\end{theoremintro}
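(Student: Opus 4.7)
The plan is to verify in turn the two conditions in the definition of $p$-Cartier smoothness for the morphism $C^+ \to E^+$. The first condition, asserting that $E^+ \otimes_{C^+}^{\mathbb{L}} C^+/p$ is concentrated in degree zero, is straightforward: as a valuation ring extension, $E^+$ is $C^+$-torsion-free and hence $C^+$-flat, so $p$ is a non-zero-divisor on $E^+$ and the Koszul resolution of $C^+/p$ computes $E^+ \otimes_{C^+}^{\mathbb{L}} C^+/p \simeq E^+/p[0]$.

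The substantive part is the Cartier smoothness of the reduction $C^+/p \to E^+/p$ in characteristic $p$. The idea is to leverage the perfectoid hypothesis to reduce to Gabber's theorem (Theorem~\ref{TheoremGabberandGabberRamero}), which handles valuation ring extensions of perfect valuation rings. Since $C^+/p = \widehat{C^+}/p$ and $\widehat{C^+}$ is perfectoid, the tilting correspondence identifies $C^+/p \cong V/\bar\xi$, where $V = \widehat{C^+}^{\flat}$ is a perfect valuation ring of characteristic $p$ and $\bar\xi \in V$ is a non-zero-divisor.

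Next, I would construct a valuation ring extension $V \hookrightarrow W$ in characteristic $p$ that realises $E^+/p$ as $W/\bar\xi$. The construction proceeds by perfectoidising the $p$-adic completion of $E^+$ (or an appropriate valuation-ring-preserving enlargement of it) and passing to its tilt $W$; the valuation structure on $E^+$ transfers to $W$, making $V \to W$ a valuation ring extension. Gabber's theorem applied to $V \to W$ then gives Cartier smoothness in characteristic~$p$, and the desired Cartier smoothness of $C^+/p \to E^+/p$ is deduced by passing to the quotient by the non-zero-divisor $\bar\xi$, using that the de Rham complex and the inverse Cartier map are compatible with quotients by a regular element.

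The main obstacle is the characteristic-$p$ lift $V \to W$: ensuring that perfectoidisation and tilting can be applied in a way that preserves the valuation-ring-extension structure is delicate when $E^+$ is not itself perfectoid. The descent of Cartier smoothness along the quotient by $\bar\xi$ also requires a compatibility of the inverse Cartier map that is most naturally established via the prismatic and derived de Rham framework developed earlier in the paper.
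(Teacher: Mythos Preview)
Your overall strategy matches the paper's: reduce the Cartier smoothness of $C^+/p \rightarrow E^+/p$ to Gabber's theorem by producing a valuation ring extension $V \rightarrow W$ in characteristic $p$ with $V/\bar\xi \cong C^+/p$ and $W/\bar\xi \cong E^+/p$, then apply base change for Cartier smoothness along $V \rightarrow V/\bar\xi$. The difficulty, as you correctly identify, lies entirely in constructing $W$.

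However, your proposed construction of $W$ via perfectoidisation and tilting does not work. If you replace $\widehat{E^+}$ by its perfectoidisation $E^{+,\mathrm{perfd}}$ and set $W = (E^{+,\mathrm{perfd}})^\flat$, then $W/\bar\xi \cong E^{+,\mathrm{perfd}}/p$, which is \emph{not} $E^+/p$ unless $E^+$ was already perfectoid. (Concretely, the Frobenius on $E^+/p$ is typically not surjective, so the naive tilt $(E^+)^\flat \rightarrow E^+/p$ is not surjective either, and no amount of enlarging $E^+$ on the perfectoid side recovers the original $E^+/p$ as a quotient.) You acknowledge this as ``the main obstacle'' but do not resolve it; as stated, the argument has a genuine gap at exactly this point.

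The paper constructs $W$ by an entirely different route: Illusie's deformation theory. One first reduces (via filtered colimits and a Milnor-square argument) to the case where the fraction-field extension $C \rightarrow E$ is of finite type and $E^+$ has finite rank. One then proves independently that the cotangent complex $\mathbb{L}_{(E^+/p)/(C^+/p)}$ is concentrated in degree $0$ (this is Proposition~\ref{Propositionquasismoothnessvaluationrings}, which uses Gabber--Ramero and a pass to the algebraic closure), and that the module $\Omega^1_{(E^+/p)/(C^+/p)}$ has projective dimension at most $1$ (Lemma~\ref{lemmacgvaluationring} and Corollary~\ref{Corollaryalmostprojective}). This makes the $\mathrm{Ext}^2$ obstruction to deforming $E^+/p$ along the square-zero thickenings $C^{+\flat}/d^n \rightarrow C^{+\flat}/d^{n-1}$ vanish, yielding a $d$-complete flat $C^{+\flat}$-algebra $W$ with $W/d \cong E^+/p$. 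A separate argument (Lemma~\ref{Lemmaflatimpliesvaluationring}) is then needed to check that this abstract flat lift is in fact a valuation ring. None of this machinery is visible in your proposal.

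Finally, the ``descent of Cartier smoothness along the quotient by $\bar\xi$'' that you worry about is elementary (Lemma~\ref{LemmabasechangeforCartiersmoothness}) and does not require prismatic input; the hard work is all in building $W$.
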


The proof proceeds by reduction to the case of valuation rings of characteristic $p$, and uses deformation theory.

With the examples of valuation rings and smooth algebras in mind, we study the algebraic $K$\nobreakdash-theory of general $p$\nobreakdash-Cartier smooth algebras over a perfectoid ring. Recall that in certain cases algebraic $K$\nobreakdash-theory is equipped with a motivic filtration with graded pieces $\Z(i)(-) \in \mathcal{D}(\Z)$, for $i\geq 0$, called motivic cohomology. The motivic filtration, with its expected properties, was defined by Voevodsky for smooth varieties over a field. In characteristic $p$ the Beilinson--Lichtenbaum conjecture, proved by Suslin and Voevodsky as a consequence of the Bloch--Kato conjecture \cite{suslin_bloch-kato_1998}, computes the $\l$\nobreakdash-adic part of motivic cohomology in terms of the étale cohomology. More precisely, given a field $k$ of characteristic $p$, a smooth $k$-variety $X$ and a prime $\l$ different from $p$, it states that there is a natural quasi-isomorphism
$$\Z/\l^n(i)(X) \simeq R\Gamma_\emph{Zar}(X,\tau^{\leq i} R\alpha_\ast \mu_{\l^n}^{\otimes i}),$$
where $\mu_{\l^n}$ is the étale sheaf of $\l^n$-roots of unity and $\alpha : (\emph{Sm}/k)_{\emph{ét}} \rightarrow (\emph{Sm}/k)_{\emph{Zar}}$ is the projection of sites.

The Beilinson--Lichtenbaum conjecture can be interpreted as a comparison between Zariski motivic cohomology and étale motivic cohomology. The étale sheaf $\mu_{\l^n}$ is indeed naturally identified with the étale sheafification $\Z/\l^n(i)^\text{ét}$ of the (Zariski) motivic sheaf $\Z/\l^n(i)$, and in fact with the étale sheafification of $K_{2i}(-;\Z/\l^n)$ by rigidity results from \cite{suslin_k-theory_1983,gabber_k-theory_1992}.

In characteristic $p$, to describe the $p$\nobreakdash-adic part of motivic cohomology, and thus the $p$\nobreakdash-adic part of algebraic $K$\nobreakdash-theory, one needs to replace $\mu_{\l^n}$ (which is zero on smooth varieties when $\l=p$) by the logarithmic de Rham--Witt sheaf $W_n\Omega^i_{-,\text{log}}$, see \cite{geisser_k-theory_2000}. This description of $p$\nobreakdash-adic algebraic $K$\nobreakdash-theory in terms of the logarithmic de Rham--Witt sheaf is generalised in \cite{kelly_k-theory_2021} to all Cartier smooth $\F_p$\nobreakdash-algebras.

In mixed-characteristic, the $\l$-adic part of algebraic $K$\nobreakdash-theory can often be computed by Gabber's rigidity theorem \cite{gabber_k-theory_1992}, but much less is known about the $p$\nobreakdash-adic part. The syntomic complexes~$\Z/p^n(i)^\text{syn}$, introduced in \cite[Section $7.4$]{bhatt_topological_2019} and generalised in \cite[Section $8$]{bhatt_absolute_2022}, provide a well-behaved theory of $p$\nobreakdash-adic étale motivic cohomology for arbitrary schemes. On $p$\nobreakdash-henselian $p$\nobreakdash-quasisyntomic rings $S$ (a nonnoetherian generalisation of $p$-henselian local complete intersection rings) the complex $\Z/p^n(i)^\text{syn}(S)$ is concentrated in degrees $[0;i+1]$ by \cite[Theorem $5.1$]{antieau_beilinson_2020}. The top degree~$i+1$ of $\Z/p^n(i)^\text{syn}(S)$ can be computed in terms of the logarithmic de Rham--Witt forms by \cite[Corollary~$5.43$]{antieau_beilinson_2020}, and vanishes étale locally. We provide a description in smaller degrees of $\Z_p(i)^\text{syn}(S)$ in terms of the étale cohomology of the generic fibre of $S$, when $S$ is $p$\nobreakdash-Cartier smooth over a perfectoid ring. Note that Bhatt and Mathew introduce in \cite{bhatt_syntomic_2022} the notion of $F$\nobreakdash-smoothness in terms of absolute prismatic cohomology, and prove the following result for $p$\nobreakdash-torsionfree $F$\nobreakdash-smooth rings. Over a perfectoid base ring, $F$\nobreakdash-smoothness and $p$\nobreakdash-Cartier smoothness coincide by Theorem~\ref{TheoremFsmoothequivalentCSmoverperfectoid}. Also note that Bhatt--Morrow--Scholze \cite{bhatt_topological_2019} proved the following result for smooth $p$-adic formal schemes over mixed-characteristic algebraically closed valuation rings (see Remark~\ref{RemarkcomparisonBMS2smoothoverOC}).

\begin{theoremintro}[Syntomic-étale comparison theorem, see Theorems~\ref{Theoremsyntomicetalecomparison} and \ref{Theoremsyntomicetalecomparison2}]\label{Theoremsyntomicetalecomparisonintro}
    Let $R$ be a perfectoid ring, and $S$ a $p$\nobreakdash-Cartier smooth $R$-algebra. Then for any integers $i\geq 0$ and $n\geq 1$, the map $$\Z/p^n(i)^\emph{syn}(S) \longrightarrow R\Gamma_{\emph{ét}}(\emph{Spec}(S[\tfrac{1}{p}]),\mu_{p^n}^{\otimes i})$$ is an isomorphism on cohomology in degrees $<i-1$, and is injective on $\emph{H}^{i-1}$. If the perfectoid ring~$R$ is $p$\nobreakdash-torsionfree, this map is an isomorphism on cohomology in degrees $<i$, and is injective on $\emph{H}^i$.
\end{theoremintro}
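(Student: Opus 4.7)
The strategy is to realise both sides as equalisers of Frobenius-type operators on prismatic cohomology and to reduce the theorem to a degree bound on a certain cofibre that becomes controllable under $p$\nobreakdash-Cartier smoothness. Let $(A,\mathcal{I})$ denote the perfect prism underlying $R$ and write $\Prism_S := \Prism_{S/A}$. By construction of absolute syntomic cohomology,
$$\Z/p^n(i)^{\emph{syn}}(S) \simeq \emph{fib}\bigl(\phi_i - 1 : \mathcal{N}^{\geq i} \Prism_S\{i\}/p^n \longrightarrow \Prism_S\{i\}/p^n\bigr),$$
with $\phi_i$ the divided Frobenius, while the Bhatt--Scholze primitive comparison theorem identifies
$$R\Gamma_{\emph{ét}}\bigl(\emph{Spec}(S[\tfrac{1}{p}]),\mu_{p^n}^{\otimes i}\bigr) \simeq \emph{fib}\bigl(\phi - 1 : \Prism_S\{i\}[\tfrac{1}{\mathcal{I}}]^\wedge_p/p^n \longrightarrow \Prism_S\{i\}[\tfrac{1}{\mathcal{I}}]^\wedge_p/p^n\bigr).$$
The syntomic--étale comparison map is then induced by the natural inclusion of equalisers, and its fibre is the total fibre of the resulting commuting square of Frobenius operators.

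A diagram chase identifies this total fibre as a Frobenius fibre on the cofibre
$$\mathcal{C}_n(S) := \emph{cofib}\bigl(\mathcal{N}^{\geq i}\Prism_S\{i\}/p^n \longrightarrow \Prism_S\{i\}[\tfrac{1}{\mathcal{I}}]^\wedge_p/p^n\bigr),$$
so it suffices to bound the cohomological amplitude of $\mathcal{C}_n(S)$ by $i-1$ in general, and by $i-2$ when $R$ is $p$\nobreakdash-torsionfree. The cofibre fits in a fibre sequence with Nygaard part $\Prism_S\{i\}/\mathcal{N}^{\geq i}\Prism_S\{i\}$ and localisation part $\Prism_S\{i\}[\tfrac{1}{\mathcal{I}}]^\wedge_p/\Prism_S\{i\}$. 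The Frobenius identification of the Nygaard and conjugate filtrations modulo $p$ expresses each Nygaard graded piece of $\Prism_S$ as a Breuil--Kisin twist of $\Omega^j_{(S/p)/(R/p)}[-j]$. Under $p$\nobreakdash-Cartier smoothness the module $\Omega^j_{(S/p)/(R/p)}$ is flat and placed in cohomological degree zero, so each such piece lies exactly in degree $j$; the Nygaard part therefore has mod-$p$ cohomology in degrees $[0, i-1]$, and devissage on $n$ gives the same bound integrally. A parallel analysis of the localisation part, based on the $\mathcal{I}$-adic filtration and the Hodge--Tate identification of its graded pieces, yields the same cohomological bound. When $R$ is $p$\nobreakdash-torsionfree, the ideal $\mathcal{I}$ is generated by a non-zero-divisor and the prismatic complexes involved are $d$\nobreakdash-torsionfree, which eliminates one layer of derived contribution and improves the bound by one to give the sharper conclusion.

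The main technical obstacle will be executing the diagram chase that passes from the Frobenius-fibre descriptions of both sides to the Frobenius-fibre description of $\mathcal{C}_n(S)$, and carefully tracking the Breuil--Kisin twists on the localisation cofibre whose $\mathcal{I}$-adic filtration is unbounded from below. The decisive role of $p$\nobreakdash-Cartier smoothness is to force all Hodge--Tate graded pieces to be flat and concentrated in a single cohomological degree, so that the finite-length Nygaard filtration together with the single-degree contribution from each graded piece pins the cohomological amplitude of $\mathcal{C}_n(S)$ at exactly $i-1$. Without $p$\nobreakdash-Cartier smoothness, derived Tor contributions would push the cohomology of $\mathcal{C}_n(S)$ well beyond degree $i-1$ and the conclusion would fail.
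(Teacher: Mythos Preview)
Your setup is correct: both sides are Frobenius fixed points on (localised) prismatic cohomology, and the cofibre $R\Phi$ of the comparison map is the fibre of $\phi_i-1$ between the column cofibres $\text{hocofib}(\lambda_i)$ and $\text{hocofib}(\lambda)$. But the reduction ``it suffices to bound the cohomological amplitude of $\mathcal{C}_n(S)$ by $i-1$'' is wrong in both directions. First, even if $\mathcal{C}_n(S)$ and the target of $\phi_i-1$ were both concentrated in degrees $[0,i-1]$, the fibre of $\phi_i-1$ would only be known to lie in degrees $[-1,i-1]$, which says nothing about vanishing in low degrees; what is actually required is that $\phi_i-1$ be an \emph{isomorphism} on cohomology in degrees $\leq i-2$. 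Second, your claimed bound on the localisation part $\text{cofib}\bigl(\Prism_{S/A}\to\Prism_{S/A}[\tfrac{1}{d}]\bigr)$ is false: by the Hodge--Tate comparison its $d$-adic graded pieces involve $\Omega^n_{(S/p)/(R/p)}[-n]$ for \emph{all} $n\geq 0$, so this object has cohomology in every non-negative degree, not just in $[0,i-1]$.

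The paper's argument proceeds quite differently. The decisive input from $p$-Cartier smoothness is Theorem~\ref{TheoremCartiersmoothMain}\,$(\mathcal{N}^{\geq})$: the divided Frobenius $\phi_i:\tau^{\leq i}\mathcal{N}^{\geq i}\Prism_{S/A}\to\tau^{\leq i}\Prism_{S/A}$ is an equivalence. This allows one to form $\phi_i^{-1}$ in degrees $\leq i-1$ and rewrite $\phi_i-1$ as the \emph{endomorphism} $1-\phi_i^{-1}$ of $\text{hocofib}(\lambda)$. The heart of the proof is then to show that $1-\phi_i^{-1}$ is an automorphism in each relevant degree, by proving that $\phi_i^{-1}$ is locally nilpotent on both the $d$-power-torsion kernel and the cokernel of the localisation map. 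This nilpotence is not a formal consequence of any amplitude bound; it comes from the relation $\phi_A(d)=d^p$ in $A/p$, which forces $\phi_i^{-1}$ to send $d^{p^j}$-torsion into $d^{p^{j-1}}$-torsion, together with a delicate analysis (Lemma~\ref{lemmakeyfrobeniuszeroondtorsion}, using $(L\eta)$ again) showing that $\phi_i^{-1}$ is essentially zero on $d$-torsion itself. Your outline does not contain any mechanism that would produce this invertibility, and without it the argument cannot conclude.
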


The proof relies on the comparison between étale cohomology and prismatic cohomology \cite[Theorem~$9.1$]{bhatt_prisms_2022}, and a property for $p$\nobreakdash-Cartier smooth algebras of the Nygaard filtration on relative prismatic cohomology (Theorem~\ref{TheoremCartiersmoothMainintro}\,$(\mathcal{N}^{\geq})$ below). To prove this property, we extend various results on the relative prismatic cohomology of smooth algebras to general $p$\nobreakdash-Cartier smooth algebras. It is a priori surprising that these properties are true assuming only a property on the special fibre (namely, the Cartier isomorphism); we prove that some of them even characterise $p$\nobreakdash-Cartier smoothness.

\begin{theoremintro}[see Theorem~\ref{TheoremCartiersmoothMain}]\label{TheoremCartiersmoothMainintro}
    Let $(A,I)$ be a bounded prism, and $S$ a $p$\nobreakdash-cotangent smooth $A/I$\nobreakdash-alge\-bra. The following are equivalent:
    \begin{description}[align=left]
        \item[$(CSm)$] $S$ is $p$\nobreakdash-Cartier smooth over $A/I$.
        \item[$(\mathbb{L}\Omega=\widehat{\mathbb{L}\Omega})$] The canonical map $(\mathbb{L}\Omega_{S/(A/I)})^\wedge_p \rightarrow (\widehat{\mathbb{L}\Omega}_{S/(A/I)})^\wedge_p$ is an equivalence in the derived category $\mathcal{D}(A/I)$, where $(\widehat{\mathbb{L}\Omega}_{S/(A/I)})^\wedge_p$ is the Hodge-completion of the $p$\nobreakdash-completed derived de Rham complex.
        \item[$(\mathbb{L}\Omega = \Omega)$] The counit map $(\mathbb{L}\Omega_{S/(A/I)})^\wedge_p \rightarrow (\Omega_{S/(A/I)})^\wedge_p$ is an equivalence in the derived category $\mathcal{D}(A/I)$, where $(\mathbb{L}\Omega_{S/(A/I)})^\wedge_p$ is the $p$\nobreakdash-completed derived de Rham complex.
        \item[$(dR)$] The de Rham comparison map $\Prism^{(1)}_{S/A} \otimes_{A}^\mathbb{L} A/I \rightarrow (\Omega_{S/(A/I)})^\wedge_p$ is an equivalence in the derived category $\mathcal{D}(A/I)$.
        \item[$(\Prism=\widehat{\Prism})$] The canonical map $\emph{can} : \Prism^{(1)}_{S/A} \rightarrow \widehat{\Prism}^{(1)}_{S/A}$ is an equivalence in the derived category $\mathcal{D}(A)$, where $\widehat{\Prism}^{(1)}_{S/A}$ is the Nygaard-completion of the prismatic complex.
        \item[$(L\eta)$] The Frobenius map $\tilde{\phi} : \Prism^{(1)}_{S/A} \rightarrow L\eta_I \Prism_{S/A}$ is an equivalence in the derived category $\mathcal{D}(A)$.
        \item[$(\mathcal{N}^{\geq})$] The Frobenius map $\tau^{\leq i}\phi : \tau^{\leq i} \mathcal{N}^{\geq i} \Prism^{(1)}_{S/A} \rightarrow \tau^{\leq i} I^i\Prism_{S/A}$ is an equivalence in the derived category $\mathcal{D}(A)$ for all $i\geq 0$.
    \end{description}
\end{theoremintro}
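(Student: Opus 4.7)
The plan is to organise the seven conditions into two families and bridge them via the de Rham comparison for prismatic cohomology. On the de Rham side sit $(CSm)$, $(\mathbb{L}\Omega = \widehat{\mathbb{L}\Omega})$, $(\mathbb{L}\Omega = \Omega)$ and $(dR)$; on the prismatic side sit $(\Prism = \widehat{\Prism})$, $(L\eta)$ and $(\mathcal{N}^{\geq})$. Throughout, $p$-cotangent smoothness lets us replace $p$-completed derived wedge powers of $\mathbb{L}_{S/(A/I)}$ by ordinary differential forms, and the Cartier isomorphism controls the passage from $\mathbb{L}\Omega$ to $\Omega$.

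I would first establish $(CSm) \Leftrightarrow (\mathbb{L}\Omega = \Omega) \Leftrightarrow (\mathbb{L}\Omega = \widehat{\mathbb{L}\Omega})$. By $p$-cotangent smoothness, each $\wedge^i_S \mathbb{L}_{S/(A/I)}$ is $p$-completely concentrated in degree zero with value $(\Omega^i_{S/(A/I)})^\wedge_p$, so the canonical map $(\widehat{\mathbb{L}\Omega}_{S/(A/I)})^\wedge_p \to (\Omega_{S/(A/I)})^\wedge_p$ is an equivalence on Hodge graded pieces and hence itself an equivalence. It then remains to decide whether $\mathbb{L}\Omega$ is Hodge-complete, and by derived Nakayama this question reduces to one modulo $p$. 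Modulo $p$, the derived de Rham complex carries the conjugate filtration, with graded pieces $\bigl(\wedge^i \mathbb{L}_{(S/p)/(A/I)/p}\bigr)^{(1)}[-i] \simeq \bigl(\Omega^{i,(1)}_{(S/p)/(A/I)/p}\bigr)[-i]$. Under the inverse Cartier map these are identified with $\mathcal{H}^i\bigl(\Omega^{\bullet}_{(S/p)/(A/I)/p}\bigr)[-i]$, that is, with the Postnikov graded pieces of $\Omega/p$. Consequently $(CSm)$ yields $\mathbb{L}\Omega/p \simeq \Omega/p$, which lifts to $(\mathbb{L}\Omega = \Omega)$ in the $p$-complete derived category. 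The converse is analogous: $(\mathbb{L}\Omega = \Omega)$ reduced modulo $p$, combined with the conjugate filtration, recovers the Cartier isomorphism.

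For the prismatic side, the equivalence $(\Prism = \widehat{\Prism}) \Leftrightarrow (L\eta)$ arises from the general identification $\widehat{\Prism}^{(1)}_{S/A} \simeq L\eta_I \Prism_{S/A}$ induced by the divided Frobenius, which holds for any bounded prism. Given it, the Frobenius map $\tilde\phi$ and the completion map $\mathrm{can}$ differ by this equivalence, so one is an equivalence iff the other is. The equivalence $(\mathcal{N}^{\geq}) \Leftrightarrow (L\eta)$ proceeds by comparison of filtrations: passing to the limit over $i$ and using connectivity estimates for $\mathcal{N}^{\geq i}$ and $I^i \Prism_{S/A}$ transforms $(\mathcal{N}^{\geq})$ into $(L\eta)$; conversely, an induction on $i$ using the Hodge--Tate graded pieces (which, under $p$-cotangent smoothness, are concentrated in a single degree and agree with the exterior powers of $\mathbb{L}^\wedge_p$ up to Breuil--Kisin twist) refines $(L\eta)$ to $(\mathcal{N}^{\geq})$.

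To bridge the two families I would invoke the de Rham comparison $\widehat{\Prism}^{(1)}_{S/A} \otimes^{\mathbb{L}}_A A/I \simeq (\widehat{\mathbb{L}\Omega}_{S/(A/I)})^\wedge_p$. Combined with the de Rham equivalences of the second paragraph, this identifies $(\Prism = \widehat{\Prism})$ with $(dR)$, using derived $I$-complete Nakayama to lift a modulo-$I$ equivalence to an integral one. The main obstacle, I expect, is the conjugate-filtration/Cartier step: one has to check that the boundary morphisms of the conjugate filtration on $\mathbb{L}\Omega/p$ assemble to the inverse Cartier operator, which requires a careful comparison of two spectral sequences abutting to $\mathbb{L}\Omega/p$. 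A secondary difficulty is the inductive refinement $(L\eta) \Rightarrow (\mathcal{N}^{\geq})$, where one must keep track of the Frobenius twist on Hodge--Tate graded pieces and use $p$-cotangent smoothness to keep them flat.
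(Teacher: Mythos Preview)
Your plan is correct and matches the paper's proof closely: the paper uses the same conjugate-filtration argument for $(CSm)\Leftrightarrow(\mathbb{L}\Omega=\Omega)$, establishes $(\widehat{\mathbb{L}\Omega})^\wedge_p\simeq(\Omega)^\wedge_p$ and $\widehat{\Prism}^{(1)}_{S/A}\simeq L\eta_I\Prism_{S/A}$ under $p$-cotangent smoothness (your two key bridges), and then packages the remaining equivalences $(\mathbb{L}\Omega=\widehat{\mathbb{L}\Omega})\Leftrightarrow(\mathbb{L}\Omega=\Omega)\Leftrightarrow(dR)\Leftrightarrow(\Prism=\widehat{\Prism})\Leftrightarrow(L\eta)$ into one commutative diagram drawn from \cite{bhatt_absolute_2022} together with derived Nakayama, rather than your two-family split. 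The one place your sketch is thinner than the paper is $(L\eta)\Leftrightarrow(\mathcal{N}^{\geq})$: the paper handles both directions by a cohomology-degree-wise chase through the long exact sequences attached to the Nygaard and $I$-adic filtrations (with the base case of the induction $(L\eta)\Rightarrow(\mathcal{N}^{\geq})$ at $i=0$ resting on the fact that $\text{H}^0(\Prism_{S/A})$ is $I$-torsionfree), rather than a limit-over-$i$ argument.
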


\subsection*{Outline.}

\vspace{-\parindent}
\hspace{\parindent}

In Section~\ref{SectionPrismaticCohomology} we study the relative prismatic cohomology of $p$\nobreakdash-Cartier smooth algebras and prove Theorem~\ref{TheoremCartiersmoothMainintro} (Theorem~\ref{TheoremCartiersmoothMain}). We also provide a comparison between $p$\nobreakdash-Cartier smoothness and $F$\nobreakdash-smooth\-ness over a perfectoid base (subsection \ref{subsectionFsmoothness}). In Section~\ref{sectionvaluationrings} we prove Theorem~\ref{mainthmvaluationringsintro} on the $p$\nobreakdash-Cartier smoothness of certain valuation ring extensions (Theorem~\ref{mainthmvaluationrings}). In Section~\ref{Sectionsyntomicetalecomparison} we prove the syntomic-étale comparison Theorem~\ref{Theoremsyntomicetalecomparisonintro} (Theorems~\ref{Theoremsyntomicetalecomparison} and \ref{Theoremsyntomicetalecomparison2}), applying the results of Section~\ref{SectionPrismaticCohomology} to the case of a perfectoid base.

\subsection*{Notation.}

\vspace{-\parindent}
\hspace{\parindent}

Given a commutative ring $R$, denote by $\mathcal{D}(R)$ the derived $\infty$-category of $R$-modules, in which we use the cohomological convention for the indices. Also denote by $\mathcal{DF}(R) := \text{Fun}((\Z,\geq)^\text{op},\mathcal{D}(R))$ the filtered derived $\infty$-category of $R$-modules, where the filtrations are decreasing. Given a filtered complex $\text{Fil}^\star\, C \in \mathcal{DF}(R)$ and for each integer $n \in \Z$, let $\text{gr}^n\, C \in \mathcal{D}(R)$ be the homotopy cofibre of the transition map $\text{Fil}^{n+1}\,C \rightarrow \text{Fil}^n\,C$. A filtered complex $\text{Fil}^\star\, C \in \mathcal{DF}(R)$ is said to be complete if the limit $\text{lim}_n\, \text{Fil}^n\, C$ vanishes in the derived $\infty$-category $\mathcal{D}(R)$. For a nonzerodivisor $d$ in $R$, the (derived) reduction $C/d$ modulo $d$ of a complex $C \in \mathcal{D}(R)$ is the homotopy cofibre of the map $d : C \rightarrow C$ induced by multiplication by $d$, {\it i.e.}, $C/d := C \otimes_R^{\mathbb{L}} R/d$. If $C$ is given by an $R$-module $M$ in degree~$0$, we also call derived reduction of $M$ modulo $d$ the complex $M \otimes_R^{\mathbb{L}} R/d$; it is given by $M/d$ (the classical reduction of $M$ modulo $d$) in degree $0$ and by $M[d]$ (the $d$\nobreakdash-torsion of $M$) in degree $-1$. The (derived) $d$\nobreakdash-adic completion of a complex $C \in \mathcal{D}(R)$ is the derived limit of the complexes $C/d^n$ over $n\geq 1$. An $R$-module $M$ is said to have bounded $d^\infty$-torsion if there exists an integer $n\geq 1$ such that $M[d^m]=M[d^n]$ for all $m \geq n$; this assumption guarantees that the derived $d$\nobreakdash-adic completion of $M$ is in degree $0$, given by the classical $d$\nobreakdash-adic completion of $M$. Given a complex $C \in \mathcal{D}(R)$ and some integers $a,b \in \Z$, $a \leq b$, there is a naturally associated complex $\tau^{[a;b]} C \in \mathcal{D}(R)$ which is concentrated in degrees $[a;b]$ and with the same cohomology groups as $C$ in degrees $[a;b]$. A complex $C \in \mathcal{D}(R)$ is said to have Tor-amplitude in degrees $[a;b]$ if for any $R$-module $N$, the complex $C \otimes_R^\mathbb{L} N$ is in degrees~$[a;b]$. Similarly, $C$ is said to have $d$\nobreakdash-complete Tor-amplitude in degrees $[a;b]$ if $C \otimes_R^\mathbb{L} R/d \in \mathcal{D}(R/d)$ has Tor-amplitude in degrees~$[a;b]$. Given an ideal $I \subset R$, a complex $C \in \mathcal{D}(R)$ is $I$-adically complete if it is $d$\nobreakdash-adically complete for every element $d \in I$.

Given a commutative ring $R$, an $\infty$-category $\mathcal{D}$ which admits sifted colimits ({\it e.g.}, $\mathcal{D}(R)$ or $\mathcal{DF}(R)$), and a functor $$\mathcal{F} : \text{Sm}_R := \{\text{smooth }R\text{-algebras}\} \longrightarrow \mathcal{D},$$ define
$$\begin{array}{ll} \mathbb{L}\mathcal{F} : &R\text{-Alg} \longrightarrow \mathcal{D} \\ &S \longmapsto \underset{P\rightarrow S}{\text{colim}} \text{ } \mathcal{F}(P),\end{array}$$
where the colimit is taken over all free $R$-algebras $P$ mapping to $S$. The functor $\mathbb{L}\mathcal{F}$ is called the left Kan extension (from free $R$-algebras) of $\mathcal{F}$. For instance, the cotangent complex $\mathbb{L}_{-/R} := \mathbb{L}\Omega^1_{-/R}$ is the left Kan extension of the module of Kähler differentials $\Omega^1_{-/R}$, and the derived de Rham complex $\mathbb{L}\Omega_{-/R}$ is the left Kan extension of the de Rham complex $\Omega_{-/R}$. The Hodge filtration on the de Rham complex is given for each $n \in \Z$ by $\text{Fil}_\text{Hod}^n\, \Omega_{-/R} := \Omega_{-/R}^{\geq n}$; the Hodge filtration on the derived de Rham complex is defined as the left Kan extension of this filtration, and is not always complete.

\subsection*{Acknowledgments.}

\vspace{-\parindent}
\hspace{\parindent}

I am very grateful to Matthew Morrow for suggesting this project to me, sharing many insights and for careful readings of this manuscript. I would also like to thank Elden Elmanto and Arnab Kundu for helful discussions, Javier Fresán, Akhil Mathew and Mohamed Moakher for comments on a draft of this paper, and the referee for many helpful comments and corrections. This project has received funding from the European Research Council (ERC) under the European Union’s Horizon 2020 research and innovation programme (grant agreement \hbox{No. 101001474}).

\section{Prismatic cohomology of Cartier smooth algebras}\label{SectionPrismaticCohomology}

\vspace{-\parindent}
\hspace{\parindent}

Let $p$ be a prime number. In this section we introduce the notion of $p$\nobreakdash-Cartier smoothness for morphisms of commutative rings (Definition~\ref{DefinitionCartiersmooth}), and give several characterisations in terms of prismatic cohomology (Theorem~\ref{TheoremCartiersmoothMain}). 

The $p$\nobreakdash-Cartier smooth morphisms generalise smooth morphisms, and behave as if they were smooth from the point of view of $p$\nobreakdash-adic étale motivic cohomology. More precisely, the main interest of this article is in syntomic cohomology (see Section~\ref{Sectionsyntomicetalecomparison}), which provides a general theory of $p$\nobreakdash-adic étale motivic cohomology. Syntomic cohomology can be defined in terms of Frobenius eigenspaces of prismatic cohomology, and this section is devoted to the prismatic cohomology of $p$\nobreakdash-Cartier smooth algebras. Our main result is Theorem~\ref{TheoremCartiersmoothMain}. We also give a comparison with the notion of $F$\nobreakdash-smoothness introduced in \cite{bhatt_syntomic_2022} (see subsection \ref{subsectionFsmoothness}).

\needspace{4\baselineskip}

\subsection{Definitions}

\vspace{-\parindent}
\hspace{\parindent}

For a commutative ring $R$, the cotangent complex $$\mathbb{L}_{-/R} : R\text{-Alg} \longrightarrow \mathcal{D}(R)$$
is the natural derived version of the module of Kähler differentials $$\Omega^1_{-/R} : R\text{-Alg} \longrightarrow R\text{-Mod}.$$
It controls information on the relative prismatic complex via the Hodge--Tate comparison theorem \cite[Theorem~$4.11$]{bhatt_prisms_2022}. The condition that $S \otimes_R^\mathbb{L} R/p \in \mathcal{D}(R/p)$ is in degree $0$ and base change for the cotangent complex imply that the morphism $\mathbb{L}_{S/R} \otimes_R^\mathbb{L}R/p \rightarrow \mathbb{L}_{(S/p)/(R/p)}$ is an equivalence. This allows one to lift properties of the cotangent complex of $R/p \rightarrow S/p$ to similar properties on the $p$\nobreakdash-completed cotangent complex of $R \rightarrow S$.

\begin{definition}[$p$-discreteness]
    A morphism $R\rightarrow S$ of commutative rings is \emph{$p$\nobreakdash-discrete} if the derived tensor product $S\otimes^\mathbb{L}_R R/p \in\mathcal{D}(R/p)$ is concentrated in degree $0$, where it is given by $S/p$.
\end{definition}

\begin{examples}\label{examplepdiscrete}
    A morphism $R \rightarrow S$ of commutative rings is $p$-discrete in the following cases.
    \begin{enumerate}
        \item $R$ and $S$ are $\F_p$-algebras.
        \item The morphism $R \rightarrow S$ is flat.
        \item $R$ and $S$ are $p$-torsionfree. More precisely, if $R$ is $p$-torsionfree, then the morphism $R \rightarrow S$ is $p$-discrete if and only if $S$ is $p$-torsionfree.
    \end{enumerate}
\end{examples}

The following definition axiomatises some properties satisfied by the cotangent complex in the smooth case.

\begin{definition}[Cotangent smoothness, \cite{bhatt_topological_2019}]\label{Definitionquasismooth}
    \begin{enumerate}
        \item A morphism $R\rightarrow S$ of commutative rings is \emph{cotangent smooth}, or $S$ is a \emph{cotangent smooth} $R$-algebra, if the cotangent complex $\mathbb{L}_{S/R}$ has Tor-amplitude in $[0;0]$, i.e., $\Omega^1_{S/R}$ is a flat $S$-module and $\emph{H}_n(\mathbb{L}_{S/R}) = 0$ for all $n>0$.
        \item A morphism $R\rightarrow S$ of commutative rings is \emph{$p$\nobreakdash-cotangent smooth}, or $S$ is a \emph{$p$\nobreakdash-cotangent smooth} $R$-algebra, if it is $p$-discrete and its reduction $R/p\rightarrow S/p$ modulo $p$ is cotangent smooth.\footnote{More generally, a morphism $R\rightarrow S$ of commutative rings, possibly with bounded $p^\infty$-torsion, will be called $p$\nobreakdash-something if it is $p$\nobreakdash-discrete and if its reduction $R/p\rightarrow S/p$ modulo $p$ is something. We distinguish this from similar notions on $p$\nobreakdash-complete rings, often called $p$\nobreakdash-completely something.}
    \end{enumerate}
\end{definition}

\begin{examples}\label{Examplequasismoothness}
    \begin{enumerate}
        \item A smooth morphism of commutative rings is cotangent smooth, because its cotangent complex is in degree $0$, where it is given by the locally free module of differential forms.
        \item A morphism of perfect $\F_p$\nobreakdash-algebras is cotangent smooth, because its cotangent complex is zero. More generally, a $p$\nobreakdash-discrete morphism $R \rightarrow S$ of perfectoid rings is $p$-cotangent smooth. Indeed, the cotangent complex $\mathbb{L}_{(S/p)/(R/p)}$ is zero, as a base change of the cotangent complex $\mathbb{L}_{S^{\flat}/R^{\flat}}$ of the morphism $R^\flat \rightarrow S^{\flat}$ of perfect $\F_p$\nobreakdash-algebras. In particular, a morphism of $p$-torsionfree perfectoid rings is $p$-cotangent smooth.
        \item A filtered colimit of ($p$\nobreakdash-)cotangent smooth algebras is ($p$\nobreakdash-)cotangent smooth, because the cotangent complex commutes with filtered colimits.
        \item Let $C^+$ be a valuation ring with perfect fraction field, and $E^+$ a valuation ring extension of~$C^+$ (see section \ref{sectionvaluationrings}). The morphism $C^+ \rightarrow E^+$ has cotangent complex concentrated in degree $0$ (\cite[Theorem~$6.5.8\,(ii)$]{gabber_almost_2003}). If $C^+$ contains a perfect field, then the morphism $C^+ \rightarrow E^+$ is cotangent smooth (\cite[Corollary~$6.5.21$]{gabber_almost_2003}). If the $p$\nobreakdash-adic completion of $C^+$ is a perfectoid ring, then the morphism $C^+ \rightarrow E^+$ is $p$\nobreakdash-cotangent smooth (Proposition~\ref{Propositionquasismoothnessvaluationrings} below).
    \end{enumerate}
\end{examples}

Several properties of prismatic cohomology in the smooth case --such as the comparison between prismatic cohomology and de Rham cohomology \cite[Corollary~$15.4$]{bhatt_prisms_2022}-- do not hold for general $p$\nobreakdash-cotangent smooth morphisms $R\rightarrow S$. We will prove (Theorem~\ref{TheoremCartiersmoothMain}) that the obstruction to satisfy these properties vanishes exactly when the morphism $R/p \rightarrow S/p$ satisfies the Cartier isomorphism. For an $\F_p$-algebra $R$, we denote by $\phi_R$ its Frobenius endomorphism.

\begin{definition}[Cartier smoothness]\label{DefinitionCartiersmooth}
    \begin{enumerate}
        \item A morphism $R\rightarrow S$ of $\F_p$-algebras is \emph{Cartier smooth}, or $S$ is a \emph{Cartier smooth} $R$-algebra, if it is cotangent smooth and if the inverse Cartier map
        $$C^{-1} : \Omega^n_{S/R} \otimes_{R,\phi_R} R \longrightarrow \emph{H}^n(\Omega^{\bullet}_{S/R})$$ is an isomorphism of $R$-modules for each $n\geq 0$.
        \item A morphism $R\rightarrow S$ of commutative rings is \emph{$p$\nobreakdash-Cartier smooth}, or $S$ is a \emph{$p$\nobreakdash-Cartier smooth} $R$\nobreakdash-algebra, if it is $p$\nobreakdash-discrete and its reduction $R/p\rightarrow S/p$ modulo $p$ is Cartier smooth.
    \end{enumerate}
\end{definition}

\begin{remark}
    A morphism $R \rightarrow S$ of commutative rings is then $p$-Cartier smooth (resp. $p$\nobreakdash-cotangent smooth) if and only if its $p$\nobreakdash-adic completion $R^\wedge_p \rightarrow S^\wedge_p$ is $p$-Cartier smooth (resp. $p$\nobreakdash-cotangent smooth).
\end{remark}

Equivalently, a morphism $R \rightarrow S$ of commutative rings is $p$\nobreakdash-Cartier smooth if it is $p$\nobreakdash-cotangent smooth and if the morphism $R/p \rightarrow S/p$ satisfies the Cartier isomorphism. For a morphism $R\rightarrow S$ of $\F_p$-algebras, recall that the inverse Cartier map $$C^{-1} : \Omega^n_{S/R} \otimes_{R,\phi_R} R \longrightarrow \text{H}^n(\Omega^{\bullet}_{S/R}),$$ for any integer $n\geq 0$, is defined as the unique $R$-linear map satisfying $$C^{-1}(fdg_1\wedge\dots\wedge dg_n \otimes 1)= f^pg_1^{p-1}\dots g_n^{p-1}dg_1\wedge\dots\wedge dg_n.$$ Cartier smoothness when $R=\F_p$ was already studied in \cite{kelly_k-theory_2021}.

\begin{examples} A morphism $R\rightarrow S$ of commutative rings is $p$\nobreakdash-Cartier smooth in the following cases.
	\begin{enumerate}
		\item The morphism $R\rightarrow S$ is smooth. Indeed, $S$ is a flat $R$-algebra and the cotangent complex $\mathbb{L}_{S/R}$ is a flat $S$-module in degree $0$, so the morphism $R\rightarrow S$ is $p$\nobreakdash-cotangent smooth. The Cartier isomorphism is a standard property of smooth morphisms in characteristic $p$ (see for instance \cite[Theorem~$1.2$]{deligne_relevements_1987}).
		\item $S$ is a filtered colimit of smooth $R$-algebras. Indeed, filtered colimits of $p$\nobreakdash-cotangent smooth algebras are $p$\nobreakdash-cotangent smooth, and the inverse Cartier map commutes with filtered colimits.
		\item $R=\F_p$ and $S$ is a valuation ring. The cotangent smoothness is a result of Gabber--Ramero (Example~\ref{Examplequasismoothness}\,$(4)$). The Cartier isomorphism is a result of Gabber (\cite[Corollary~$A.4$]{kerz_towards_2021}).
		\item $R \rightarrow S$ is a $p$\nobreakdash-discrete morphism of perfectoid rings. The cotangent complex of the morphism $R/p \rightarrow S/p$ is zero (Example~\ref{Examplequasismoothness}\,$(2)$), so the Cartier isomorphism is trivial for $n\geq 1$. If $R$ and $S$ are perfect $\F_p$-algebras and $n=0$ the Cartier isomorphism holds; in general the inverse Cartier map for $n=0$ is the reduction modulo $p^\flat$ of the inverse Cartier map associated to the morphism $R^\flat \rightarrow S^\flat$ of perfect $\F_p$-algebras, and is thus an isomorphism.
		\item $R$ is a valuation ring whose $p$\nobreakdash-adic completion is a perfectoid ring, and $S$ is a valuation ring extension of $R$ (Theorem~\ref{mainthmvaluationrings} below).
	\end{enumerate}
\end{examples}

\begin{example}
    An imperfect $\F_p$-algebra $S$ whose cotangent complex $\mathbb{L}_{S/\F_p}$ is trivial, is cotangent smooth but not Cartier smooth. Indeed, it is cotangent smooth because any algebra over the field $\F_p$ is flat and because the zero complex of $S$-modules has Tor-amplitude in $[0;0]$. In degree $0$, the inverse Cartier map for the map $\F_p \rightarrow S$ is the Frobenius map $S \rightarrow S$, and is thus not an isomorphism; so the $\F_p$-algebra $S$ is not Cartier smooth. Remark that imperfect $\F_p$-algebras with trivial cotangent complex exist, by \cite{bhatt_imperfect_2013}.
\end{example}

\begin{lemma}[Base change for Cartier smoothness]\label{LemmabasechangeforCartiersmoothness}
    Let $R \rightarrow S$ be a $p$\nobreakdash-Cartier smooth morphism, and $R'$ an $R$-algebra. Then the morphism $R'\rightarrow S \otimes_R R'$ is $p$\nobreakdash-Cartier smooth.
\end{lemma}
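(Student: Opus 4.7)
Set $S' := S \otimes_R R'$. The plan is to verify directly the two conditions in the definition of $p$-Cartier smoothness for the morphism $R' \to S'$: namely, that it is $p$-discrete, and that its reduction $R'/p \to S'/p$ modulo $p$ is Cartier smooth.

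The key observation is the identification
\[
S'/p \;=\; (S \otimes_R R')/p \;\cong\; (S/p) \otimes_{R/p} (R'/p),
\]
which presents $R'/p \to S'/p$ as the base change of the Cartier smooth morphism $R/p \to S/p$ along $R/p \to R'/p$. Thus the Cartier-smoothness assertion reduces to the general claim that Cartier smoothness of morphisms of $\F_p$-algebras is preserved under arbitrary base change. For cotangent smoothness this is immediate from the transitivity of the cotangent complex, $\mathbb{L}_{B'/A'} \simeq \mathbb{L}_{B/A} \otimes_B^{\mathbb{L}} B'$, together with the fact that Tor-amplitude in $[0;0]$ is stable under derived base change. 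For the Cartier isomorphism, I would combine the naturality of the inverse Cartier map $C^{-1}$ with the base-change identifications $\Omega^n_{B'/A'} \cong \Omega^n_{B/A} \otimes_A A'$ and the compatibility of the Frobenius twist with the map $A \to A'$ (valid since both sit in characteristic $p$); the cotangent-smoothness hypothesis on $A \to B$ ensures that the terms of the de Rham complex $\Omega^{\bullet}_{B/A}$ are flat enough for formation of cohomology to commute with the relevant tensor product with $A'$ over $A$.

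For $p$-discreteness of $R' \to S'$, I would compute the derived reduction $(S \otimes_R R') \otimes_{R'}^{\mathbb{L}} R'/p$. Using the projection-type identity $(S \otimes_R R') \otimes_{R'} M \cong S \otimes_R M$ applied term-by-term to a free $R'$-resolution of $R'/p$, this derived tensor product is re-expressed as $S \otimes_R^{\mathbb{L}}(R'/p)$, at which point the $p$-discreteness of $R \to S$ together with the flatness consequences of the Cartier smoothness of $R/p \to S/p$ forces the result to be concentrated in degree $0$.

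The main technical obstacle is the interplay between derived and classical tensor products: since $R \to S$ need not be flat in general, some care is required to ensure that the base change of the de Rham complex and the various $p$-adic reductions really behave discretely. Once these flatness consequences of cotangent smoothness have been unpacked, the argument reduces to two standard pieces of functoriality --- base change for the cotangent complex and naturality of the inverse Cartier map --- together with the identification $S'/p \cong (S/p) \otimes_{R/p} (R'/p)$ displayed above.
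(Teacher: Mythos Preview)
Your overall strategy matches the paper's: reduce to characteristic $p$ via the identification $S'/p \cong (S/p)\otimes_{R/p}(R'/p)$, and then verify that Cartier smoothness of $\F_p$-algebra morphisms is stable under base change using base change for the cotangent complex and for differential forms. The Cartier-isomorphism portion of your sketch is essentially the paper's argument (the paper makes the same appeal to flatness of the terms $\Omega^n_{S/R}$ over the base to commute cohomology with $-\otimes_R R'$).

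There is, however, a gap in your $p$-discreteness argument. You claim that applying the identity $(S\otimes_R R')\otimes_{R'} M \cong S\otimes_R M$ termwise to a free $R'$-resolution $P_\bullet \to R'/p$ re-expresses $(S\otimes_R R')\otimes_{R'}^{\mathbb{L}} R'/p$ as $S\otimes_R^{\mathbb{L}}(R'/p)$. The complex $S\otimes_R P_\bullet$ does compute $(S\otimes_R R')\otimes_{R'}^{\mathbb{L}} R'/p$, but it computes $S\otimes_R^{\mathbb{L}} R'/p$ only if each $P_i$ is flat over $R$; since the $P_i$ are free over $R'$, this would require $R'$ to be flat over $R$, which is not assumed. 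Even granting your identification, the next step does not follow: by $p$-discreteness of $R\to S$ one has $S\otimes_R^{\mathbb{L}} R'/p \simeq (S/p)\otimes_{R/p}^{\mathbb{L}} R'/p$, and forcing this into degree $0$ would require $S/p$ to be flat over $R/p$. Cartier smoothness only asserts that $\Omega^1_{(S/p)/(R/p)}$ is flat over $S/p$, not that $S/p$ is flat over $R/p$; for instance $\F_p[x^{1/p^\infty}]\to\F_p$ is Cartier smooth but not flat. The paper does not spell out this verification either, simply asserting that $p$-cotangent smoothness is preserved by base change, but the justification you propose does not go through as written.
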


\begin{proof}
    The $p$\nobreakdash-cotangent smoothness is preserved by base change. The Cartier isomorphism depends only on the reduction modulo $p$ of the morphism $R' \rightarrow S \otimes_R R'$, so we can assume that $R$, $R'$ and $S$ are $\F_p$\nobreakdash-algebras. Let $n \geq 0$ be an integer. There is a canonical isomorphism of $R'$-modules $$\Omega^n_{(S \otimes_R R')/R'} \cong \Omega^n_{S/R} \otimes_R R'.$$ By Cartier smoothness of the morphism $R \rightarrow S$, the $R$-module $\text{H}^n(\Omega^\bullet_{S/R})$ is flat, so there is a canonical isomorphism of $R'$-modules $$\text{H}^n(\Omega^\bullet_{S/R}) \otimes_R R' \cong \text{H}^n(\Omega^\bullet_{S/R} \otimes^\mathbb{L}_R R').$$
    Moreover, $\Omega^\bullet_{S/R}$ is a complex of flat $R$-modules by cotangent smoothness of the morphism $R\rightarrow S$, so there is a canonical isomorphism of $R'$-modules $$\text{H}^n(\Omega^\bullet_{S/R} \otimes^\mathbb{L}_R R') \cong \text{H}^n(\Omega^\bullet_{(S\otimes_R R')/R'})$$
    and the morphism $R' \rightarrow S \otimes_R R'$ satisfies the Cartier isomorphism.
\end{proof}

\begin{lemma}[Transitivity of Cartier smoothness]\label{lemmatransitivityCartiersmoothness}
    Let $R \rightarrow S$ and $S \rightarrow T$ be $p$-Cartier smooth morphisms. Then the composite $R \rightarrow T$ is also $p$-Cartier smooth.
\end{lemma}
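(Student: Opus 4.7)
The plan is to reduce to the characteristic $p$ case and then run the classical filtration/spectral sequence argument that establishes transitivity of the Cartier isomorphism along a tower of cotangent smooth maps of $\F_p$-algebras.

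First, $p$-discreteness composes: tensoring the equivalences $S \otimes^{\mathbb{L}}_R R/p \simeq S/p$ and $T \otimes^{\mathbb{L}}_S S/p \simeq T/p$ yields $T \otimes^{\mathbb{L}}_R R/p \simeq T/p$. By Definition~\ref{DefinitionCartiersmooth} it therefore suffices to show that if $R \to S$ and $S \to T$ are Cartier smooth morphisms of $\F_p$-algebras, then so is $R \to T$. Transitivity of cotangent smoothness is immediate from the fundamental fibre sequence $\mathbb{L}_{S/R} \otimes^{\mathbb{L}}_S T \to \mathbb{L}_{T/R} \to \mathbb{L}_{T/S}$: combined with flatness of $\Omega^1_{S/R}$ over $S$ and of $\Omega^1_{T/S}$ over $T$, it forces $\mathbb{L}_{T/R}$ to be concentrated in degree zero and to fit in a short exact sequence
$$0 \longrightarrow \Omega^1_{S/R} \otimes_S T \longrightarrow \Omega^1_{T/R} \longrightarrow \Omega^1_{T/S} \longrightarrow 0$$
of flat $T$-modules, with flat middle term.

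For the Cartier isomorphism, I will use the multiplicative decreasing filtration $F^\bullet$ on $\Omega^\bullet_{T/R}$ induced by the short exact sequence above (forms with at least $i$ wedge factors pulled back from $\Omega^1_{S/R}$); its graded pieces are $\Omega^i_{S/R} \otimes_S \Omega^{n-i}_{T/S}$, it is compatible with the differential, and each graded piece is a flat $T$-module. Using flatness of $\Omega^i_{S/R}$ over $S$ and Cartier smoothness of $S \to T$, the $E_1$-page of the resulting spectral sequence identifies with $\Omega^i_{S/R} \otimes_S \phi^*_S \Omega^j_{T/S}$, and the $d_1$-differential is, as in the classical smooth case, induced by $d_{S/R} \otimes \operatorname{id}$. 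Cartier smoothness of $R \to S$ then gives $E_2^{i,j} \cong \phi_R^*\Omega^i_{S/R} \otimes_S \phi_S^* \Omega^j_{T/S}$ and degeneration. The Frobenius twist of the filtration $F^\bullet$ identifies $\bigoplus_{i+j=n} E_\infty^{i,j}$ canonically with $\phi_R^* \Omega^n_{T/R}$, and the inverse Cartier map $C^{-1}$ for $R \to T$ respects both filtrations and induces on graded pieces the tensor product of the Cartier isomorphisms for the two given maps; it is therefore an isomorphism by the five lemma.

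The main obstacle is the explicit identification of the $d_1$-differential together with the compatibility of $C^{-1}$ with the filtration and its Frobenius twist; these steps parallel the classical smooth case but deserve care since neither $R \to S$ nor $S \to T$ need be flat, only cotangent smooth. A cleaner alternative, if available, would be to first establish that Cartier smoothness of a cotangent smooth morphism of $\F_p$-algebras is equivalent to the natural map $\mathbb{L}\Omega_{S/R} \to \Omega_{S/R}$ being a quasi-isomorphism, and then deduce transitivity directly from the transitivity fibre sequence for the derived de Rham complex together with a Künneth-type argument.
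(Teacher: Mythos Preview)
Your suggested alternative at the end—reformulate Cartier smoothness of a cotangent smooth morphism of $\F_p$-algebras as the counit $\mathbb{L}\Omega_{S/R}\to\Omega_{S/R}$ being an equivalence, and then invoke transitivity for derived de Rham—is precisely the paper's proof. The paper cites \cite[Proposition~3.22]{bhatt_p-adic_2012} for the transitivity of $\mathbb{L}\Omega$; the reformulation $(CSm)\Leftrightarrow(\mathbb{L}\Omega=\Omega)$ is later stated as part of Theorem~\ref{TheoremCartiersmoothMain}, but its proof (via the conjugate filtration on $\mathbb{L}\Omega$) is independent of this lemma, so there is no circularity.

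Your main approach via the Katz--Oda filtration is the classical one in the smooth case, and the reduction to characteristic $p$, the cotangent-smoothness step, and the setup of the filtration are all fine. The genuine gap is the clause ``and degeneration''. You compute $E_2^{i,j}\cong\phi_R^\ast\Omega^i_{S/R}\otimes\phi_S^\ast\Omega^j_{T/S}$, but you give no argument for $d_r=0$ when $r\ge 2$, and without degeneration the five-lemma step fails: the associated graded of $H^n(\Omega^\bullet_{T/R})$ is $E_\infty$, not $E_2$, and $C^{-1}$ being an isomorphism onto $E_2$ on graded pieces does not by itself force $E_2=E_\infty$. In the smooth case degeneration is usually deduced from the \emph{formality} of $F_{S/R,\ast}\Omega^\bullet_{S/R}$ as a complex of $S^{(1)}$-modules (Cartier's theorem), which lets one tensor with any $S^{(1)}$-module and still read off cohomology; but this formality statement is exactly the derived Cartier isomorphism, i.e.\ your alternative. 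Similarly, your identification of $d_1$ as ``$d_{S/R}\otimes\mathrm{id}$'' hides the horizontality of the Cartier isomorphism with respect to the Gauss--Manin connection, which again is most cleanly packaged in the derived picture. So the spectral-sequence route is not wrong, but the missing steps, once supplied, collapse it into the derived de Rham argument the paper uses.
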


\begin{proof}
    The $p$\nobreakdash-cotangent smoothness and the Cartier isomorphism for $n=1$ follow from the transitivity sequence for the cotangent complex. The Cartier isomorphism follows in general from the transitivity property of the derived de Rham complex (\cite[Proposition~$3.22$]{bhatt_p-adic_2012}).
\end{proof}

\begin{corollary}\label{CorollaryCartiersmoothtensorproduct}
    Let $R \rightarrow S$ and $R \rightarrow S'$ be $p$\nobreakdash-Cartier smooth morphisms. Then the morphism $R \rightarrow S \otimes_R S'$ is also $p$\nobreakdash-Cartier smooth.
\end{corollary}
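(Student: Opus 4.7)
The plan is to deduce this corollary directly from the two preceding lemmas, via the standard factorisation of the tensor product as a composition of a base change followed by the original morphism.

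First, I would apply Lemma~\ref{LemmabasechangeforCartiersmoothness} to the $p$-Cartier smooth morphism $R \to S'$, taking $S$ (viewed as an $R$-algebra via the given map $R \to S$) as the base $R'$. This immediately yields that the induced morphism $S \to S' \otimes_R S = S \otimes_R S'$ is $p$-Cartier smooth.

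Next, I would combine this with the hypothesis that $R \to S$ is $p$-Cartier smooth: by Lemma~\ref{lemmatransitivityCartiersmoothness}, the composite
$$R \longrightarrow S \longrightarrow S \otimes_R S'$$
is then $p$-Cartier smooth, which is exactly what we want to prove.

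Since the argument is purely a formal composition of the two previous lemmas, there is no real obstacle to overcome: the work is entirely absorbed into the proofs of base change and transitivity. The only point worth noting is that the relevant tensor product is automatically the underived one here, since both factors $R \to S$ and $R \to S'$ are $p$-discrete, so no compatibility issues arise between the hypotheses and the conclusion.
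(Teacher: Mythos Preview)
Your proof is correct and follows essentially the same approach as the paper: base-change one of the two morphisms along the other using Lemma~\ref{LemmabasechangeforCartiersmoothness}, then compose using Lemma~\ref{lemmatransitivityCartiersmoothness}. Your closing remark about $p$-discreteness is unnecessary (and not quite the right justification), since Lemma~\ref{LemmabasechangeforCartiersmoothness} is already stated for the ordinary tensor product.
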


\begin{proof}
    The morphism $S \rightarrow S \otimes_R S'$ is $p$\nobreakdash-Cartier smooth by Lemma~\ref{LemmabasechangeforCartiersmoothness}. The morphism $R \rightarrow S \otimes_R S'$ is then $p$-Cartier smooth, as the composite of two $p$-Cartier smooth morphisms (Lemma~\ref{lemmatransitivityCartiersmoothness}).
\end{proof}

\subsection{Review of relative prismatic cohomology}\label{subsectionreviewrelativeprismaticcohomology}

\vspace{-\parindent}
\hspace{\parindent}

Prisms are defined in \cite{bhatt_prisms_2022} as pairs $(A,I)$, where $A$ is a $\delta$-ring (roughly,\footnote{More precisely, a $\delta$-ring is a $\Z_{(p)}$-algebra $A$ equipped with a map $\delta_A : A \rightarrow A$ such that $\phi_A : A \rightarrow A, x \mapsto x^p + p \delta_A(x)$ is a ring homomorphism. If $A$ is a $p$\nobreakdash-torsionfree $\Z_{(p)}$-algebra, a $\delta$-ring structure on $A$ is the same as a ring endomorphism lifting the Frobenius on $A/p$. If $A$ is a general $\Z_{(p)}$-algebra, a $\delta$-ring structure on $A$ is the same as a lift of Frobenius in the derived sense (\cite[Remark~$2.5$]{bhatt_prisms_2022}).} a $\Z_{(p)}$-algebra with a lift of Frobenius $\phi_A : A \rightarrow A$) and $I \subset A$ is an ideal defining a Cartier divisor in $\text{Spec}(A)$, such that $A$ is derived $(p,I)$-complete and $p \in I + \phi_A(I)A$. In all the cases of interest the ideal $I \subset A$ will be a principal ideal generated by a nonzerodivisor. A prism $(A,I)$ is called bounded if the ring~$A/I$ has bounded~$p^\infty$-torsion. Restricting to bounded prisms avoids complications involving derived completions: for instance, the underlying ring $A$ of a bounded prism $(A,I)$ is $(p,I)$-complete in the classical sense.

Following \cite[Section $4$]{bhatt_topological_2019} or \cite[Appendix C]{bhatt_absolute_2022}, a morphism $R \rightarrow S$ of commutative rings is {\it $p$\nobreakdash-quasisyntomic} if it is $p$\nobreakdash-flat and $\mathbb{L}_{(S/p)/(R/p)}$ has Tor-amplitude in $[-1;0]$. For instance, any noetherian local complete intersection ring $S$ is $p$\nobreakdash-quasisyntomic over $\Z$ by \cite[Theorem~$1.2$]{avramov_locally_1999}.

Given a bounded prism $(A,I)$ and a $p$\nobreakdash-quasisyntomic $A/I$-algebra $S$, \cite[Section $4.1$]{bhatt_prisms_2022} defines the prismatic site $(S/A)_\Prism$ as the site having as objects the prisms $(B,IB)$ over $(A,I)$ with a map $S \rightarrow B/IB$, and covers given by flat covers. The relative prismatic complex $\Prism_{S/A} \in \mathcal{D}(A)$ is defined as the cohomology of the sheaf $$\mathcal{O}_\Prism : (S/A)_\Prism \longrightarrow A\text{-Alg}, \text{ }(B,IB) \longmapsto B.$$ Similarly, the Hodge--Tate complex $\overline{\Prism}_{S/A} \in \mathcal{D}(A/I)$ is defined as the cohomology of the sheaf $$\overline{\mathcal{O}}_\Prism : (S/A)_\Prism \longrightarrow A/I\text{-Alg}, \text{ }(B,IB) \longmapsto B/IB.$$ For instance, if $k$ is a perfect field of characteristic $p$ and $S$ is a ($p$\nobreakdash-)quasisyntomic $k$\nobreakdash-algebra, then $(W(k),(p))$ is an example of bounded prism and $\Prism_{S/W(k)}$ recovers the crystalline cohomology of $S$ (\cite[Section $4.6$]{bhatt_absolute_2022}). For a general $A/I$-algebra $S$, prismatic and Hodge--Tate complexes are defined by left Kan extension from the smooth case. The compatibility between the two definitions in the $p$\nobreakdash-quasisyntomic case is proved in \cite[Section $4$]{bhatt_absolute_2022}. Note in particular that $p$\nobreakdash-Cartier smooth algebras are not necessarily $p$\nobreakdash-quasisyntomic, as they are not necessarily $p$-flat.

The prismatic complex $\Prism_{S/A}$ naturally bears a $\phi_A$-semilinear Frobenius endomorphism $$\phi : \Prism^{(1)}_{S/A} \longrightarrow \Prism_{S/A},$$ where $\Prism^{({1})}_{S/A} := \Prism_{S/A} \widehat{\otimes}_{A,\phi_A}^\mathbb{L} A$ is the Frobenius-twisted prismatic complex. Our main result on the prismatic cohomology of $p$\nobreakdash-Cartier smooth algebras describes the image of this Frobenius endomorphism~$\phi$, and is formulated in terms of the functor $L\eta_I$. Following \cite[Section $6$]{bhatt_integral_2018} and for $A$ a commutative ring, $I \subset A$ an ideal defining a Cartier divisor in $\text{Spec}(A)$ and $C \in \mathcal{D}(A)$ a complex, the object $L\eta_I C \in \mathcal{D}(A)$ is defined as follows. The complex $C$ in the derived category $\mathcal{D}(A)$ is represented by a complex $(C^\bullet, d)$ of $A$-modules such that for all $i \in \Z$, $C^i$ is $I$-torsionfree \hbox{({\it i.e.}, $C^i[I]=0$)}. Define the complex $\eta_IC^\bullet$ with terms
$$\eta_I C^i := \{x \in I^iC^i \text{ } \vert \text{ } dx \in I^{i+1}C^{i+1}\}$$ and differential induced by the differential of $C^\bullet$. As an object of the derived category $\mathcal{D}(A)$ this construction does not depend on the choice of $(C^\bullet,d)$ (\cite[Corollary~$6.5$]{bhatt_integral_2018}), and we call this object~$L\eta_IC \in \mathcal{D}(A)$. Following \cite[Section $5$]{bhatt_topological_2019}, an object $\text{Fil}^\star\, C$ of the filtered derived category~$\mathcal{DF}(A)$ is called {\it connective for the Beilinson $t$-structure} if the graded pieces $\text{gr}^i\, C \in \mathcal{D}(A)$ are in degree $\leq i$ for all $i \in \Z$. The {\it connective cover for the Beilinson $t$-structure} of a filtered complex $\text{Fil}^\star\, C$ is the universal connective filtered complex with a map to $\text{Fil}^\star\, C$.

\needspace{5\baselineskip}

\begin{proposition}\label{LemmaLetaI}
    Let $A$ be a commutative ring, $I\subset A$ an ideal defining a Cartier divisor in $\emph{Spec}(A)$ and $C$ an object in the derived category $\mathcal{D}(A)$.
    \begin{enumerate}
        \item (\cite[Lemma~$6.4$]{bhatt_integral_2018}) For each integer $i \in \Z$, there is a canonical isomorphism of $A$-modules $$\emph{H}^i(L\eta_IC) \cong (\emph{H}^i(C)/\emph{H}^i(C)[I]) \otimes_A I^i.$$
        \item (\cite[Proposition~$5.8$]{bhatt_topological_2019}) $L\eta_I C \in \mathcal{D}(A)$ is the complex underlying the connective cover for the Beilinson $t$-structure of the $I$-adically filtered object $I^\star C \in \mathcal{DF}(A)$.
        \item (\cite[Proposition~$6.12$]{bhatt_integral_2018}) There is a natural equivalence
        $$(L\eta_I C) \otimes_A^{\mathbb{L}} A/I \xlongrightarrow{\sim} \emph{H}^\ast(C/I)$$
        in the derived category $\mathcal{D}(A/I)$, where the differential on $\emph{H}^\ast(C/I)$ is the Bockstein operator induced by the $I$-adic filtration on $C$.
    \end{enumerate}
\end{proposition}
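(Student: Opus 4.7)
All three statements are well-known and due to the cited authors. My plan is to sketch the arguments by representing $C$ as a complex $(C^\bullet, \partial)$ whose terms $C^i$ are $I$-torsionfree (always possible, e.g., by choosing a flat resolution), and reducing to the local situation where $I = (d)$ is principal, generated by a nonzerodivisor $d \in A$.

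For (1), I would compute $H^i(\eta_I C^\bullet)$ directly. The cycles in degree $i$ identify with $d^i \cdot Z^i(C)$: indeed, any $x = d^i z \in \eta_I C^i$ with $\partial x = 0$ satisfies $d^i \partial z = 0$, forcing $\partial z = 0$ by $d$-torsionfreeness of $C^{i+1}$. The boundaries identify with $d^i \cdot \{c \in C^i : dc \in B^i(C)\}$; this submodule sits inside $Z^i(C)$ because $dc = \partial y$ implies $d \cdot \partial c = \partial(dc) = 0$, and hence $\partial c = 0$ again by torsion-freeness. The quotient becomes $Z^i(C)/(Z^i(C) \cap d^{-1} B^i(C)) = H^i(C)/H^i(C)[d]$, and multiplication by $d^i$ packages this canonically as $(H^i(C)/H^i(C)[I]) \otimes_A I^i$. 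Since $I$ is invertible, the local identifications glue.

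For (2), I would equip $\eta_I C^\bullet$ with the filtration $\text{Fil}^n := \eta_I C^\bullet \cap I^n C^\bullet$. Applying (1) to the appropriate shifts shows that each graded piece $\text{gr}^n$ is concentrated in cohomological degrees $\leq n$, so the filtered complex is Beilinson-connective. The inclusion $\eta_I C^\bullet \hookrightarrow C^\bullet$ is compatible with the $I$-adic filtrations, and a universal-property check --- comparing, for an arbitrary Beilinson-connective filtered complex $\text{Fil}^\star D$ with a filtered map to $I^\star C$, the image of each $\text{Fil}^n D$ against the $\eta_I$ condition --- identifies this map as the Beilinson-connective cover of $I^\star C$.

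For (3), I would explicitly represent $(L\eta_I C) \otimes^{\mathbb{L}}_A A/I$ by the complex $\eta_I C^\bullet / d$, which is valid because each $\eta_I C^i$ is a submodule of the $d$-torsionfree module $C^i$. After dividing by $d^i$ on the $i$-th term, the induced differential sends $[z]$ (for $z \in C^i$ with $\partial z \in d C^{i+1}$) to $[\partial z / d]$, which is exactly the formula for the Bockstein associated to the sequence $0 \to C^\bullet/I \to C^\bullet/I^2 \to C^\bullet/I \to 0$. Constructing a map $\eta_I C^\bullet / d \to (H^\ast(C/I), \text{Bock})$ by projecting each $z$ to its class in $H^i(C/I)$ intertwines the two differentials and is a quasi-isomorphism, by a direct computation of the kernel and image of both complexes in the spirit of (1). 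The main obstacle will be the universal property in (2), which rests on the general formalism of the Beilinson $t$-structure on $\mathcal{DF}(A)$ developed in \cite{bhatt_topological_2019}.
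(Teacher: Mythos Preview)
The paper does not prove this proposition at all: it is stated purely as a compilation of cited results from \cite{bhatt_integral_2018} and \cite{bhatt_topological_2019}, with no argument given. Your proposal therefore goes well beyond what the paper does, supplying sketch proofs that the paper simply outsources to the references.

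Your sketches for (1) and (3) are correct and are essentially the arguments one finds in \cite[Lemma~6.4 and Proposition~6.12]{bhatt_integral_2018}: the direct cycle/boundary computation for (1), and the identification of the differential on $\eta_I C^\bullet/d$ with the Bockstein for (3). For (2), your outline is in the right spirit, but you correctly flag the universal-property check as the nontrivial part; in \cite[Proposition~5.8]{bhatt_topological_2019} this is handled by identifying the graded pieces of the filtration $\eta_I C^\bullet \cap I^\star C^\bullet$ with $\tau^{\leq \star}(C/I)\{\star\}$ and then invoking the general description of connective covers in the Beilinson $t$-structure. As written, your ``universal-property check'' is a placeholder rather than an argument, so if you intend to include a self-contained proof you would need to flesh out this step (or simply cite the reference, as the paper does).
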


The upshot is that the image of the Frobenius $\phi : \Prism^{(1)}_{S/A} \rightarrow \Prism_{S/A}$ will be identified with $L\eta_I \Prism_{S/A}$, when $S$ is a $p$\nobreakdash-Cartier smooth $A/I$-algebra. By the previous proposition, the complex $L\eta_I \Prism_{S/A}$ is characterised by a universal property in terms of the $I$-adic filtration on the prismatic complex $\Prism_{S/A}$. Following \cite[Section $15$]{bhatt_prisms_2022}, define the Nygaard filtration $\mathcal{N}^\star \Prism^{(1)}_{S/A}$ on the Frobenius-twisted prismatic complex $\Prism^{(1)}_{S/A}$ as the preimage\footnote{To make sense of this preimage, one needs to restrict to large $p$\nobreakdash-quasisyntomic $A/I$-algebras --for which the prismatic complex $\Prism_{S/A}$ is concentrated in degree $0$--, and then generalise the definition using descent on the $p$\nobreakdash-quasisyntomic site and left Kan extension.} of the $I$-adic filtration $I^\star \Prism_{S/A}$ via the morphism $\phi$. The Frobenius $$\phi : \Prism^{(1)}_{S/A} \longrightarrow \Prism_{S/A}$$ naturally upgrades to a map of filtered complexes 
\begin{equation}\label{equationfilteredFrobenius}
    \phi : \mathcal{N}^{\geq \star} \Prism^{(1)}_{S/A} \longrightarrow I^\star \Prism_{S/A}
\end{equation}
in the filtered derived category $\mathcal{DF}(A)$. Beware that the filtration $\mathcal{N}^{\geq \star} \Prism^{(1)}_{S/A}$ is in general not complete, and we denote by $\widehat{\Prism}^{(1)}_{S/A} \in \mathcal{D}(A)$ (resp. $\mathcal{N}^i \Prism^{(1)}_{S/A} \in \mathcal{D}(A/I)$, for $i \geq 0$) the Nygaard\nobreakdash-completed prismatic complex (resp. the Nygaard graded pieces). On the Nygaard graded pieces, the Frobenius~(\ref{equationfilteredFrobenius}) can be rewritten as a map $$\phi : \mathcal{N}^i \Prism^{(1)}_{S/A} \longrightarrow \overline{\Prism}_{S/A}\{i\}$$ for each $i \geq 0$, where $$\overline{\Prism}_{S/A}\{i\} := \overline{\Prism}_{S/A} \otimes_{A/I} (I/I^2)^{\otimes i}.$$ To describe the image of this Frobenius map, define the conjugate filtration $\text{Fil}_\star^{\text{conj}}\,\overline{\Prism}_{S/A}\{i\}$ on the Hodge--Tate complex $\overline{\Prism}_{S/A}\{i\}$ as the left Kan extension from smooth $A/I$-algebras of the increasing Postnikov filtration $\tau^{\leq \star} \overline{\Prism}_{-/A}\{i\}$.

\needspace{5\baselineskip}

\begin{theorem}[\cite{bhatt_prisms_2022, bhatt_absolute_2022}]\label{TheoremBS19qsyn}
    Let $(A,I)$ be a bounded prism, and $S$ an $A/I$-algebra.
    \begin{enumerate}
        \item For each $i \geq 0$, the Frobenius map $\phi : \mathcal{N}^{i} \Prism^{(1)}_{S/A} \rightarrow \overline{\Prism}_{S/A}\{i\}$ factors as
        $$\mathcal{N}^i \Prism^{(1)}_{S/A} \xlongrightarrow{\tilde{\phi}} \emph{Fil}_i^\emph{conj}\,\overline{\Prism}_{S/A}\{i\} \xlongrightarrow{\emph{can}} \overline{\Prism}_{S/A}\{i\},$$
        and the map $\tilde{\phi} : \mathcal{N}^i \Prism^{(1)}_{S/A} \rightarrow \emph{Fil}_i^\emph{conj}\,\overline{\Prism}_{S/A}\{i\}$ is an equivalence. In particular, the complex $\mathcal{N}^i \Prism^{(1)}_{S/A} \in \mathcal{D}(A/I)$ is concentrated in degrees $\leq i$.
        \item The Frobenius map $\phi : \Prism^{(1)}_{S/A} \rightarrow \Prism_{S/A}$ factors as $$\Prism^{(1)}_{S/A} \xlongrightarrow{\tilde{\phi}} L\eta_I \Prism_{S/A} \xlongrightarrow{\emph{can}} \Prism_{S/A}.$$
    \end{enumerate}
\end{theorem}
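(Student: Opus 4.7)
The plan is to reduce both assertions to the smooth case by left Kan extension, and then to verify them there via the Hodge-Tate and Nygaard filtration analyses of \cite{bhatt_prisms_2022, bhatt_absolute_2022}. All the objects involved --- the Frobenius-twisted prismatic complex $\Prism^{(1)}_{S/A}$, the Nygaard filtration $\mathcal{N}^{\geq \star} \Prism^{(1)}_{S/A}$, the Hodge-Tate complex $\overline{\Prism}_{S/A}$, and its conjugate filtration --- are defined by left Kan extension from smooth $A/I$\nobreakdash-algebras, and the Frobenius map together with its filtered refinement lift canonically. It therefore suffices to treat the case where $S$ is smooth over $A/I$.

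In the smooth case, I would work étale locally with a Frobenius-equivariant smooth $A$\nobreakdash-algebra lifting $S$, so that $\Prism_{S/A}$ is identified with the $p$\nobreakdash-completed de Rham complex of this lift. Under this identification the Nygaard filtration $\mathcal{N}^{\geq i} \Prism^{(1)}_{S/A}$ is, by construction, the preimage of $I^i \Prism_{S/A}$ under the Frobenius, which via divided Frobenius translates into an explicit condition on the Hodge filtration. Passing to the $i$th Nygaard graded piece and reducing modulo $I$, the Hodge-Tate comparison then matches $\mathcal{N}^i \Prism^{(1)}_{S/A}$ with the Postnikov truncation $\tau^{\leq i} \overline{\Prism}_{S/A}\{i\} = \emph{Fil}_i^\emph{conj}\,\overline{\Prism}_{S/A}\{i\}$, yielding simultaneously the factorisation and the equivalence $\tilde{\phi}$. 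In particular $\mathcal{N}^i \Prism^{(1)}_{S/A}$ is concentrated in cohomological degrees $\leq i$ in the smooth case; since $\mathcal{D}(A/I)^{\leq i}$ is closed under colimits, this bound is preserved by the left Kan extension to arbitrary $A/I$\nobreakdash-algebras.

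For part (2), I would deduce the factorisation from the universal property of $L\eta_I$ given in Proposition~\ref{LemmaLetaI}\,$(2)$, which describes $L\eta_I \Prism_{S/A}$ as the complex underlying the connective cover for the Beilinson $t$\nobreakdash-structure of the $I$\nobreakdash-adically filtered complex $I^\star \Prism_{S/A}$. The Frobenius naturally upgrades to a map of filtered complexes $\phi : \mathcal{N}^{\geq \star} \Prism^{(1)}_{S/A} \to I^\star \Prism_{S/A}$ in $\mathcal{DF}(A)$, and by part~(1) the source is connective for the Beilinson $t$\nobreakdash-structure, since its $i$th graded piece $\mathcal{N}^i \Prism^{(1)}_{S/A}$ lies in cohomological degrees $\leq i$. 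The universal property then produces the desired factorisation $\tilde{\phi} : \Prism^{(1)}_{S/A} \to L\eta_I \Prism_{S/A}$ as the underlying map of the induced morphism on connective covers.

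The hard part will be the smooth-case identification $\mathcal{N}^i \Prism^{(1)}_{S/A} \xlongrightarrow{\sim} \emph{Fil}_i^\emph{conj}\,\overline{\Prism}_{S/A}\{i\}$ underlying part~(1): tracking how the divided Frobenius $\phi/I^i$ interacts with the Hodge filtration, and then matching the resulting graded pieces with the Postnikov filtration on the Hodge-Tate complex, requires the delicate computation carried out in \cite[Section~$15$]{bhatt_prisms_2022}, while extending the statement from the $p$\nobreakdash-quasisyntomic setting (where $\Prism_{S/A}$ is discrete and the Nygaard filtration admits a naive description) to arbitrary $A/I$\nobreakdash-algebras is handled by the left Kan extension formalism developed in \cite[Section~$5$]{bhatt_absolute_2022}.
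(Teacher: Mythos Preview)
Your proposal is correct and follows essentially the same approach as the paper: reduce part~(1) to the smooth case via left Kan extension (citing \cite[Theorem~$15.3$]{bhatt_prisms_2022} and \cite[Proposition~$5.1.1$, Remark~$5.1.3$]{bhatt_absolute_2022}), note that the bound $\mathcal{D}(A/I)^{\leq i}$ survives sifted colimits, and then deduce part~(2) exactly as you do, using that $\mathcal{N}^{\geq \star}\Prism^{(1)}_{S/A}$ is Beilinson-connective by~(1) and invoking Proposition~\ref{LemmaLetaI}\,$(2)$ to factor the filtered Frobenius through the connective cover $L\eta_I\Prism_{S/A}$. The paper's proof is terser --- it simply cites the references for~(1) rather than sketching the smooth-case computation you outline --- but the structure is identical.
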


\begin{proof}
    The first part of $(1)$ is a special case of \cite[Proposition~$5.1.1$ and Remark~$5.1.3$]{bhatt_absolute_2022}, where the result is formulated for a general animated commutative $A/I$-algebra $S$. All the objects are left Kan extended from the smooth case, where the result is also given by \cite[Theorem~$15.3$]{bhatt_prisms_2022}. The functor $\tau^{\leq i} \overline{\Prism}_{-/A}\{i\}$ is concentrated in degrees $\leq i$, so its left Kan extension (defined as a sifted colimit) is also concentrated in degrees $\leq i$.
    
    $(2)$ The filtered complex $\mathcal{N}^{\geq \star} \Prism^{(1)}_{S/A} \in \mathcal{DF}(A)$ is connective for the Beilinson $t$-structure by $(1)$. So the map $\phi : \mathcal{N}^{\geq \star} \Prism^{(1)}_{S/A} \rightarrow I^\star \Prism_{S/A}$ factors through the connective cover for the Beilinson $t$-structure of the target. The result follows from Proposition~\ref{LemmaLetaI}\,$(2)$, by looking at the underlying complexes.
\end{proof}

We recall some important features of the relative prismatic complex in the smooth case, following~\cite{bhatt_prisms_2022}.

\needspace{5\baselineskip}

\begin{theorem}[Prismatic cohomology in the smooth case, \cite{bhatt_prisms_2022}]\label{TheoremBS19smooth}
    Let $(A,I)$ be a bounded prism, and $S$ a smooth $A/I$-algebra.
    \begin{enumerate}
        \item ($L\eta_I$ comparison) The Frobenius map $$\tilde{\phi} : \Prism^{(1)}_{S/A} \xlongrightarrow{\sim} L\eta_I \Prism_{S/A}$$
        of Theorem~\ref{TheoremBS19qsyn}\,$(2)$ is an equivalence in the derived category $\mathcal{D}(A)$.
        \item (Hodge--Tate comparison) There is a canonical isomorphism 
        $$(\Omega^\ast_{S/(A/I)})^\wedge_p \xlongrightarrow{\cong} \emph{H}^\ast(\overline{\Prism}_{S/A}\{\ast\}):= \bigoplus_{i \geq 0} \emph{H}^i(\overline{\Prism}_{S/A}\{i\})$$ 
        of differential graded $A/I$-algebras, where the differential on $\emph{H}^\ast(\overline{\Prism}_{S/A}\{\ast\})$ is the Bockstein operator induced by the $I$-adic filtration on $\Prism_{S/A}$.
        \item (de Rham comparison) There is a canonical equivalence $$\Prism^{(1)}_{S/A} \otimes_A^\mathbb{L} A/I \xlongrightarrow{\sim} (\Omega_{S/(A/I)})^\wedge_p$$ in the derived category $\mathcal{D}(A/I)$.
    \end{enumerate}
\end{theorem}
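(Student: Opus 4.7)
My plan is to prove the three statements in the order (2), (1), (3), since the Hodge--Tate comparison is the geometric input that drives everything, the $L\eta_I$ comparison is then verified by reduction modulo $I$, and the de Rham comparison falls out as a corollary via Proposition~\ref{LemmaLetaI}\,(3).

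For the Hodge--Tate comparison (2), I would argue locally. By the general construction of the prismatic site and its functoriality, both sides are compatible with localisation, so it suffices to treat the case of a framed smooth algebra, i.e.\ $S$ equipped with an étale map $A/I[T_1,\dots,T_d]\to S$. For the polynomial ring $S_0=A/I[T_1,\dots,T_d]$, I would lift the framing to a prism: take the $(p,I)$-completed polynomial $\delta$-algebra $A\langle X_1,\dots,X_d\rangle$ with $\phi_A(X_j)=X_j^p$, giving an explicit weakly initial object in $(S_0/A)_\Prism$. Computing the relative Hodge--Tate complex in this presentation via a Čech--Alexander nerve (or, equivalently, as a Koszul complex built from the $X_j-T_j^{[\flat]}$-type generators), one reads off that $\overline{\Prism}_{S_0/A}$ sits in degrees $[0,d]$ and its cohomology is a Koszul algebra recovering $\Omega^\ast_{S_0/(A/I)}$, with the Bockstein recovering the de Rham differential by an explicit check. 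The étale extension $S_0\to S$ is handled by base change along the étale-rigidity of prismatic cohomology, extending the isomorphism from $S_0$ to $S$. This yields both the Hodge--Tate identification and the statement that $\overline{\Prism}_{S/A}\{i\}$ is concentrated in degrees $\leq i$.

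For the $L\eta_I$ comparison (1), the map $\tilde\phi:\Prism^{(1)}_{S/A}\to L\eta_I\Prism_{S/A}$ already exists by Theorem~\ref{TheoremBS19qsyn}\,(2). To show it is an equivalence, I would reduce modulo $I$ and invoke Proposition~\ref{LemmaLetaI}\,(3), which identifies $(L\eta_I\Prism_{S/A})\otimes_A^\mathbb{L} A/I$ with $\mathrm{H}^\ast(\overline{\Prism}_{S/A})$ equipped with the Bockstein from the $I$-adic filtration. By the Hodge--Tate comparison just established, this complex is precisely $(\Omega_{S/(A/I)})^\wedge_p$. On the other hand, $\Prism^{(1)}_{S/A}\otimes^\mathbb{L}_A A/I$ can be computed via the Nygaard-graded pieces plus Theorem~\ref{TheoremBS19qsyn}\,(1), identifying it with the same de Rham complex through the conjugate filtration. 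Checking that $\tilde\phi$ induces this identification is a formal diagram chase comparing the inverse Cartier map of Bhatt--Scholze with the one built into the Frobenius. Once $\tilde\phi\otimes_A A/I$ is an equivalence, derived $(p,I)$-completeness of both sides (using that $S$ is smooth, hence $p$-completely finitely presented) promotes this to an equivalence before reduction.

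The de Rham comparison (3) then follows by composing: $\Prism^{(1)}_{S/A}\otimes^\mathbb{L}_A A/I \xrightarrow{\tilde\phi} (L\eta_I\Prism_{S/A})\otimes^\mathbb{L}_A A/I \xrightarrow{\sim} (\Omega_{S/(A/I)})^\wedge_p$, where the first map is an equivalence by (1) and the second by Proposition~\ref{LemmaLetaI}\,(3) together with (2). The main obstacle in this plan is the local computation for (2): one must carefully handle the prismatic envelope of a framed smooth algebra, control the shape of the nerve of the cover, and identify the Bockstein with the exterior derivative. All the remaining steps are formal consequences of this computation, standard properties of $L\eta$, and the Beilinson $t$-structure machinery recalled above.
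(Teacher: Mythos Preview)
Your overall architecture---prove (2) by local computation, deduce (1) by reduction modulo $I$, then read off (3) from (1), (2) and Proposition~\ref{LemmaLetaI}\,(3)---matches both the paper and the original arguments in \cite{bhatt_prisms_2022}. The paper itself does not reprove anything here: it reduces to the $p$-complete case via \cite[Corollary~4.1.14]{bhatt_absolute_2022}, cites \cite[Theorem~15.3]{bhatt_prisms_2022} for (1) and \cite[Theorem~4.11]{bhatt_prisms_2022} for (2), and obtains (3) exactly as you do. Your sketch for (2) is a reasonable outline of the proof of \cite[Theorem~4.11]{bhatt_prisms_2022}, and your derivation of (3) is identical to the paper's.

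There is, however, a genuine gap in your argument for (1). To identify the source $\Prism^{(1)}_{S/A}\otimes^\mathbb{L}_A A/I$ with the de Rham complex, you invoke ``the Nygaard-graded pieces plus Theorem~\ref{TheoremBS19qsyn}\,(1)''. But Theorem~\ref{TheoremBS19qsyn}\,(1) only identifies the graded pieces $\mathcal{N}^i\Prism^{(1)}_{S/A}$ with pieces of the conjugate filtration on the \emph{Hodge--Tate} complex $\overline{\Prism}_{S/A}$; it says nothing directly about $\Prism^{(1)}_{S/A}\otimes^\mathbb{L}_A A/I$. Recovering the latter from the Nygaard graded pieces would require knowing that the Nygaard filtration is complete, i.e.\ that $\Prism^{(1)}_{S/A}\simeq\widehat{\Prism}^{(1)}_{S/A}$---but by Proposition~\ref{propositionpquasismoothproperties}\,(4) this is equivalent to the statement (1) you are trying to prove, so the argument is circular.

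The missing input is an \emph{independent} identification of $\Prism^{(1)}_{S/A}\otimes^\mathbb{L}_A A/I$. In \cite{bhatt_prisms_2022,bhatt_absolute_2022} this comes from the general derived de Rham comparison $\Prism^{(1)}_{S/A}\otimes^\mathbb{L}_A A/I\simeq(\mathbb{L}\Omega_{S/(A/I)})^\wedge_p$ (valid for all $S$, see \cite[Proposition~5.2.5]{bhatt_absolute_2022}), together with the fact that $\mathbb{L}\Omega=\Omega$ for smooth $S$. With this in hand, your reduction-mod-$I$ strategy for (1) goes through: both sides become $(\Omega_{S/(A/I)})^\wedge_p$, one checks $\tilde\phi$ induces the identity under these identifications, and derived Nakayama finishes. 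This is exactly the diagram chase carried out later in the paper in the proof of Theorem~\ref{TheoremCartiersmoothMain}.
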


\begin{proof}
    By \cite[Corollary~$4.1.14$]{bhatt_absolute_2022}, the prismatic cohomology of $S$ is the same as the prismatic cohomology of the $p$\nobreakdash-adic completion of $S$. $(1)$ and $(2)$ then respectively follow from \cite[Theorem~$15.3$]{bhatt_prisms_2022} and \cite[Theorem~$4.11$]{bhatt_prisms_2022}. Using Proposition~\ref{LemmaLetaI}\,$(3)$, $(3)$ is a consequence of $(1)$ and $(2)$.
\end{proof}

\subsection{Prismatic cohomology of Cartier smooth algebras}

\vspace{-\parindent}
\hspace{\parindent}

In this subsection we prove Theorem~\ref{TheoremCartiersmoothMainintro} (Theorem~\ref{TheoremCartiersmoothMain}), characterising $p$\nobreakdash-Cartier smooth algebras in terms of their prismatic cohomology. 

Any $p$\nobreakdash-Cartier smooth algebra is $p$\nobreakdash-cotangent smooth by definition. We first extend some properties of smooth algebras to general $p$\nobreakdash-cotangent smooth algebras. Given a morphism $R \rightarrow S$ of commutative rings, denote by $\widehat{\mathbb{L}\Omega}_{S/R}$ the Hodge-completed derived de Rham complex, and by $(\widehat{\mathbb{L}\Omega}_{S/R})^\wedge_p$ its $p$\nobreakdash-adic completion.

\begin{proposition}\label{propositionpquasismoothproperties}
    Let $(A,I)$ be a bounded prism, and $S$ a $p$\nobreakdash-cotangent smooth $A/I$-algebra.
    \begin{enumerate}
        \item The canonical map $(\widehat{\mathbb{L}\Omega}_{S/(A/I)})^\wedge_p \rightarrow (\Omega_{S/(A/I)})^\wedge_p$ is an equivalence in the derived category $\mathcal{D}(A/I)$.
        \item The conjugate filtration $\emph{Fil}^\emph{conj}_\star\, \overline{\Prism}_{S/A}$ on the Hodge--Tate complex $\overline{\Prism}_{S/A} \in \mathcal{D}(A/I)$ coincides with the Postnikov filtration $\tau^{\leq \star} \overline{\Prism}_{S/A}$. In particular for each $i \geq 0$, the Frobenius map $$\phi : \mathcal{N}^i \Prism^{(1)}_{S/A} \longrightarrow \overline{\Prism}_{S/A} \{i\}$$ induces an equivalence
        $$\tilde{\phi} : \mathcal{N}^i \Prism^{(1)}_{S/A} \xlongrightarrow{\sim} \tau^{\leq i} \overline{\Prism}_{S/A}\{i\}$$
        in the derived category $\mathcal{D}(A/I)$.
        \item There is a canonical isomorphism
        $$(\Omega^\ast_{S/(A/I)})^\wedge_p \xlongrightarrow{\cong} \emph{H}^\ast(\overline{\Prism}_{S/A}\{\ast\})$$
        of differential graded $A/I$-algebras, where the differential on $\emph{H}^\ast(\overline{\Prism}_{S/A}\{\ast\})$ is the Bockstein operator induced by the $I$-adic filtration on $\Prism_{S/A}$.
        \item The Frobenius map $\tilde{\phi} : \Prism^{(1)}_{S/A} \rightarrow L\eta_I \Prism_{S/A}$ of Theorem~\ref{TheoremBS19qsyn}\,$(2)$ factors as
        $$\Prism^{(1)}_{S/A} \xlongrightarrow{\emph{can}} \widehat{\Prism}^{(1)}_{S/A} \xlongrightarrow{\tilde{\phi}} L\eta_I \Prism_{S/A},$$
        and the map $\tilde{\phi} : \widehat{\Prism}^{(1)}_{S/A} \rightarrow L\eta_I \Prism_{S/A}$ is an equivalence in the derived category $\mathcal{D}(A)$.
    \end{enumerate}
\end{proposition}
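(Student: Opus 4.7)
The four statements are already known in the smooth case (Theorem~\ref{TheoremBS19smooth}), and all the constructions in sight ($\mathbb{L}\Omega$, $\widehat{\mathbb{L}\Omega}$, $\Prism$, $\overline{\Prism}$, the conjugate and Nygaard filtrations) are defined by left Kan extension from smooth $A/I$-algebras. My plan is to exploit the fact that $p$\nobreakdash-cotangent smoothness of $S$ provides just enough control on $\mathbb{L}_{S/(A/I)}$ to transport these properties from the smooth case. The key technical input, used repeatedly, is that under $p$\nobreakdash-cotangent smoothness the $p$\nobreakdash-completed cotangent complex $(\mathbb{L}_{S/(A/I)})^\wedge_p$ has $p$\nobreakdash-complete Tor-amplitude $[0;0]$ (its reduction modulo $p$ is the flat module $\Omega^1_{(S/p)/((A/I)/p)}$ by $p$\nobreakdash-discreteness and base change for the cotangent complex); consequently each derived wedge power $\wedge^j \mathbb{L}_{S/(A/I)}$ is $p$\nobreakdash-completely flat in degree $0$, and the canonical map $(\Omega^j_{S/(A/I)})^\wedge_p \to (\wedge^j \mathbb{L}_{S/(A/I)})^\wedge_p$ is an equivalence, as can be checked modulo~$p$ by derived Nakayama.

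\textbf{Proof of $(1)$.} I would argue on Hodge filtrations. The Hodge filtration on $(\widehat{\mathbb{L}\Omega}_{S/(A/I)})^\wedge_p$ has graded pieces $(\wedge^j \mathbb{L}_{S/(A/I)})^\wedge_p[-j]$, while the naïve filtration on $(\Omega_{S/(A/I)})^\wedge_p$ has graded pieces $(\Omega^j_{S/(A/I)})^\wedge_p[-j]$. The key input identifies these two graded pieces. Since both filtered complexes are Hodge-complete ($\widehat{\mathbb{L}\Omega}$ by definition; $\Omega$ trivially, as the filtration is by $\Omega^{\geq j}$), the map is an equivalence.

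\textbf{Proof of $(2)$ and $(3)$.} The conjugate filtration $\mathrm{Fil}^{\mathrm{conj}}_\star\, \overline{\Prism}_{-/A}\{i\}$ is left Kan extended from the Postnikov filtration in the smooth case, where by Theorem~\ref{TheoremBS19smooth}\,$(2)$ its graded pieces are $\Omega^j_{-/(A/I)}\{i-j\}[-j]$. Taking the sifted colimit defining the left Kan extension, I get $\mathrm{gr}^{\mathrm{conj}}_j\, \overline{\Prism}_{S/A}\{i\} \simeq \wedge^j \mathbb{L}_{S/(A/I)}\{i-j\}[-j]$, which by the key input is $p$\nobreakdash-completely concentrated in cohomological degree $j$. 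Therefore $\mathrm{Fil}^{\mathrm{conj}}_i\, \overline{\Prism}_{S/A}\{i\}$ lies in degrees $\leq i$, the canonical map $\mathrm{Fil}^{\mathrm{conj}}_i \to \overline{\Prism}_{S/A}\{i\}$ factors through $\tau^{\leq i}\overline{\Prism}_{S/A}\{i\}$, and exhaustivity of the conjugate filtration forces this factorisation to be an equivalence (the higher cohomology of $\overline{\Prism}_{S/A}\{i\}$ is picked up only by strictly higher pieces). Combined with Theorem~\ref{TheoremBS19qsyn}\,$(1)$, this gives the claimed equivalence $\tilde\phi : \mathcal{N}^i \Prism^{(1)}_{S/A} \xrightarrow{\sim} \tau^{\leq i}\overline{\Prism}_{S/A}\{i\}$. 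Part~$(3)$ is then immediate: $\mathrm{H}^i(\overline{\Prism}_{S/A}\{i\})$ equals $\mathrm{H}^i(\mathrm{gr}^{\mathrm{conj}}_i\,\overline{\Prism}_{S/A}\{i\}) \simeq (\Omega^i_{S/(A/I)})^\wedge_p$ by the key input, and the multiplicative and Bockstein compatibility is inherited from the smooth case via the sifted colimit.

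\textbf{Proof of $(4)$.} I upgrade Theorem~\ref{TheoremBS19qsyn}\,$(2)$ to filtered complexes: the filtered Frobenius $\phi : \mathcal{N}^{\geq \star}\Prism^{(1)}_{S/A} \to I^\star \Prism_{S/A}$ factors through the Beilinson connective cover, namely through $I^\star L\eta_I \Prism_{S/A}$, whose $i$-th graded piece is $\tau^{\leq i}\overline{\Prism}_{S/A}\{i\}$. On graded pieces this filtered map is exactly $\tilde\phi : \mathcal{N}^i \Prism^{(1)}_{S/A} \to \tau^{\leq i}\overline{\Prism}_{S/A}\{i\}$, which is an equivalence by part~$(2)$. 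Passing to completions (the target is already $I$-adically complete, as $\Prism_{S/A}$ is $(p,I)$-complete and the Beilinson connective cover preserves completeness), I conclude that the filtered map is an equivalence after Nygaard-completing the source, and hence by taking underlying objects that $\tilde\phi : \widehat{\Prism}^{(1)}_{S/A} \xrightarrow{\sim} L\eta_I \Prism_{S/A}$ is an equivalence. This manifestly exhibits the factorisation of Theorem~\ref{TheoremBS19qsyn}\,$(2)$ through the canonical map $\Prism^{(1)}_{S/A} \to \widehat{\Prism}^{(1)}_{S/A}$.

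\textbf{Main obstacle.} The one step requiring genuine care is part~$(2)$: identifying a filtration defined by \emph{left Kan extension} (a sifted colimit) with a Postnikov truncation. The argument rests on the left Kan extension formula for $\wedge^j \mathbb{L}_{S/(A/I)}$ being cohomologically well-placed, which requires cotangent smoothness of the reduction mod $p$. Once this cohomological degree bookkeeping is under control, the remaining statements fall out mechanically from the smooth case.
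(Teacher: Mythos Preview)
Your proposal is correct and follows essentially the same approach as the paper: both arguments hinge on the observation that $p$\nobreakdash-cotangent smoothness forces the conjugate/Hodge graded pieces $(\wedge^j\mathbb{L}_{S/(A/I)})^\wedge_p[-j]$ to sit in the single cohomological degree $j$, and then deduce each part by comparing complete filtrations on graded pieces (Hodge for~$(1)$, conjugate for~$(2)$, $I$\nobreakdash-adic/Nygaard for~$(4)$). The only cosmetic difference is that the paper systematically reduces modulo~$p$ via derived Nakayama before arguing on graded pieces, whereas you work directly with the $p$\nobreakdash-completed objects; these are equivalent, and your compressed induction in~$(2)$ and your appeal to the Beilinson connective cover in~$(4)$ match the paper's reasoning (the paper cites \cite[Remark~D.10]{bhatt_absolute_2022} for the completeness of the $I$\nobreakdash-adic filtration on $L\eta_I\Prism_{S/A}$, which you should also invoke rather than asserting that ``the Beilinson connective cover preserves completeness'').
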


\begin{proof}
    $(1)$ By the derived Nakayama lemma (\cite[091N]{stacks_project_authors_stacks_2019}), it suffices to prove the result after derived reduction modulo $p$. By base change for the Hodge-completed derived de Rham complex and $p$\nobreakdash-discreteness, this is equivalent to proving that the canonical map $$\widehat{\mathbb{L}\Omega}_{(S/p)/(A/(p,I))} \longrightarrow \Omega_{(S/p)/(A/(p,I))}$$ is an equivalence in the derived category $\mathcal{D}(A/(p,I))$. Both sides are complete for the Hodge filtration, so it suffices to prove that this canonical map is an equivalence on the Hodge graded pieces. The corresponding map $$\mathbb{L}^n_{(S/p)/(A/(p,I))}[-n] \longrightarrow \Omega^n_{(S/p)/(A/(p,I))}[-n]$$ is the shift of a wedge power of the counit map $$\mathbb{L}_{(S/p)/(A/(p,I))} \longrightarrow \Omega^1_{(S/p)/(A/(p,I))}[0]$$ for all $n \geq 0$, which is an equivalence by $p$\nobreakdash-cotangent smoothness of the morphism $A/I \rightarrow S$.
    
    $(2)$ By left Kan extension of the Hodge--Tate comparison Theorem~\ref{TheoremBS19smooth}\,$(2)$ (or \cite[Remark~$4.1.7$]{bhatt_absolute_2022}), there is a canonical equivalence
    $$(\mathbb{L}^n_{S/(A/I)})^\wedge_p[-n]\{-n\} \xlongrightarrow{\sim} \text{gr}^\text{conj}_n\, \overline{\Prism}_{S/A}$$
    in the derived category $\mathcal{D}(A/I)$, for each $n\geq 0$. By the derived Nakayama lemma (\cite[091N]{stacks_project_authors_stacks_2019}) and $p$\nobreakdash-cotangent smoothness of the morphism $A/I \rightarrow S$, the complex $(\mathbb{L}^n_{S/(A/I)})^\wedge_p \in \mathcal{D}(A/I)$ is in degree $0$ and the conjugate graded piece $\text{gr}^\text{conj}_n\, \overline{\Prism}_{S/A}$ is thus in degree $n$. By induction on $n\geq 0$ and using the long exact sequence in cohomology groups associated to the homotopy cofibre sequence $$\text{Fil}^\text{conj}_{n-1}\, \overline{\Prism}_{S/A} \longrightarrow \text{Fil}^\text{conj}_n\, \overline{\Prism}_{S/A} \longrightarrow \text{gr}^\text{conj}_n\, \overline{\Prism}_{S/A},$$
    we deduce that $\text{Fil}^\text{conj}_n\, \overline{\Prism}_{S/A}$ belongs to $\mathcal{D}^{[0;n]}(A/I)$, that the induced morphism $$\text{Fil}^\text{conj}_{n-1}\, \overline{\Prism}_{S/A} \longrightarrow \tau^{\leq n-1} \text{Fil}^\text{conj}_n\, \overline{\Prism}_{S/A}$$ is an equivalence and that the morphism $$\text{H}^n(\text{Fil}^\text{conj}_n\,\overline{\Prism}_{S/A}) \longrightarrow \text{H}^n(\text{gr}^\text{conj}_n\, \overline{\Prism}_{S/A})$$ is an isomorphism of $A/I$-modules. The conjugate filtration $\text{Fil}^\text{conj}_\star\, \overline{\Prism}_{S/A}$ is moreover exhaustive, as a left Kan extension of the exhaustive Postnikov filtration $\tau^{\leq \star} \overline{\Prism}_{-/A}$. So the canonical map $$\text{Fil}^\text{conj}_n\, \overline{\Prism}_{S/A} \longrightarrow \tau^{\leq n} \lim\limits_{k} (\text{Fil}^\text{conj}_{n+k}\, \overline{\Prism}_{S/A}) \simeq \tau^{\leq n} \overline{\Prism}_{S/A}$$ is an equivalence for each $n \geq 0$, and the conjugate filtration $\text{Fil}^\text{conj}_\star\, \overline{\Prism}_{S/A}$ coincides with the Postnikov filtration $\tau^{\leq \star} \overline{\Prism}_{S/A}$. The last statement is a consequence of Theorem~\ref{TheoremBS19qsyn}\,$(1)$.
    
    $(3)$ By left Kan extension of the Hodge--Tate comparison Theorem~\ref{TheoremBS19smooth}\,$(2)$ (or \cite[Remark~$4.1.7$]{bhatt_absolute_2022}) and $(2)$, there is for each $n\geq 0$ a canonical isomorphism
    $$(\Omega^n_{S/(A/I)})^\wedge_p \xlongrightarrow{\cong} \text{H}^n(\overline{\Prism}_{S/A}\{n\})$$
    of $A/I$-modules. To prove that the de Rham differential $d$ coincides via these isomorphisms with the Bockstein operator $\beta$ on $\text{H}^\ast(\overline{\Prism}_{S/A}\{\ast\})$, it suffices to prove it when $S$ is a polynomial $A/I$-algebra, where this is Theorem~\ref{TheoremBS19smooth}\,$(2)$.
    
    $(4)$ For each integer $k \geq 0$, the map of filtered complexes $\phi : \mathcal{N}^{\geq \star} \Prism^{(1)}_{S/A} \rightarrow I^\star \Prism_{S/A}$ induces a map of filtered complexes $$\phi : \mathcal{N}^{\geq \star} \Prism^{(1)}_{S/A} / \mathcal{N}^{\geq \star + k} \Prism^{(1)}_{S/A} \longrightarrow I^\star \Prism_{S/A} / I^{\star + k} \Prism_{S/A} \simeq \Prism_{S/A} \otimes_A^{\mathbb{L}} I^\star/I^{\star+k}.$$
    Taking the inverse limit over $k\geq 0$ defines a map of filtered complexes $$\phi : \mathcal{N}^{\geq \star} \widehat{\Prism}^{(1)}_{S/A} \longrightarrow I^\star \Prism_{S/A}.$$
    The canonical map $\mathcal{N}^{\geq \star} \Prism^{(1)}_{S/A} \rightarrow \mathcal{N}^{\geq \star} \widehat{\Prism}^{(1)}_{S/A}$ is an equivalence on graded pieces, so the same argument as in Theorem~\ref{TheoremBS19qsyn}\,$(2)$ proves that $$\Prism^{(1)}_{S/A} \xlongrightarrow{\text{can}} \widehat{\Prism}^{(1)}_{S/A} \xlongrightarrow{\phi} \Prism_{S/A}$$
    factors as $$\Prism^{(1)}_{S/A} \xlongrightarrow{\text{can}} \widehat{\Prism}^{(1)}_{S/A} \xlongrightarrow{\tilde{\phi}} L\eta_I \Prism_{S/A} \xlongrightarrow{\text{can}} \Prism_{S/A},$$
    where the composite $\tilde{\phi} : \Prism^{(1)}_{S/A} \rightarrow L\eta_I \Prism_{S/A}$ of the first two maps is the map of Theorem~\ref{TheoremBS19qsyn}\,$(2)$. Remark that the $I$-adic filtration on $L\eta_I \Prism_{S/A}$ is complete by \cite[Remark~$D.10$]{bhatt_absolute_2022}. To prove that $\tilde{\phi} : \widehat{\Prism}^{(1)}_{S/A} \rightarrow L\eta_I \Prism_{S/A}$ is an equivalence, it suffices by completeness to prove that it is an equivalence on graded pieces. Proposition~\ref{LemmaLetaI}\,$(2)$ (more precisely, \cite[Theorem~$5.4\,(2)$]{bhatt_topological_2019}) identifies the \hbox{$i^\text{th}$ graded} piece of $L\eta_I \Prism_{S/A}$ with the truncation $\tau^{\leq i}$ of the $I$-adic graded piece $\overline{\Prism}_{S/A}\{i\}$. Together with the identification $\mathcal{N}^i \Prism^{(1)}_{S/A} \xrightarrow{\sim} \mathcal{N}^i \widehat{\Prism}^{(1)}_{S/A}$, it then suffices to prove that $$\tilde{\phi} : \mathcal{N}^i \Prism^{(1)}_{S/A} \longrightarrow \tau^{\leq i} \overline{\Prism}_{S/A} \{i\}$$ is an equivalence for all $i \geq 0$, which is $(2)$.
\end{proof}

\begin{remark}
    The factorisation part of Proposition~\ref{propositionpquasismoothproperties}\,$(4)$ and its proof hold more generally for $(A,I)$ a bounded prism and $S$ any $A/I$-algebra.
\end{remark}

\begin{theorem}\label{TheoremCartiersmoothMain}
    Let $(A,I)$ be a bounded prism, and $S$ a $p$\nobreakdash-cotangent smooth $A/I$-algebra. The following are equivalent:
    \begin{description}[align=left]
        \item[$(CSm)$] The inverse Cartier map $$\Omega^n_{(S/p)/(A/(p,I))} \otimes_{A/(p,I),\phi_{A/(p,I)}} A/(p,I) \xlongrightarrow{C^{-1}} \emph{H}^n(\Omega^\bullet_{(S/p)/(A/(p,I))})$$ is an isomorphism of $A/(p,I)$-modules for all $n\geq 0$, {\it i.e.}, $S$ is $p$\nobreakdash-Cartier smooth over $A/I$.
        \item[$(\mathbb{L}\Omega=\widehat{\mathbb{L}\Omega})$] The Hodge-completion map $(\mathbb{L}\Omega_{S/(A/I)})^\wedge_p \rightarrow (\widehat{\mathbb{L}\Omega}_{S/(A/I)})^\wedge_p$ is an equivalence in the derived category $\mathcal{D}(A/I)$.
        \item[$(\mathbb{L}\Omega = \Omega)$] The counit map $(\mathbb{L}\Omega_{S/(A/I)})^\wedge_p \rightarrow (\Omega_{S/(A/I)})^\wedge_p$ is an equivalence in the derived category $\mathcal{D}(A/I)$.
        \item[$(dR)$] The de Rham comparison map $\Prism^{(1)}_{S/A} \otimes^\mathbb{L}_A A/I \rightarrow (\Omega_{S/(A/I)})^\wedge_p$ is an equivalence in the derived category $\mathcal{D}(A/I)$.
        \item[$(\overline{\Prism})$] The canonical map $\Prism^{(1)}_{S/A} \otimes_A^\mathbb{L} A/I \rightarrow \emph{H}^\ast(\overline{\Prism}_{S/A}\{\ast\})$ is an equivalence in the derived category $\mathcal{D}(A/I)$.
        \item[$(\Prism=\widehat{\Prism})$] The Nygaard-completion map $\Prism^{(1)}_{S/A} \rightarrow \widehat{\Prism}^{(1)}_{S/A}$ is an equivalence in the derived category $\mathcal{D}(A)$.
        \item[$(L\eta)$] The Frobenius map $\tilde{\phi} : \Prism^{(1)}_{S/A} \rightarrow L\eta_I \Prism_{S/A}$ is an equivalence in the derived category $\mathcal{D}(A)$.
        \item[$(\mathcal{N})$] The canonical map $\emph{H}^0_\emph{B} : \Prism^{(1)}_{S/A} \otimes_A^\mathbb{L} A/I \rightarrow \emph{H}^\ast(\mathcal{N}^\ast\Prism^{(1)}_{S/A})$ is an equivalence in the derived category $\mathcal{D}(A/I)$, where the differential on $\emph{H}^\ast(\mathcal{N}^\ast\Prism^{(1)}_{S/A})$ is the Bockstein operator induced by the Nygaard filtration on $\Prism^{(1)}_{S/A}$.
        \item[$(\mathcal{N}^{\geq})$] The Frobenius map $\tau^{\leq i} \phi : \tau^{\leq i} \mathcal{N}^{\geq i} \Prism^{(1)}_{S/A} \rightarrow \tau^{\leq i} I^i \Prism_{S/A}$ is an equivalence in the derived category $\mathcal{D}(A)$ for all $i \geq 0$.
    \end{description}
\end{theorem}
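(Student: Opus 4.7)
The strategy is to pivot around Proposition~\ref{propositionpquasismoothproperties}, which for any $p$-cotangent smooth $S$ already guarantees several automatic identifications among the objects involved. Three pairs of equivalences follow immediately: $(\mathbb{L}\Omega=\widehat{\mathbb{L}\Omega}) \Leftrightarrow (\mathbb{L}\Omega = \Omega)$ from part~(1), which provides $(\widehat{\mathbb{L}\Omega})^\wedge_p \simeq (\Omega)^\wedge_p$; $(\Prism=\widehat{\Prism}) \Leftrightarrow (L\eta)$ from part~(4), which provides $\widehat{\Prism}^{(1)}_{S/A} \xlongrightarrow{\sim} L\eta_I\Prism_{S/A}$; and $(\overline{\Prism}) \Leftrightarrow (dR)$ from part~(3), which identifies $\emph{H}^\ast(\overline{\Prism}_{S/A}\{\ast\})$ with its Bockstein differential as $(\Omega^\bullet_{S/(A/I)})^\wedge_p$.

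Next I would relate the prismatic conditions to the de Rham conditions by reducing modulo $I$. Left Kan extending the smooth de Rham comparison (Theorem~\ref{TheoremBS19smooth}(3)) yields $\Prism^{(1)}_{S/A} \otimes_A^\mathbb{L} A/I \simeq (\mathbb{L}\Omega_{S/(A/I)})^\wedge_p$, and the matching of the Nygaard filtration with the Hodge filtration gives $\widehat{\Prism}^{(1)}_{S/A} \otimes_A^\mathbb{L} A/I \simeq (\widehat{\mathbb{L}\Omega}_{S/(A/I)})^\wedge_p$. Since $\Prism^{(1)}_{S/A}$ and $\widehat{\Prism}^{(1)}_{S/A}$ are $(p,I)$-complete, derived Nakayama converts these into $(\Prism=\widehat{\Prism}) \Leftrightarrow (\mathbb{L}\Omega=\widehat{\mathbb{L}\Omega})$ and $(dR) \Leftrightarrow (\mathbb{L}\Omega = \Omega)$. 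Combined with the first step, this links six of the conditions: $(\mathbb{L}\Omega=\widehat{\mathbb{L}\Omega})$, $(\mathbb{L}\Omega = \Omega)$, $(dR)$, $(\overline{\Prism})$, $(\Prism=\widehat{\Prism})$, $(L\eta)$.

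For the Nygaard conditions, Proposition~\ref{propositionpquasismoothproperties}(2) provides the equivalence $\tilde\phi : \mathcal{N}^i\Prism^{(1)}_{S/A} \xlongrightarrow{\sim} \tau^{\leq i}\overline{\Prism}_{S/A}\{i\}$. Combined with Proposition~\ref{LemmaLetaI}(2), which identifies $L\eta_I\Prism_{S/A}$ with the Beilinson connective cover of the $I$-adic filtration on $\Prism_{S/A}$, the truncated Frobenius condition $(\mathcal{N}^{\geq})$ translates graded-piece by graded-piece to $(L\eta)$. For $(\mathcal{N})$, using parts~(2) and~(3) of Proposition~\ref{propositionpquasismoothproperties}, the target of $\emph{H}^0_\emph{B}$ is identified with $(\Omega^\bullet_{S/(A/I)})^\wedge_p$ equipped with its de Rham differential, so $(\mathcal{N})$ unwinds to $(\mathbb{L}\Omega = \Omega)$.

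The main obstacle is the remaining equivalence $(CSm) \Leftrightarrow (\mathbb{L}\Omega = \Omega)$, the only step in which the Cartier isomorphism enters directly. By derived Nakayama it reduces to a characteristic $p$ statement: for a cotangent smooth morphism of $\F_p$-algebras $R/p \to S/p$, the counit $\mathbb{L}\Omega_{(S/p)/(R/p)} \to \Omega_{(S/p)/(R/p)}$ is an equivalence if and only if the inverse Cartier map $C^{-1}$ is an isomorphism in every degree. The approach is to compare the conjugate filtration on $\mathbb{L}\Omega_{(S/p)/(R/p)}$ — whose graded pieces under cotangent smoothness are the Frobenius twists $\Omega^n_{(S/p)/(R/p)} \otimes_{R/p,\phi} R/p\,[-n]$ — with the Postnikov filtration $\tau^{\leq\star}$ on $\Omega_{(S/p)/(R/p)}$, whose graded pieces are $\emph{H}^n(\Omega^\bullet_{(S/p)/(R/p)})[-n]$. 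The natural map $\mathbb{L}\Omega \to \Omega$ respects these filtrations, and the induced map on the $n$-th graded piece is precisely the inverse Cartier map $C^{-1}$. Exhaustiveness of both filtrations then promotes graded-piece equivalences to a filtered equivalence. This last step is the technical heart of the proof, directly converting the content of the Cartier isomorphism into a structural property of the derived de Rham complex.
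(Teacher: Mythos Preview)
Your strategy is essentially the paper's, and your treatment of $(CSm) \Leftrightarrow (\mathbb{L}\Omega = \Omega)$ and of the block $(\mathbb{L}\Omega = \widehat{\mathbb{L}\Omega}) \Leftrightarrow (\mathbb{L}\Omega = \Omega) \Leftrightarrow (dR) \Leftrightarrow (\overline{\Prism}) \Leftrightarrow (\Prism = \widehat{\Prism}) \Leftrightarrow (L\eta)$ is correct and matches the paper (which packages the second block into a single commutative diagram from \cite{bhatt_absolute_2022}, but the content is the same).

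The gap is in your handling of $(\mathcal{N}^{\geq})$. You write that, via Proposition~\ref{propositionpquasismoothproperties}(2) and Proposition~\ref{LemmaLetaI}(2), ``the truncated Frobenius condition $(\mathcal{N}^{\geq})$ translates graded-piece by graded-piece to $(L\eta)$''. But $(\mathcal{N}^{\geq})$ is a statement about the \emph{filtered} pieces $\tau^{\leq i}\mathcal{N}^{\geq i}\Prism^{(1)}_{S/A} \to \tau^{\leq i} I^i\Prism_{S/A}$, not the graded pieces. The graded statement---that $\mathcal{N}^i \Prism^{(1)}_{S/A} \simeq \tau^{\leq i}\overline{\Prism}_{S/A}\{i\}$---is automatic for any $p$-cotangent smooth $S$ (this is Proposition~\ref{propositionpquasismoothproperties}(2)), so it cannot by itself distinguish the Cartier smooth case. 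And since the Nygaard filtration is not complete in general (that is precisely the content of $(\Prism = \widehat{\Prism})$), agreement on graded pieces does not lift to agreement on filtered pieces or on underlying objects. Nor is it the case that the filtered pieces of the Beilinson connective cover of $I^\star\Prism_{S/A}$ are given by $\tau^{\leq i} I^i\Prism_{S/A}$, so the universal property of $L\eta_I$ does not plug directly into $(\mathcal{N}^{\geq})$ either.

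The paper's argument for $(L\eta) \Leftrightarrow (\mathcal{N}^{\geq})$ is genuinely the most delicate part of the proof and does not reduce to a graded-piece check. For $(\mathcal{N}^{\geq}) \Rightarrow (L\eta)$ one uses that $\text{H}^n(\mathcal{N}^{\geq i}) \to \text{H}^n(\mathcal{N}^{\geq i-1})$ is an isomorphism for $n \geq i$ (since the cofibre $\mathcal{N}^{i-1}$ lives in degrees $\leq i-1$), then compares the four-term exact sequence coming from $\mathcal{N}^{\geq i} \to \mathcal{N}^{\geq i-1} \to \mathcal{N}^{i-1}$ with the analogous sequence on the $I$-adic side, where Proposition~\ref{LemmaLetaI}(1) identifies $\text{H}^i(L\eta_I\Prism_{S/A})$ with the image of $\text{H}^i(I^i\Prism_{S/A}) \to \text{H}^i(I^{i-1}\Prism_{S/A})$. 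For $(L\eta) \Rightarrow (\mathcal{N}^{\geq})$ the paper runs an induction on $i$: the base case $i=0$ uses that $\text{H}^0(\Prism_{S/A})$ is $I$-torsionfree, and the inductive step is a five-lemma argument on the long exact sequence of the cofibre $\mathcal{N}^{\geq i+1} \to \mathcal{N}^{\geq i} \to \mathcal{N}^i$ against $I^{i+1}\Prism_{S/A} \to I^i\Prism_{S/A} \to \overline{\Prism}_{S/A}\{i\}$. You should expect to supply arguments of this shape; the ``graded-piece by graded-piece'' shortcut does not exist here. (As a side remark, you describe $(CSm) \Leftrightarrow (\mathbb{L}\Omega = \Omega)$ as ``the technical heart'', but in the paper that step is short; the real work is $(\mathcal{N}^{\geq})$.)
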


\begin{proof}
    $(CSm)\Leftrightarrow(\mathbb{L}\Omega=\Omega)$ By derived Nakayama (\cite[091N]{stacks_project_authors_stacks_2019}), the counit map $$(\mathbb{L}\Omega_{S/(A/I)})^\wedge_p \longrightarrow (\Omega_{S/(A/I)})^\wedge_p$$ is an equivalence in the derived category $\mathcal{D}(A/I)$ if and only if the counit map $$\mathbb{L}\Omega_{(S/p)/(A/(p,I))} \longrightarrow \Omega_{(S/p)/(A/(p,I))}$$ is an equivalence in the derived category $\mathcal{D}(A/(p,I))$, {\it i.e.}, if the induced map $$\text{H}^n(\mathbb{L}\Omega_{(S/p)/(A/(p,I))}) \longrightarrow \text{H}^n(\Omega_{(S/p)/(A/(p,I))})$$ is an isomorphism of $A/(p,I)$-modules for each $n \geq 0$. By \cite[Proposition~$3.5$]{bhatt_p-adic_2012}, the derived de Rham complex $\mathbb{L}\Omega_{(S/p)/(A/(p,I))}$ is equipped with an exhaustive $\mathbb{N}$-indexed increasing filtration $\text{Fil}^\text{conj}_\star\, \mathbb{L}\Omega_{(S/p)/(A/(p,I))}$, whose graded pieces are given by $$C^{-1} : \wedge^n (\mathbb{L}_{(S/p)/(A/(p,I))} \otimes_{A/(p,I), \phi_{A/(p,I)}}^\mathbb{L} A/(p,I))[-n] \xlongrightarrow{\sim} \text{gr}^\text{conj}_n\, \mathbb{L}\Omega_{(S/p)/(A/(p,I))},$$ where $C^{-1}$ is the left Kan extension of the inverse Cartier map. By cotangent smoothness of the morphism $A/(p,I) \rightarrow S/p$, the graded piece $\text{gr}^\text{conj}_n\, \mathbb{L}\Omega_{(S/p)/(A/(p,I))}$ is thus concentrated in degree $n$ for each~$n \geq 0$. Arguing by induction on $n\geq 0$ and by exhaustiveness of the conjugate filtration $\text{Fil}^\text{conj}_\star\, \mathbb{L}\Omega_{(S/p)/(A/(p,I))}$, there is a canonical isomorphism $$\text{H}^n(\text{gr}^\text{conj}_n\, \mathbb{L}\Omega_{(S/p)/(A/(p,I))}) \xlongrightarrow{\cong} \text{H}^n(\mathbb{L}\Omega_{(S/p)/(A/(p,I))}).$$ In particular the counit map $\mathbb{L}\Omega_{(S/p)/(A/(p,I))} \rightarrow \Omega_{(S/p)/(A/(p,I))}$ is an equivalence in the derived category $\mathcal{D}(A/(p,I))$ if and only if the inverse Cartier map 
    $$C^{-1} : \Omega^n_{(S/p)/(A/(p,I))} \otimes_{A/(p,I),\phi_{A/(p,I)}} A/(p,I) \longrightarrow \text{H}^n(\Omega^\bullet_{(S/p)/(A/(p,I))})$$
    is an isomorphism of $A/(p,I)$-modules for all $n\geq 0$.
    
    $(\mathbb{L}\Omega = \widehat{\mathbb{L}\Omega})\Leftrightarrow(\mathbb{L}\Omega=\Omega)\Leftrightarrow(dR)\Leftrightarrow(\overline{\Prism})\Leftrightarrow(\Prism=\widehat{\Prism})\Leftrightarrow(L\eta)$ By \cite[Proposition~$5.2.3$]{bhatt_absolute_2022} there is a commutative diagram
    $$\begin{tikzcd}        
     & \widehat{\Prism}^{(1)}_{S/A} \ar[ddrr,"\tilde{\phi}","\sim"'] \ar[ddd,dashed] & & \\
    \Prism^{(1)}_{S/A} \ar[ur,"\text{can}"] \ar[drrr, "\tilde{\phi}"] \ar[ddd,dashed] & & & \\
     & & & L\eta_I\Prism_{S/A} \ar[ddd,dashed] \\
     & (\widehat{\mathbb{L}\Omega}_{S/(A/I)})^\wedge_p \ar[dr,"\text{can}","\sim"'] & & \\
    (\mathbb{L}\Omega_{S/(A/I)})^\wedge_p \ar[rr] \ar[ur,"\text{can}"] \ar[rrrd] & & (\Omega_{S/(A/I)})^\wedge_p \ar[dr,"\text{HT}","\sim"'] & \\
     & & & \text{H}^\ast(\overline{\Prism}_{S/A}\{\ast\})
    \end{tikzcd}$$
    in the derived category $\mathcal{D}(A)$, where the dashed arrows are derived reduction modulo $I$ (\cite[Proposition~$5.2.5$ and Corollary~$5.2.8$]{bhatt_absolute_2022} and Proposition~\ref{LemmaLetaI}\,$(3)$). The map $(\mathbb{L}\Omega_{S/(A/I)})^\wedge_p \rightarrow \text{H}^\ast(\overline{\Prism}_{S/A}\{\ast\})$ is defined as the composite $$(\mathbb{L}\Omega_{S/(A/I)})^\wedge_p \longrightarrow (\Omega_{S/(A/I)})^\wedge_p \longrightarrow \text{H}^\ast(\overline{\Prism}_{S/A}\{\ast\}).$$ The three equivalences in this diagram hold for any $p$\nobreakdash-cotangent smooth $A/I$-algebra $S$ by Proposition~\ref{propositionpquasismoothproperties}. By derived Nakayama (\cite[091N]{stacks_project_authors_stacks_2019}) and commutativity of this diagram, the conditions $(\mathbb{L}\Omega=\widehat{\mathbb{L}\Omega})$, $(\mathbb{L}\Omega=\Omega)$, $(dR)$, $(\overline{\Prism})$, $(L\eta)$ and $(\Prism=\widehat{\Prism})$ are then equivalent.
    
    $(L\eta)\Leftrightarrow(\mathcal{N})$ There is a commutative diagram
    $$\begin{tikzcd}        
    \Prism^{(1)}_{S/A} \otimes_A^\mathbb{L} A/I \arrow["\text{H}^0_\text{B}"]{r} \arrow["\tilde{\phi}"]{d} & \text{H}^\ast(\mathcal{N}^\ast\Prism^{(1)}_{S/A}) \ar[d,"\tilde{\phi}","\sim"'] \\
    (L\eta_I \Prism_{S/A}) \otimes_A^\mathbb{L} A/I \arrow["\text{H}^0_\text{B}","\sim"']{r} & \text{H}^\ast(\overline{\Prism}_{S/A}\{\ast\})
    \end{tikzcd}$$
    in the derived category $\mathcal{D}(A/I)$, where the maps $\tilde{\phi}$ are defined in Theorem~\ref{TheoremBS19qsyn}, and the functor $\text{H}^0_\text{B}$ (where B refers to the Beilinson $t$-structure) is defined in \cite[Theorem~$5.4$]{bhatt_topological_2019}. More precisely, the functor $\text{H}^0_\text{B}$ sends the filtered complex $$\mathcal{N}^{\geq \star} \Prism^{(1)}_{S/A} \in \mathcal{DF}(A)$$ to $$\text{H}^\ast(\mathcal{N}^\ast \Prism^{(1)}_{S/A}) \in \mathcal{D}(A) \simeq \mathcal{DF}(A)^\heartsuit,$$
    where $\mathcal{DF}(A)^\heartsuit$ is the heart of the filtered derived category $\mathcal{DF}(A)$ for its Beilinson $t$-structure. Because $\mathcal{N}^\star \Prism^{(1)}_{S/A}$ is naturally an object of the derived category $\mathcal{D}(A/I)$, this induces a map $$\Prism^{(1)}_{S/A} \otimes_A^\mathbb{L} A/I \xlongrightarrow{\text{H}^0_\text{B}} \text{H}^\ast(\mathcal{N}^\ast \Prism^{(1)}_{S/A})$$ on the underlying complexes. For any $p$\nobreakdash-cotangent smooth $A/I$-algebra $S$, the right and bottom maps of the previous diagram are equivalences (Propositions~\ref{propositionpquasismoothproperties}\,$(2)$ and \ref{LemmaLetaI}\,$(3)$). By derived Nakayama and commutativity of the diagram, the Frobenius map $$\tilde{\phi} : \Prism^{(1)}_{S/A} \longrightarrow L\eta_I \Prism_{S/A}$$ is an equivalence in the derived category $\mathcal{D}(A)$ if and only if the canonical map $$\text{H}^0_\text{B} : \Prism^{(1)}_{S/A} \otimes_A^\mathbb{L} A/I \longrightarrow \text{H}^\ast(\mathcal{N}^\ast\Prism^{(1)}_{S/A})$$ is an equivalence in the derived category $\mathcal{D}(A/I)$.
    
    $(\mathcal{N}^\geq)\Rightarrow(L\eta)$ Assume that the Frobenius map $$\tau^{\leq i} \phi : \tau^{\leq i} \mathcal{N}^{\geq i} \Prism^{(1)}_{S/A} \longrightarrow \tau^{\leq i} I^i \Prism_{S/A}$$ is an equivalence for each $i\geq 0$, and in particular that the Frobenius map $$\phi : \text{H}^i(\mathcal{N}^{\geq i} \Prism^{(1)}_{S/A}) \longrightarrow \text{H}^i(I^i \Prism_{S/A})$$ is an isomorphism of $A$-modules for each $i \geq 0$. The homotopy cofibre sequence $$\mathcal{N}^{\geq i} \Prism^{(1)}_{S/A} \longrightarrow \mathcal{N}^{\geq i-1} \Prism^{(1)}_{S/A} \longrightarrow \mathcal{N}^{i-1} \Prism^{(1)}_{S/A}$$ induces an exact sequence
    $$\text{H}^{n-1}(\mathcal{N}^{i-1}\Prism^{(1)}_{S/A}) \longrightarrow \text{H}^n(\mathcal{N}^{\geq i} \Prism^{(1)}_{S/A}) \longrightarrow \text{H}^n(\mathcal{N}^{\geq i-1} \Prism^{(1)}_{S/A}) \longrightarrow \text{H}^n(\mathcal{N}^{i-1}\Prism^{(1)}_{S/A})$$ for each integer $n$. By Proposition~\ref{propositionpquasismoothproperties}\,$(2)$, the graded piece $\mathcal{N}^{i-1}\Prism^{(1)}_{S/A}$ is in degrees $[0;i-1]$ and the Frobenius map $\phi : \mathcal{N}^{i-1} \Prism^{(1)}_{S/A} \rightarrow \overline{\Prism}_{S/A}\{i-1\}$ is an isomorphism in degrees $\leq i-1$. In particular the morphism $\text{H}^n(\mathcal{N}^{\geq i}\Prism^{(1)}_{S/A}) \rightarrow \text{H}^n(\mathcal{N}^{\geq i-1} \Prism^{(1)}_{S/A})$ is an isomorphism for all $i,n\geq 0$ satisfying $i<n$, and the canonical morphism $$\text{H}^i(\mathcal{N}^{\geq i-1} \Prism^{(1)}_{S/A}) \longrightarrow \text{H}^i(\mathcal{N}^{\geq 0}\Prism^{(1)}_{S/A}) = \text{H}^i(\Prism^{(1)}_{S/A})$$ is an isomorphism for all $i \geq 0$. The $A$-module $\text{H}^i(L\eta_I\Prism_{S/A})$ is canonically identified with the image of the morphism $\text{H}^i(I^i\Prism_{S/A}) \rightarrow \text{H}^i(I^{i-1}\Prism_{S/A})$ by Proposition~\ref{LemmaLetaI}\,$(1)$. The Frobenius map $$\phi : \mathcal{N}^{\geq \star} \Prism^{(1)}_{S/A} \longrightarrow I^\star \Prism_{S/A}$$ thus induces a map of short exact sequences
    $$\begin{tikzcd}[column sep = small]
    \text{H}^{i-1}(\mathcal{N}^{\geq i-1} \Prism^{(1)}_{S/A}) \ar[r] \ar[d,"\phi"] & \text{H}^{i-1}(\mathcal{N}^{i-1}\Prism^{(1)}_{S/A}) \ar[r] \ar[d,"\phi"] & \text{H}^i(\mathcal{N}^{\geq i} \Prism^{(1)}_{S/A}) \ar[r] \ar[d,"\phi"] & \text{H}^i(\Prism^{(1)}_{S/A}) \ar[r] \ar[d,"\tilde{\phi}"] & 0 \\
    \text{H}^{i-1}(I^{i-1}\Prism_{S/A}) \ar[r] & \text{H}^{i-1}(\overline{\Prism}_{S/A}\{i-1\}) \ar[r] & \text{H}^i(I^i \Prism_{S/A}) \ar[r] & \text{H}^i(L\eta_I\Prism_{S/A}) \ar[r] & 0.
    \end{tikzcd}$$
    The first three vertical maps are isomorphisms by assumption, thus so is the right vertical map, for each $i\geq 0$. So the Frobenius map $\tilde{\phi} : \Prism^{(1)}_{S/A} \rightarrow L\eta_I \Prism_{S/A}$ is an equivalence.
    
    $(L\eta)\Rightarrow(\mathcal{N}^\geq)$ Assume the Frobenius map $\tilde{\phi} : \Prism^{(1)}_{S/A} \rightarrow L\eta_I\Prism_{S/A}$ is an equivalence. We prove by induction on $i\geq 0$ that for every integer $n \leq i$, the Frobenius morphism $$\phi : \text{H}^n(\mathcal{N}^{\geq i} \Prism^{(1)}_{S/A}) \longrightarrow \text{H}^n(I^i\Prism_{S/A})$$ is an isomorphism of $A$-modules. For $i=0$, it suffices to prove that the Frobenius morphism $$\phi : \text{H}^0(\Prism^{(1)}_{S/A}) \longrightarrow \text{H}^0(\Prism_{S/A})$$ is an isomorphism, or equivalently that the canonical map $$\text{H}^0(L\eta_I \Prism_{S/A}) \cong \text{H}^0(\Prism_{S/A}) / \text{H}^0(\Prism_{S/A})[I] \longrightarrow \text{H}^0(\Prism_{S/A})$$ is an isomorphism. For any prism $(B,IB)$ over $(A,I)$, the $A$-algebra $B$ is $I$-torsionfree \cite[Lemma~$3.5$]{bhatt_prisms_2022}, so
    $$\text{H}^0(\Prism_{S/A}) = \lim\limits_{(B,IB) \in (S/A)_\Prism} B$$
    is also $I$-torsionfree (\cite[Theorem~$4.3.6$]{bhatt_absolute_2022}) and the morphism $\text{H}^0(L\eta_I \Prism_{S/A}) \rightarrow \text{H}^0(\Prism_{S/A})$ is the identity. Assume $i$ is now a nonnegative integer for which the result holds. Using the equivalence $$\tilde{\phi} : \Prism^{(1)}_{S/A} \xlongrightarrow{\sim} L\eta_I \Prism_{S/A},$$ the first, second and fourth vertical maps of the previous diagram are isomorphisms, so the morphism $$\phi : \text{H}^i(\mathcal{N}^{\geq i}\Prism^{(1)}_{S/A}) \longrightarrow \text{H}^i(I^i\Prism_{S/A})$$ is also an isomorphism. For $n < i$, the Frobenius $\phi : \mathcal{N}^{\geq \star} \Prism^{(1)}_{S/A} \rightarrow I^\star \Prism_{S/A}$ induces a map of exact sequences
    
    $$\begin{tikzcd}[column sep = tiny]
    \hspace{-0.6cm}
    \text{H}^{n-1}(\mathcal{N}^{\geq i} \Prism^{(1)}_{S/A}) \ar[r] \ar[d,"\phi","\cong"'] & \text{H}^{n-1}(\mathcal{N}^{i}\Prism^{(1)}_{S/A}) \ar[r] \ar[d,"\phi","\cong"'] & \text{H}^{n}(\mathcal{N}^{\geq i+1} \Prism^{(1)}_{S/A}) \ar[r] \ar[d,"\phi"] & \text{H}^{n}(\mathcal{N}^{\geq i}\Prism^{(1)}_{S/A}) \ar[r] \ar[d,"\phi","\cong"'] & \text{H}^n(\mathcal{N}^{i}\Prism^{(1)}_{S/A}) \ar[d,"\phi","\cong"'] \\
    \hspace{-0.6cm}
    \text{H}^{n-1}(I^{i-1}\Prism_{S/A}) \ar[r] & \text{H}^{n-1}(\overline{\Prism}_{S/A}\{i\}) \ar[r] & \text{H}^n(I^{i+1} \Prism_{S/A}) \ar[r] & \text{H}^n(I^{i}\Prism_{S/A}) \ar[r] & \text{H}^n(\overline{\Prism}_{S/A}\{i\})
    \end{tikzcd}$$
    where the isomorphisms are given by Proposition~\ref{propositionpquasismoothproperties}\,$(2)$ and the induction hypothesis. It follows that the morphism $\phi : \text{H}^n(\mathcal{N}^{\geq i} \Prism^{(1)}_{S/A}) \rightarrow \text{H}^n(I^i \Prism_{S/A})$ is also an isomorphism, which concludes the proof. 
\end{proof}

\subsection{Comparison with $F$-smoothness}\label{subsectionFsmoothness}

\vspace{-\parindent}
\hspace{\parindent}

In this subsection we compare the absolute notion of $F$-smoothness, introduced by Bhatt--Mathew \cite{bhatt_syntomic_2022}, with our relative notion of $p$-Cartier smoothness, in the case when the base is perfectoid (Theorem~\ref{TheoremFsmoothequivalentCSmoverperfectoid}). $F$\nobreakdash-smoothness is a variant of ($p$\nobreakdash-adic) smoothness designed to capture smoothness in an absolute sense. For instance, regular rings are $F$\nobreakdash-smooth (\cite[Theorem~$4.15$]{bhatt_syntomic_2022}). 

Absolute prismatic cohomology can be defined using an absolute version of the relative prismatic site introduced in subsection \ref{subsectionreviewrelativeprismaticcohomology}. We recall only the necessary notation, and refer the reader to \cite{bhatt_absolute_2022} for the general theory. Following \cite[Section $4$]{bhatt_topological_2019} or \cite[Appendix $C$]{bhatt_absolute_2022}, a commutative ring $S$ is {\it $p$\nobreakdash-quasisyntomic} if $S$ has bounded $p^\infty$-torsion and $\mathbb{L}_{S/\Z} \otimes_{S}^{\mathbb{L}} S/p \in \mathcal{D}(S/p)$ has Tor-amplitude in~$[-1;0]$. Following \cite[Sections $4$ and $5$]{bhatt_absolute_2022} and for any $p$\nobreakdash-quasisyntomic ring $S$, the absolute prismatic site~$(S)_\Prism$ is the site having as objects the prisms $(B,J)$ with a map $S \rightarrow B/J$ and covers given by flat covers. The absolute prismatic complex $\Prism_S \in \mathcal{D}(\Z_p)$ is the cohomology of the sheaf $$\mathcal{O}_\Prism : (S)_\Prism \longrightarrow A\text{-Alg},\text{ } (B,J) \longmapsto B.$$ It is equipped with a Nygaard filtration $\mathcal{N}^{\geq \star} \Prism_S$ and a map of filtered complexes $$\phi : \mathcal{N}^{\geq \star} \Prism_S \longrightarrow \Prism_S^{[\star]},$$
where the filtered complex $\Prism_S^{[\star]}$ is an absolute version of the $I$-adic filtration on relative prismatic cohomology. This Frobenius map is compatible with the Frobenius map on relative prismatic cohomology when $S$ is defined over a base prism.

\begin{definition}[$F$-smoothness, \cite{bhatt_syntomic_2022}]
    A $p$\nobreakdash-quasisyntomic ring $S$ is \emph{$F$\nobreakdash-smooth} if for each integer~$i \geq 0$, the Nygaard filtration on $\Prism_S\{i\}$ is complete and the homotopy cofibre $\emph{hocofib}(\phi)$ of the Frobenius map 
    $$\phi : \mathcal{N}^i \Prism_S \longrightarrow \overline{\Prism}_S\{i\},$$
    where $\overline{\Prism}_S\{i\} := \emph{gr}^i\, \Prism_S^{[\star]}$,
    has $p$\nobreakdash-complete Tor-amplitude in degrees $\geq i+1$.
\end{definition}

Following \cite[Section $3$]{bhatt_prisms_2022}, a bounded prism $(A,I)$ is {\it perfect} if its Frobenius $\phi_A$ is an isomorphism. The functor $(A,I) \mapsto A/I$ induces an equivalence between the category of perfect prisms and the category of perfectoid rings (\cite[Theorem~$3.10$]{bhatt_prisms_2022}). Given a perfect prism $(A,I)$, a $p$\nobreakdash-quasisyntomic $A/I$-algebra $S$ and an integer $i \geq 0$, the canonical maps $$\Prism_S\{i\} \longrightarrow \Prism_{S/A}\{i\} \longrightarrow \Prism^{(1)}_{S/A}\{i\}$$ are equivalences and their composite can be refined into an equivalence of filtered complexes (\cite[Construction $5.6.1$ and Theorem~$5.6.2$]{bhatt_absolute_2022}) $$\mathcal{N}^{\geq \star} \Prism_S \{i\} \xlongrightarrow{\sim} \mathcal{N}^{\geq \star} \Prism^{(1)}_{S/A} \{i\}.$$

\begin{theorem}\label{TheoremFsmoothequivalentCSmoverperfectoid}
    Let $(A,I)$ be a perfect prism, and $S$ a $p$-discrete $A/I$-algebra. Assume that the ring $S$ is $p$\nobreakdash-quasisyntomic. Then $S$ is $F$\nobreakdash-smooth if and only if $S$ is $p$\nobreakdash-Cartier smooth over $A/I$.
\end{theorem}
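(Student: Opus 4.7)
The key input is the equivalence of filtered complexes $\mathcal{N}^{\geq \star}\Prism_S\{i\} \simeq \mathcal{N}^{\geq \star}\Prism^{(1)}_{S/A}\{i\}$ valid over the perfect prism $(A,I)$, recalled just before the statement. Under this identification, $F$-smoothness of $S$ is rewritten as the conjunction, for each $i \geq 0$, of (a) Nygaard-completeness of $\Prism^{(1)}_{S/A}\{i\}$, and (b) the $p$-complete Tor-amplitude bound $\geq i+1$ on the homotopy cofibre of the relative Frobenius $\phi : \mathcal{N}^i \Prism^{(1)}_{S/A} \to \overline{\Prism}_{S/A}\{i\}$. I will prove the two implications by matching these two conditions against a subset of the characterisations in Theorem~\ref{TheoremCartiersmoothMain}.

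For the direction $p$-Cartier smooth $\Rightarrow$ $F$-smooth, condition (a) is exactly the characterisation $(\Prism=\widehat{\Prism})$ of Theorem~\ref{TheoremCartiersmoothMain}. For (b), Proposition~\ref{propositionpquasismoothproperties}(2) (which requires only $p$-cotangent smoothness) identifies the map $\phi$ with the canonical inclusion $\tau^{\leq i}\overline{\Prism}_{S/A}\{i\} \hookrightarrow \overline{\Prism}_{S/A}\{i\}$, and the cofibre $\tau^{>i}\overline{\Prism}_{S/A}\{i\}$ inherits from the conjugate (equivalently, Postnikov) filtration a description with graded pieces $(\mathbb{L}^n_{S/(A/I)})^\wedge_p[-n]$ for $n > i$. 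These are $p$-completely flat and concentrated in degree $n \geq i+1$ by $p$-cotangent smoothness, which yields the desired Tor-amplitude bound.

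For the converse $F$-smooth $\Rightarrow$ $p$-Cartier smooth, condition (a) yields $(\Prism=\widehat{\Prism})$ directly, so by Theorem~\ref{TheoremCartiersmoothMain} it remains to establish $p$-cotangent smoothness of $S$ over $A/I$. Specialising (b) to $i=0$, the homotopy cofibre of $S \simeq \mathcal{N}^0\Prism^{(1)}_{S/A} \to \overline{\Prism}_{S/A}$ has $p$-complete Tor-amplitude in degrees $\geq 1$ and carries an exhaustive conjugate filtration whose lowest graded piece is $(\mathbb{L}_{S/(A/I)})^\wedge_p[-1]$. Combining the Tor-amplitude bound on the cofibre with the $p$-quasisyntomic hypothesis (which forces $\mathbb{L}_{(S/p)/(A/(p,I))}$ to have Tor-amplitude in $[-1,0]$), one isolates the contribution of this bottom graded piece to argue that $\mathbb{L}_{(S/p)/(A/(p,I))}$ must lie in Tor-amplitude $[0,0]$, i.e., that $S$ is $p$-cotangent smooth over $A/I$.

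The main obstacle will be this last extraction: the conjugate spectral sequence a priori allows cancellations between graded pieces of different conjugate index, so one has to carefully argue that the bottom graded piece $(\mathbb{L}_{S/(A/I)})^\wedge_p[-1]$ itself satisfies the required amplitude bound. A likely route is to run an induction on the conjugate index, peeling off one graded piece at a time, and to exploit the $F$-smoothness bound at varying levels $i$ in concert with the $p$-quasisyntomic control on the higher wedge powers $(\mathbb{L}^n_{S/(A/I)})^\wedge_p$. Once $p$-cotangent smoothness is established, the proof concludes cleanly by invoking the equivalence of $(\Prism=\widehat{\Prism})$ with $(CSm)$ in Theorem~\ref{TheoremCartiersmoothMain}.
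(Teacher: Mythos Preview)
Your forward direction is correct and matches the paper's argument exactly.

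For the reverse direction, you correctly identify that Nygaard completeness gives $(\Prism=\widehat{\Prism})$, and that the remaining task is to extract $p$-cotangent smoothness from condition (b). However, your proposed extraction---using only $i=0$ and then running an induction over the conjugate filtration with $p$-quasisyntomic control on higher wedge powers---is vague at the crucial point and more complicated than necessary. The paper resolves this cleanly using only $i=0$ and $i=1$, with no induction.

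The key step you are missing is this: condition (b) at $i=1$ says the cofibre of $\phi : \mathcal{N}^1\Prism^{(1)}_{S/A} \to \overline{\Prism}_{S/A}\{1\}$ has $p$-complete Tor-amplitude in degrees $\geq 2$, and in particular lies in $\mathcal{D}^{\geq 2}$. Since $\mathcal{N}^1\Prism^{(1)}_{S/A} \in \mathcal{D}^{\leq 1}$ by Theorem~\ref{TheoremBS19qsyn}(1) (valid for \emph{any} $A/I$-algebra---no cotangent smoothness assumed), the map $\phi$ identifies $\mathcal{N}^1\Prism^{(1)}_{S/A} \simeq \text{Fil}^{\text{conj}}_1\,\overline{\Prism}_{S/A}\{1\}$ with $\tau^{\leq 1}\overline{\Prism}_{S/A}\{1\}$. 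Using $\text{gr}^{\text{conj}}_0 = \text{H}^0$, this forces $\text{gr}^{\text{conj}}_1 = (\mathbb{L}_{S/(A/I)})^\wedge_p[-1]$ to equal $\text{H}^1(\overline{\Prism}_{S/A}\{1\})[-1]$, so $(\mathbb{L}_{S/(A/I)})^\wedge_p$ is already discrete. Now the cofibres for $i=0$ and $i=1$ are identified with $\tau^{\geq 1}\overline{\Prism}_{S/A}$ and $\tau^{\geq 2}\overline{\Prism}_{S/A}$ (up to twist), with $p$-complete Tor-amplitude $\geq 1$ and $\geq 2$ respectively; their fibre $\text{H}^1[-1]$ therefore has $p$-complete Tor-amplitude $\geq 1$, giving $p$-complete flatness of $(\mathbb{L}_{S/(A/I)})^\wedge_p$.

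This route bypasses both your proposed induction and the need to bound $\mathbb{L}_{(S/p)/(A/(p,I))}$ from above via the absolute $p$-quasisyntomic hypothesis on $S$---a step which, in your sketch, would require the additional input that $\mathbb{L}_{(A/I)/\Z_p}$ has $p$-complete Tor-amplitude exactly in degree $-1$.
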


\begin{proof}
    Assume first that $S$ is $p$\nobreakdash-Cartier smooth over $A/I$. Let $i \geq 0$ be an integer. The Frobenius map $$\phi : \mathcal{N}^i \Prism_S \longrightarrow \overline{\Prism}_S\{i\}$$ is naturally identified with the Frobenius map 
    $$\phi : \mathcal{N}^i \Prism^{(1)}_{S/A} \longrightarrow \overline{\Prism}_{S/A}\{i\},$$
    whose homotopy cofibre is $\tau^{\geq i+1} \overline{\Prism}_{S/A}\{i\}$ by Theorem~\ref{propositionpquasismoothproperties}\,$(2)$. For all $n\geq i+1$, the cohomology groups $\text{H}^n(\tau^{\geq i+1} \overline{\Prism}_{S/A}\{i\})$ are canonically isomorphic to $(\Omega^n_{S/(A/I)})^\wedge_p$ by the Hodge\nobreakdash--Tate comparison (Proposition~\ref{propositionpquasismoothproperties}\,$(3)$). By $p$-cotangent smoothness of the $A/I$-algebra $S$ the $S$\nobreakdash-modules $(\Omega^n_{S/(A/I)})^\wedge_p$ are $p$-completely flat for all $n$, hence the complex $\tau^{\geq i+1} \overline{\Prism}_{S/A}$ has $p$-complete Tor-amplitude in degrees~\hbox{$\geq i+1$}. Moreover, the Nygaard filtration on $\Prism_S\{i\}$ is canonically identified with the Nygaard filtration on $\Prism_{S/A}^{(1)}\{i\}$ (\cite[Theorem~$5.6.2$]{bhatt_absolute_2022}), where the Nygaard filtration on $\Prism_{S/A}^{(1)}\{i\}$ is defined as the tensor product of the Nygaard filtration on $\Prism_{S/A}^{(1)}$ with the invertible $A$-module $A\{i\}$ (\cite[Construction~$5.6.1$]{bhatt_absolute_2022}). In particular the Nygaard filtration on $\Prism_S\{i\}$ is complete, by Theorem~\ref{TheoremCartiersmoothMain}\,$(\Prism=\widehat{\Prism})$.
    
    Assume now that $S$ is $F$\nobreakdash-smooth. In particular the Frobenius map $$\phi : \mathcal{N}^1 \Prism^{(1)}_{S/A} \longrightarrow \overline{\Prism}_{S/A}\{1\}$$ has homotopy cofibre in degrees $\geq 2$, and $\mathcal{N}^1 \Prism^{(1)}_{S/A}$ is in degrees $\leq 1$ (Theorem~\ref{TheoremBS19qsyn}\,$(1)$), so there is an equivalence $$\tilde{\phi} : \mathcal{N}^1 \Prism^{(1)}_{S/A} \longrightarrow \tau^{\leq 1} \overline{\Prism}_{S/A}\{1\}.$$ The Frobenius factors through the equivalence $$\tilde{\phi} : \mathcal{N}^1\Prism^{(1)}_{S/A} \xlongrightarrow{\sim} \text{Fil}_1^{\text{conj}}\, \overline{\Prism}_{S/A}\{1\}$$ of Theorem~\ref{TheoremBS19qsyn}\,$(1)$, so the canonical map $$\text{Fil}^\text{conj}_1\, \overline{\Prism}_{S/A}\{1\} \xlongrightarrow{\sim} \tau^{\leq 1} \overline{\Prism}_{S/A}\{1\}$$ is an equivalence. Moreover, $\text{gr}^\text{conj}_0\,\overline{\Prism}_{S/A}\{1\}$ is naturally identified with $\text{H}^0(\overline{\Prism}_{S/A}\{1\})$ by the Hodge\nobreakdash--Tate comparison. So, using the Hodge--Tate comparison $\text{gr}^\text{conj}_1\,\overline{\Prism}_{S/A}\{1\} \simeq (\mathbb{L}_{S/(A/I)})^\wedge_p[-1]$, the cotangent complex $(\mathbb{L}_{S/(A/I)})^\wedge_p$ is in degree $0$ and is naturally identified with $\text{H}^1(\overline{\Prism}_{S/A}\{1\})$. Using the $F$\nobreakdash-smoothness condition for $i=0,1$, the complexes $\tau^{\geq 1}\overline{\Prism}_{S/A}\{1\} := \tau^{\geq 1} \overline{\Prism}_{S/A} \otimes_{A/I} I/I^2$ and $\tau^{\geq 2} \overline{\Prism}_{S/A}\{1\}$ have $p$\nobreakdash-complete Tor-amplitude in degrees $\geq 1$ and $\geq 2$ respectively. So $\text{H}^1(\overline{\Prism}_{S/A}\{1\})[-1]$, as the homotopy cofibre of the morphism $$\tau^{\geq 1} \overline{\Prism}_{S/A}\{1\} \longrightarrow \tau^{\geq 2} \overline{\Prism}_{S/A}\{1\},$$ has $p$\nobreakdash-complete Tor-amplitude in degrees $\geq 1$. In particular the cotangent complex $(\mathbb{L}_{S/(A/I)})^\wedge_p$ has $p$\nobreakdash-complete Tor-amplitude in degrees $\geq 0$, or equivalently it is a $p$\nobreakdash-completely flat $S$-module in degree~$0$. So the morphism $A/I \rightarrow S$ is $p$\nobreakdash-cotangent smooth. The Nygaard filtration on $\Prism_{S/A}^{(1)}$ is naturally identified with the Nygaard filtration on $\Prism_S$ (\cite[Theorem~$5.6.2$]{bhatt_absolute_2022}), and is thus complete. So $S$ is $p$\nobreakdash-Cartier smooth over $A/I$ by Theorem~\ref{TheoremCartiersmoothMain}\,$(\Prism=\widehat{\Prism})\Rightarrow(CSm)$.
\end{proof}

\begin{remark}
    In general, given a $p$\nobreakdash-discrete morphism $R \rightarrow S$ of $p$\nobreakdash-quasisyntomic rings, the notions of $F$\nobreakdash-smoothness for $S$ and $p$\nobreakdash-Cartier smoothness for $R\rightarrow S$ do not agree. For instance, the ring $\Z_p[p^{1/p}]$ is regular noetherian and is thus $F$\nobreakdash-smooth (\cite[Theorem~$4.15$]{bhatt_syntomic_2022}), but the morphism $\Z_p \rightarrow \Z_p[p^{1/p}]$ is not $p$\nobreakdash-Cartier smooth (it is not $p$\nobreakdash-cotangent smooth). On the other hand, the identity endomorphism of $R$ is always $p$\nobreakdash-Cartier smooth, but not all $p$-quasisyntomic rings $R$ are $F$\nobreakdash-smooth: any $F$\nobreakdash-smooth $p$\nobreakdash-complete noetherian ring is regular (\cite[Theorem~$4.15$]{bhatt_syntomic_2022}). If $S$ is a $p$\nobreakdash-Cartier smooth $\Z_p$-algebra, then $S$ is $F$\nobreakdash-smooth (\cite[Corollary~$4.17$]{bhatt_syntomic_2022}).
\end{remark}

\begin{remark}
    Let $(A,I)$ be a bounded prism, and $S$ a $p$\nobreakdash-quasisyntomic $A/I$-algebra. Then $S$ is $p$\nobreakdash-Cartier smooth over $A/I$ if and only if it satisfies the following relative version of $F$\nobreakdash-smoothness: for each integer $i \geq 0$ (or equivalently $i \in \{0, 1\}$), the homotopy cofibre of the Frobenius map \hbox{$\phi : \mathcal{N}^i \Prism^{(1)}_{S/A} \rightarrow \overline{\Prism}_{S/A}\{i\}$} has $p$\nobreakdash-complete Tor-amplitude in degrees $\geq i+1$ and the Nygaard filtration on~$\Prism^{(1)}_{S/A}$ is complete.
\end{remark}

\needspace{5\baselineskip}

\section{Valuation rings are Cartier smooth}\label{sectionvaluationrings}

\vspace{-\parindent}
\hspace{\parindent}

Recall that a valuation ring is an integral domain $C^+$ such that for any elements $f$ and $g$ in $C^+$, either $f\in gC^+$ or $g\in fC^+$. The valuation on the fraction field $C$ of $C^+$ is defined as the canonical group homomorphism $v : C^\times \twoheadrightarrow C^\times/(C^+)^\times =: \Gamma_C$, where $\Gamma_C$ is naturally equipped with a structure of totally ordered abelian group, and is called the value group of $C$. The value group $\Gamma_C$ (resp. the valuation ring $C^+$) is said to be discrete if it is isomorphic to the ring of integers $\Z$. The rank of a valuation ring is defined as its number of nonzero prime ideals. In particular, a valuation ring $C^+$ has rank at most $1$ if and only if its value group $\Gamma_C$ can be embedded in the ordered group of real numbers~$\R$. 
A valuation ring extension $E^+$ of $C^+$ is a valuation ring $E^+$ equipped with a flat ring morphism $C^+\rightarrow E^+$. The flat modules over a valuation ring $C^+$ are exactly the torsionfree $C^+$-modules, so a morphism $C^+ \rightarrow E^+$ of valuation rings is flat if and only if it is injective. 

Valuation rings arise in various contexts in $p$\nobreakdash-adic geometry, {\it e.g.}, in \cite{bhatt_projectivity_2017,huber_adic_1996,scholze_perfectoid_2012,bhatt_arc-topology_2021}. The goal of this section is to prove the following result.

\begin{theorem}\label{mainthmvaluationrings}
    Let $C^+$ be a valuation ring whose $p$\nobreakdash-adic completion is a perfectoid ring.
    Let $E^+$ be a valuation ring extension of $C^+$. Then the morphism $C^+ \rightarrow E^+$ is $p$\nobreakdash-Cartier smooth.
\end{theorem}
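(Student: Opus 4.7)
The plan is to verify the two conditions defining $p$-Cartier smoothness (Definition \ref{DefinitionCartiersmooth}(2)): $p$-cotangent smoothness of $C^+ \to E^+$, and the Cartier isomorphism for the reduction $C^+/p \to E^+/p$. The $p$-discreteness part is automatic, since a valuation ring extension is injective and hence flat (torsionfree modules over a valuation ring are flat), so that $E^+ \otimes_{C^+}^{\mathbb{L}} C^+/p \simeq E^+/p$ sits in degree zero. The cotangent smoothness of the reduction follows from Gabber--Ramero \cite[Theorem~$6.5.8(ii)$]{gabber_almost_2003} combined with the separate Proposition~\ref{Propositionquasismoothnessvaluationrings} alluded to in Example~\ref{Examplequasismoothness}\,$(4)$. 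The remaining and main content is then the Cartier isomorphism for $C^+/p \to E^+/p$, for which I would split the argument into two cases.

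If $p = 0$ in $C^+$, then $\widehat{C^+} = C^+$ is a perfectoid $\mathbb{F}_p$-algebra, hence a perfect valuation ring. By Gabber's theorem \cite[Corollary~$A.4$]{kerz_towards_2021}, the morphism $\mathbb{F}_p \to E^+$ is Cartier smooth. Since $C^+$ is perfect, $dc = 0$ for every $c \in C^+$, so the natural map $\Omega^\bullet_{E^+/\mathbb{F}_p} \to \Omega^\bullet_{E^+/C^+}$ is an isomorphism of de Rham complexes; moreover the Frobenius twist along $C^+$ is compatible with the one along $\mathbb{F}_p$, since the Frobenius of $C^+$ is an isomorphism. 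The Cartier isomorphism for $C^+ \to E^+$ therefore follows immediately from the one over $\mathbb{F}_p$.

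In the mixed characteristic case, the strategy is to reduce to the preceding situation by deformation. Let $\widetilde{C} := (\widehat{C^+})^\flat$ be the tilt of the perfectoid completion: this is a perfect valuation ring in characteristic $p$, fitting into a canonical surjection $\widetilde{C} \twoheadrightarrow \widehat{C^+}/p = C^+/p$ whose kernel is generated by a nonzerodivisor $\varpi \in \widetilde{C}$ (the tilt of $p$). The deformation-theoretic heart of the proof is to construct a characteristic-$p$ valuation ring extension $\widetilde{C} \to \widetilde{E}$ such that $\widetilde{E}/\varpi \cong E^+/p$ as $C^+/p$-algebras. Granted such a lift, the preceding (equal-characteristic-$p$) case applied to $\widetilde{C} \to \widetilde{E}$ shows that this extension is Cartier smooth, and Lemma~\ref{LemmabasechangeforCartiersmoothness} (base change for Cartier smoothness), applied along the surjection $\widetilde{C} \twoheadrightarrow C^+/p$, yields the Cartier smoothness of
$$C^+/p \longrightarrow \widetilde{E} \otimes_{\widetilde{C}} C^+/p = \widetilde{E}/\varpi \cong E^+/p,$$
which is what we need.

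The principal obstacle is thus constructing the lift $\widetilde{E}$: one must produce a valuation ring in characteristic $p$, flat over the perfect valuation ring $\widetilde{C}$, with prescribed reduction modulo the nonzerodivisor $\varpi$. A natural approach is to lift the valuation on the fraction field of $E^+$ to a valuation on a suitable characteristic-$p$ extension of $\mathrm{Frac}(\widetilde{C})$, exploiting the fact that the residue field and value group are insensitive both to $p$-adic completion and to the tilting equivalence, and then to take the corresponding valuation ring. The delicate point will be to ensure that the resulting $\widetilde{C}$-algebra is genuinely flat over $\widetilde{C}$ and that its reduction modulo $\varpi$ recovers $E^+/p$ compatibly with its $C^+/p$-structure; once this is arranged, the rest of the argument is essentially formal, tying together the Gabber--Ramero structure on cotangent complexes of valuation ring extensions with Gabber's Cartier isomorphism in characteristic $p$.
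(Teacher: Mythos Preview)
Your overall strategy is exactly that of the paper: reduce everything to the Cartier isomorphism for $C^+/p \to E^+/p$, handle the equal-characteristic-$p$ case via Gabber and Gabber--Ramero, and in mixed characteristic produce a characteristic-$p$ valuation ring extension of $C^{+\flat}$ lifting $E^+/p$ along $C^{+\flat}\twoheadrightarrow C^+/p$, then invoke base change (Lemma~\ref{LemmabasechangeforCartiersmoothness}). The gap is precisely where you say the ``delicate point'' lies: you do not actually construct the lift $\widetilde{E}$, and your suggested route --- transporting the valuation on $E$ to a valuation on some extension of $\mathrm{Frac}(C^{+\flat})$ using that residue fields and value groups are preserved under completion and tilting --- does not produce a ring with prescribed reduction $E^+/p$. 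Tilting only applies to perfectoid objects, and $E^+$ is not perfectoid; knowing the value group and residue field of a valuation ring does not determine its reduction modulo $p$ as a $C^+/p$-algebra.

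The paper builds $\widetilde{E}$ by a completely different mechanism, namely Illusie's deformation theory (Theorem~\ref{TheoremIllusieondeformations}). This requires two preliminary reductions you have omitted: first to field extensions $C\to E$ of finite type (by writing $E$ as a filtered union and using that the inverse Cartier map commutes with filtered colimits), and then to valuation rings $E^+$ of finite rank (via a Milnor square argument localising at $\sqrt{(p)}$, see the end of the proof). Under these hypotheses, Lemma~\ref{lemmacgvaluationring} and Corollary~\ref{Corollaryalmostprojective} show that $\Omega^1_{(E^+/p)/(C^+/p)}$ has projective dimension at most $1$, so the obstruction class in $\mathrm{Ext}^2$ vanishes and one can inductively lift $E^+/p$ to flat $C^{+\flat}/d^n$-algebras $E_n^{+\flat}$; their inverse limit $E^{+\flat}$ is $d$-complete and $d$-torsionfree, and a separate argument (Lemma~\ref{Lemmaflatimpliesvaluationring}) shows that any such flat lift is automatically a valuation ring. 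None of these steps is visible in your sketch, and the projective-dimension input (which is why the finite-type and finite-rank reductions are needed) is the genuine technical heart of the construction.
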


\begin{corollary}
    Let $C^+$ be a $p$-torsionfree valuation ring whose $p$-adic completion is a perfectoid ring, and $E^+$ be a valuation ring extension of $C^+$. Then for any integers $i\geq 0$ and $n\geq 1$, the map $$\Z/p^n(i)^\emph{syn}(E^+) \longrightarrow R\Gamma_{\emph{ét}}(\emph{Spec}(E^+[\tfrac{1}{p}]),\mu_{p^n}^{\otimes i})$$ is an isomorphism on cohomology in degrees $<i$, and is injective on $\emph{H}^i$.
\end{corollary}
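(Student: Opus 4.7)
The plan is to combine Theorem~\ref{mainthmvaluationrings}, which supplies the $p$\nobreakdash-Cartier smoothness of $C^+\rightarrow E^+$, with the $p$\nobreakdash-torsionfree case of the syntomic-étale comparison Theorem~\ref{Theoremsyntomicetalecomparisonintro}, applied after passing to $p$\nobreakdash-adic completions so that the base becomes perfectoid.

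First I would observe that by Theorem~\ref{mainthmvaluationrings} the morphism $C^+\rightarrow E^+$ is $p$\nobreakdash-Cartier smooth, hence by the remark following Definition~\ref{DefinitionCartiersmooth} the induced morphism on $p$\nobreakdash-adic completions $\widehat{C^+}\rightarrow \widehat{E^+}$ is also $p$\nobreakdash-Cartier smooth. By hypothesis $\widehat{C^+}$ is a $p$\nobreakdash-torsionfree perfectoid ring, so Theorem~\ref{Theoremsyntomicetalecomparisonintro} applies to the pair $(\widehat{C^+},\widehat{E^+})$ and yields that the comparison map
$$\Z/p^n(i)^\emph{syn}(\widehat{E^+}) \longrightarrow R\Gamma_{\emph{ét}}(\emph{Spec}(\widehat{E^+}[\tfrac{1}{p}]),\mu_{p^n}^{\otimes i})$$
is an isomorphism in degrees $<i$ and injective on $\text{H}^i$.

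The remaining task is to identify both sides of this comparison with the corresponding objects for $E^+$. For the syntomic side, $\Z/p^n(i)^\text{syn}(-)$ is built from $p$\nobreakdash-adically complete prismatic data, so the natural map $\Z/p^n(i)^\text{syn}(E^+)\rightarrow \Z/p^n(i)^\text{syn}(\widehat{E^+})$ is an equivalence by a formal $p$\nobreakdash-completeness argument. For the étale side we need to see that the canonical map
$$R\Gamma_{\emph{ét}}(\emph{Spec}(E^+[\tfrac{1}{p}]),\mu_{p^n}^{\otimes i}) \longrightarrow R\Gamma_{\emph{ét}}(\emph{Spec}(\widehat{E^+}[\tfrac{1}{p}]),\mu_{p^n}^{\otimes i})$$
is an equivalence; this would follow from the henselianness of the pair $(\widehat{E^+}, p\widehat{E^+})$ and a Gabber-style rigidity/comparison argument between the generic fibres of $E^+$ and of its $p$\nobreakdash-adic completion.

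The main obstacle is this last descent step on the étale side: unlike the syntomic complex, étale cohomology of the generic fibre is not obviously a $p$\nobreakdash-adic invariant, and one must invoke a rigidity result (or the fact that both invariants are controlled by the common henselisation along $p$) to conclude the naturality of the identifications. Once the two identifications above are established, the conclusion follows immediately from the comparison proved for $\widehat{E^+}$ by combining them with Theorem~\ref{Theoremsyntomicetalecomparisonintro}.
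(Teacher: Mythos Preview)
Your overall strategy---pass to $p$-adic completions so that the base becomes an honest perfectoid ring and then invoke the comparison theorem---is exactly what underlies the paper's one-line proof (which simply cites Theorems~\ref{mainthmvaluationrings} and~\ref{Theoremsyntomicetalecomparison2}, the reduction to $p$-complete $S$ being already built into the proof of the latter). However, your execution contains a genuine gap.

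Your claim that $\Z/p^n(i)^{\text{syn}}(E^+)\to \Z/p^n(i)^{\text{syn}}(\widehat{E^+})$ is an equivalence ``by a formal $p$-completeness argument'' is incorrect under the definition of syntomic cohomology used here. For a ring $S$ that is not $p$-complete, \cite[Section~8.4]{bhatt_absolute_2022} defines $\Z/p^n(i)^{\text{syn}}(S)$ via the cartesian square
\[
\begin{tikzcd}
\Z/p^n(i)^{\text{syn}}(S) \ar[r]\ar[d] & R\Gamma_{\text{\'et}}(\text{Spec}(S[\tfrac{1}{p}]),\mu_{p^n}^{\otimes i}) \ar[d]\\
\Z/p^n(i)^{\text{syn}}(\widehat{S}) \ar[r] & R\Gamma_{\text{\'et}}(\text{Spec}(\widehat{S}[\tfrac{1}{p}]),\mu_{p^n}^{\otimes i}),
\end{tikzcd}
\]
so $\Z/p^n(i)^{\text{syn}}(S)$ is \emph{not} built purely from $p$-complete prismatic data: it also sees the \'etale cohomology of the uncompleted generic fibre. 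In particular the left vertical map is an equivalence if and only if the right one is, and neither need be (take e.g.\ $S=\Z_{(p)}$ and $i=0$).

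The fix is that you do not need to identify either side separately. The statement you want concerns only the homotopy cofibre of the horizontal comparison map, and the cartesian square above says precisely that this cofibre is the \emph{same} for $E^+$ and for $\widehat{E^+}$. So apply Theorem~\ref{Theoremsyntomicetalecomparison2} with $R=\widehat{C^+}$ and $S=\widehat{E^+}$ (which is $p$-Cartier smooth over $\widehat{C^+}$ by Theorem~\ref{mainthmvaluationrings} and the remark after Definition~\ref{DefinitionCartiersmooth}), conclude that the cofibre for $\widehat{E^+}$ lies in degrees $\geq i$, and transport this to $E^+$ via the square. This is exactly the reduction performed at the start of the proof of Theorem~\ref{Theoremsyntomicetalecomparison}, and it dissolves the \'etale-side obstacle you flagged: no rigidity argument is needed.
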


\begin{proof}
    This is a direct consequence of Theorems~\ref{mainthmvaluationrings} and \ref{Theoremsyntomicetalecomparison2}.
\end{proof}

Examples of valuation rings $C^+$ satisfying the hypothesis of Theorem~\ref{mainthmvaluationrings} include the ring of integers $\overline{\Z}_p$ of an algebraic closure $\overline{\Q}_p$ of $\Q_p$, the ring of integers $\mathcal{O}_{\mathbb{C}_p}$ of the $p$\nobreakdash-adic complex numbers $\mathbb{C}_p$ and the ring $\Z_p[p^\infty]$ (Example \ref{Exampleperfectoidvaluationring} below).

\begin{remark}\label{RemarkvaluationringssufficesmodpCSm}
    By Example~\ref{examplepdiscrete}\,$(2)$, an extension of valuation rings is $p$-discrete. In particular, an extension of valuation rings $C^+ \rightarrow E^+$ is $p$\nobreakdash-Cartier smooth if and only if the morphism $C^+/p \rightarrow E^+/p$ is Cartier smooth, {\it i.e.}, if $C^+/p \rightarrow E^+/p$ satisfies the Cartier isomorphism and the cotangent complex $\mathbb{L}_{(E^+/p)/(C^+/p)}$ is a flat $(E^+/p)$-module in degree $0$.
\end{remark}

We will distinguish three cases. If $p$ is invertible in the valuation ring $C^+$, then the rings $C^+/p$ and $E^+/p$ are zero\footnote{There is a slight ambiguity whether the zero ring is perfectoid or not. We include this case to avoid any ambiguity.} and the result is trivial. If $p$ is zero in the valuation ring $C^+$, the result is essentially due to Gabber\nobreakdash--Ramero and Gabber, as we recall now. We will then focus on the remaining case, \hbox{{\it i.e.}, that} of mixed-characteristic.

Assume that $p=0$ in the valuation ring $C^+$. By \cite[Example $3.15$]{bhatt_integral_2018} an $\F_p$-algebra, or equivalently its $p$\nobreakdash-adic completion, is a perfectoid ring if and only if it is perfect ({\it i.e.}, its Frobenius endomorphism is an isomorphism). Theorem~\ref{mainthmvaluationrings} is then due to Gabber--Ramero and Gabber. Remark that Cartier smoothness in characteristic $p$, including its relation to valuation rings and algebraic $K$\nobreakdash-theory, was previously studied in \cite{kelly_k-theory_2021}.

\begin{theorem}[Cartier smoothness of characteristic $p$ valuation rings, \cite{gabber_almost_2003,kerz_towards_2021}]\label{TheoremGabberandGabberRamero}
    Let $C^+$ be a perfect valuation ring of characteristic $p$, and $E^+$ a valuation ring extension of $C^+$. Then the morphism $C^+ \rightarrow E^+$ is Cartier smooth. Equivalently:
    \begin{enumerate}
        \item The cotangent complex $\mathbb{L}_{E^+/C^+}$ is concentrated in degree $0$, and $\Omega^1_{E^+/C^+}$ is a flat $E^+$-module.
        \item The inverse Cartier map $$C^{-1} : \Omega^n_{E^+/C^+} \otimes_{C^+,\phi_{C^+}} C^+ \longrightarrow \emph{H}^n(\Omega^{\bullet}_{E^+/C^+})$$ is an isomorphism of $C^+$-modules for each $n\geq 0$.
    \end{enumerate}
\end{theorem}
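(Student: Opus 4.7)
The statement comprises two equivalent assertions (Definition~\ref{DefinitionCartiersmooth}): the cotangent-complex property, and the inverse Cartier isomorphism. I would handle each separately.

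For the cotangent-complex property, I would follow the Gabber--Ramero strategy of writing $E^+$ as a filtered colimit of its $C^+$-subalgebras $A \subseteq E^+$ which are \emph{essentially smooth} over $C^+$, i.e., localizations of finitely generated smooth $C^+$-algebras. The nontrivial input is producing, for any finite collection of elements of $E^+$, such a subring $A$ containing them; this uses that every finitely generated ideal of a valuation ring is principal and totally ordered by divisibility, together with a careful analysis of finitely generated subextensions. Each essentially smooth $A$ has $\mathbb{L}_{A/C^+}$ concentrated in degree $0$ with flat $\Omega^1_{A/C^+}$, since smoothness gives this and localization preserves it via $\mathbb{L}_{A[S^{-1}]/C^+} \simeq A[S^{-1}] \otimes_A \mathbb{L}_{A/C^+}$. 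Since the cotangent complex commutes with filtered colimits (compare Example~\ref{Examplequasismoothness}(3)), the property propagates to $E^+$.

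For the inverse Cartier isomorphism, I would reduce to the case $C^+ = \F_p$. Because $C^+$ is perfect of characteristic $p$, one has $\mathbb{L}_{C^+/\F_p} = 0$, so the transitivity triangle yields $\mathbb{L}_{E^+/\F_p} \simeq \mathbb{L}_{E^+/C^+}$ and consequently $\Omega^n_{E^+/C^+} = \Omega^n_{E^+/\F_p}$ for all $n$; thus the two de Rham complexes coincide. Moreover, $\phi_{C^+}$ is an automorphism, so the Frobenius twist $\Omega^n_{E^+/C^+} \otimes_{C^+, \phi_{C^+}} C^+$ in the source of $C^{-1}$ is canonically identified with $\Omega^n_{E^+/C^+}$. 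These identifications reduce the inverse Cartier isomorphism for $C^+ \to E^+$ to the corresponding statement for the absolute morphism $\F_p \to E^+$ with $E^+$ an arbitrary valuation ring of characteristic $p$, which is Gabber's theorem.

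The main obstacle is thus Gabber's theorem itself. Since valuation rings are neither noetherian nor smooth in general, the classical proof of the Cartier isomorphism (via Frobenius lifts, cf.\ \cite{deligne_relevements_1987}) does not apply, and one cannot invoke resolution of singularities in characteristic $p$ (which is not known). Gabber's argument, reproduced in the appendix of \cite{kerz_towards_2021}, instead exploits two structural features of a characteristic-$p$ valuation ring $E^+$: every finitely generated ideal is principal, and $\phi_{E^+}$ is injective because $E^+$ is a domain. Using these, injectivity and surjectivity of $C^{-1}$ are established separately via a delicate argument that reduces, by filtration along the value group, to explicit computations on simple one-variable extensions.
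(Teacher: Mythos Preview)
Your reduction in part~(2) to the absolute case $\F_p \to E^+$ via the transitivity triangle and the vanishing of $\mathbb{L}_{C^+/\F_p}$ is exactly what the paper does, and your handling of the Frobenius twist is correct.

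Part~(1), however, has a genuine gap. You propose to exhibit $E^+$ as a filtered colimit of essentially smooth $C^+$-subalgebras and attribute this strategy to Gabber--Ramero, but that is not what they prove, and the statement itself is not known in characteristic~$p$: it is essentially local uniformization for valuation rings, which would follow from resolution of singularities. The introduction of the present paper makes precisely this point---that valuation rings \emph{should} be filtered colimits of smooth $\F_p$-algebras if one assumes resolution, and that Gabber's contribution is to prove the Cartier isomorphism \emph{without} that assumption. The elementary valuation-theoretic facts you invoke (principal finitely generated ideals, total divisibility order) let you produce finitely generated $C^+$-subalgebras containing any prescribed finite set, but give no mechanism whatsoever to force those subalgebras to be smooth.

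What Gabber--Ramero actually do, and what the paper cites, is a direct analysis of the cotangent complex: \cite[Theorem~$6.5.8\,(ii)$]{gabber_almost_2003} shows that $\mathbb{L}_{E^+/C^+}$ is concentrated in degree~$0$ for any extension of valuation rings, and \cite[Corollary~$6.5.21$]{gabber_almost_2003} shows that $\Omega^1_{E^+/\F_p}$ is a torsionfree $E^+$-module, hence flat over the valuation ring $E^+$. The paper then transports flatness from $\Omega^1_{E^+/\F_p}$ to $\Omega^1_{E^+/C^+}$ via the same transitivity isomorphism you use in part~(2). So the correct route for~(1) is to cite these two results and combine them with the transitivity sequence, not to attempt an ind-smooth presentation.
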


\begin{proof}
    $(1)$ The cotangent complexes $\mathbb{L}_{E^+/C^+}$ and $\mathbb{L}_{E^+/\F_p}$ are concentrated in degree $0$ (\cite[Theorem~$6.5.8\,(ii)$]{gabber_almost_2003}) where they are given by $\Omega^1_{E^+/C^+}$ and $\Omega^1_{E^+/\F_p}$, and $\Omega^1_{E^+/\F_p}$ is a torsionfree $E^+$-module (\cite[Corollary~$6.5.21$]{gabber_almost_2003}). There is a homotopy transitivity fibre sequence $$\mathbb{L}_{C^+/\F_p}\otimes^\mathbb{L}_{C^+}E^+ \longrightarrow \mathbb{L}_{E^+/\F_p} \longrightarrow \mathbb{L}_{E^+/C^+},$$
    and $\mathbb{L}_{C^+/\F_p} \simeq 0$ because $C^+$ is a perfect $\F_p$-algebra. So the natural morphism $$\Omega^1_{E^+/\F_p} \longrightarrow \Omega^1_{E^+/C^+}$$ is an isomorphism of $E^+$-modules. Torsionfree modules over a valuation ring are flat, so $\Omega^1_{E^+/C^+}$ is a flat $E^+$-module.
    
    $(2)$ The morphism $\F_p \rightarrow E^+$ satisfies the Cartier isomorphism (\cite[Corollary~$A.4$]{kerz_towards_2021}). By the previous paragraph, there is a canonical isomorphism $$\Omega^1_{E^+/\F_p} \xlongrightarrow{\cong} \Omega^1_{E^+/C^+}$$ of $E^+$-modules, so the morphism $C^+\rightarrow E^+$ also satisfies the Cartier isomorphism.
\end{proof}

Assume now, for the rest of this section, that $p$ is neither invertible nor zero in the valuation ring~$C^+$; we say in this case that $C^+$ is a {\it mixed-characteristic} valuation ring. The hypothesis on $C^+$ in Theorem~\ref{mainthmvaluationrings} can be reformulated as follows.

\begin{lemma}\label{lemmacharacterisationperfectoidvaluationring}
    Let $C^+$ be a mixed-characteristic valuation ring. The following are equivalent:
    \begin{enumerate}
        \item The $p$\nobreakdash-adic completion of the valuation ring $C^+$ is a perfectoid ring.
        \item The ring $C^+/p$ has a nonzero nilpotent element, and its Frobenius endomorphism is surjective.
    \end{enumerate}
\end{lemma}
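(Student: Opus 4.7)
The plan is to reduce to the $p$-adically complete case (since both conditions depend only on the $p$-adic completion, as $C^+/p = \widehat{C^+}/p$ and perfectoidness is a condition on the completion) and then check both directions using the definition of perfectoid ring from Bhatt--Morrow--Scholze. The key feature of the valuation ring setting is that in a valuation ring, extracting a $p$-th root in the sense of Frobenius surjectivity has a rigid effect on valuations: if $0 < v(x) < v(p)$ and $y^p \equiv x \pmod{p}$, then $v(y^p) = v(x)$, hence $v(y) = v(x)/p$.

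For the direction $(1) \Rightarrow (2)$, Frobenius surjectivity on $C^+/p$ is immediate from the definition of perfectoid. To produce a nonzero nilpotent, I would use the element $\pi \in \widehat{C^+}$ with $\pi^p \mid p$ provided by the definition; its image $\bar{\pi}$ in $C^+/p$ is nilpotent because $\bar\pi^p=0$, and it is nonzero because $\pi \in p\widehat{C^+}$ would yield an equation $p^{p-1} \cdot (\text{something}) = 1$, contradicting that $p$ is not a unit in mixed-characteristic.

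For the direction $(2) \Rightarrow (1)$, I would start with a lift $\pi_0 \in \widehat{C^+}$ of the given nonzero nilpotent element $\bar\pi_0 \in C^+/p$, which satisfies $0 < v(\pi_0) < v(p)$. Using Frobenius surjectivity, inductively pick $\bar\pi_k \in C^+/p$ with $\bar\pi_k^p = \bar\pi_{k-1}$ and lift them; by the valuation computation above, $v(\pi_k) = v(\pi_0)/p^k$. After finitely many steps, $p v(\pi_k) \leq v(p)$, so $\pi_k^p$ divides $p$ in $\widehat{C^+}$. This produces the element $\pi$ witnessing perfectoidness, the $\pi$-adic topology coinciding with the $p$-adic one so that completeness is automatic.

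The main obstacle is checking that the kernel of Fontaine's map $\theta : W((\widehat{C^+})^\flat) \to \widehat{C^+}$ is principal (the remaining clause of the definition of perfectoid). Here I would exploit the valuation ring structure: choose the $\pi_k$ above compatibly so as to produce an element $\pi^\flat \in (\widehat{C^+})^\flat = \varprojlim_\phi C^+/p$, and verify that $\xi := [\pi^\flat]^p - p \cdot (\text{unit})$, or rather a distinguished element of this form, generates $\ker\theta$. The verification amounts to showing that the tilt of $\widehat{C^+}$ is sufficiently well-behaved, which follows formally once the Frobenius-surjectivity and $\pi^p \mid p$ conditions are in place; this is the standard promotion from the "tilting-perfectoid" axioms to the full definition, and is the one place where I would expect to appeal to an auxiliary result from \cite{bhatt_prisms_2022} or \cite{bhatt_integral_2018}.
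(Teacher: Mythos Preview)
Your proposal is correct and follows essentially the same route as the paper. The paper streamlines by citing \cite[Lemmas~3.9 and 3.10]{bhatt_integral_2018} at the outset as a characterization of $p$-torsionfree perfectoid rings (via a compatible system of $p$-power roots of $up$ for some unit $u$); this both justifies your claim $\bar\pi^p=0$ in $(1)\Rightarrow(2)$---take $\pi=(up)^{1/p}$, since from the raw condition $\pi^p\mid p$ alone one only gets nilpotence of $\bar\pi$, not $\bar\pi^p=0$---and absorbs the principality of $\ker\theta$ that you correctly flagged as the remaining obstacle in $(2)\Rightarrow(1)$.
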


\begin{proof}
    The second condition depends only on the ring $C^+/p$, so we can assume the valuation ring $C^+$ is $p$\nobreakdash-adically complete. Because $p$ is nonzero in the valuation ring $C^+$, the ring $C^+$ is in particular $p$\nobreakdash-torsionfree.
    
    By \cite[Lemmas $3.9$ and $3.10$]{bhatt_integral_2018}, a $p$\nobreakdash-torsionfree ring $R$ is perfectoid if and only if $R$ is $p$\nobreakdash-adically complete, the Frobenius on $R/p$ is surjective and $up \in R$ admits a compatible system of $p$\nobreakdash-power roots for some unit $u \in R^\times$. From now on, let $C^+$ be a nonzero $p$\nobreakdash-adically complete and $p$\nobreakdash-torsionfree valuation ring, such that the Frobenius on $C^+/p$ is surjective.
    
    $(1) \Rightarrow (2)$ Let $u \in (C^+)^\times$ be a unit such that $up$ admits a compatible system $((up)^{1/p^n})_{n\in \N}$ of $p$\nobreakdash-power roots in the valuation ring $C^+$. Because $C^+$ is $p$\nobreakdash-torsionfree and not the zero ring, $p$ is nonzero in $C^+$ and neither is $(up)^{1/p}$. The element $(up)^{1/p} \in C^+$ thus defines a nonzero nilpotent element in the ring $C^+/p$.
    
    $(2) \Rightarrow (1)$ Assume the valuation ring $C^+$ has an element $\pi \in C^+$ defining a nonzero nilpotent element in~$C^+/p$. Because the Frobenius on $C^+/p$ is surjective, we can assume that $\pi^p$ is nonzero in~$C^+/p$. Because $C^+$ is a valuation ring, this implies that $\pi^p$ divides $p$ and that $p$ divides some power of $\pi$ in $C^+$. In particular the valuation ring $C^+$ is $\pi$-adically complete for this element $\pi \in C^+$. By \cite[Lemma~$3.9$]{bhatt_integral_2018}, there exists a unit $u \in (C^+)^\times$ such that $up \in C^+$ admits a compatible system of $p$\nobreakdash-power roots in $C^+$.
\end{proof}

\begin{example}\label{Exampleperfectoidvaluationring}
    The $p$\nobreakdash-adic completion of the ring $\Z_p[p^{1/p^\infty}]$ is a perfectoid valuation ring, because it satisfies the hypotheses of Lemma~\ref{lemmacharacterisationperfectoidvaluationring}\,$(2)$. In particular, any valuation ring extension $E^+$ of $\Z_p$ containing a compatible system of $p$\nobreakdash-power roots of $p$ is $p$\nobreakdash-Cartier smooth over $\Z_p[p^{1/p^\infty}]$ by Theorem~\ref{mainthmvaluationrings}. Beware that $p$\nobreakdash-adic completion (or even taking the $p$\nobreakdash-adically separated quotient) does not preserve the value group of a valuation ring. For instance, the localisation of the ring $\Z_p[X/p^n,n\geq 0]$ at the ideal $(X/p^n,n\geq0) \subset \Z_p[X/p^n,n\geq 0]$ is a valuation ring, with fraction field $\Q_p(X)$ and with $p$\nobreakdash-adic completion $\Z_p$; an alternative description of this valuation ring is the fiber product $\Z_p \times_{\Q_p} \Q_p[X]_{(X)}$. Similarly, the localisation of the ring $\Z_p[p^{1/p^\infty},X/p^n, n \geq 0]$ at the ideal $(X/p^n,n\geq 0) \subset \Z_p[p^{1/p^\infty},X/p^n, n \geq 0]$ is a valuation ring, with fraction field $\Q_p(p^{1/p^\infty},X)$ and $p$\nobreakdash-adically separated quotient $\Z_p[p^{1/p^\infty}]$.
\end{example}

To prove Theorem~\ref{mainthmvaluationrings} for the morphism $C^+ \rightarrow E^+$, it suffices to prove that the morphism \hbox{$C^+/p \rightarrow E^+/p$} is cotangent smooth ({\it i.e.}, its cotangent complex $\mathbb{L}_{(E^+/p)/(C^+/p)}$ is given by a flat $E^+/p$\nobreakdash-modu\-le in degree $0$) and satisfies the Cartier isomorphism (Remark~\ref{RemarkvaluationringssufficesmodpCSm}). We first prove the cotangent smoothness.

\begin{lemma}\label{Lemmaptorsionfreeimpliesmodpisflat}
    Let $E^+$ be a valuation ring in which $p$ is nonzero, and $M$ be an $E^+$-module. If the $E^+$\nobreakdash-module $M$ is $p$\nobreakdash-torsionfree, then its derived reduction $M \otimes_{E^+}^\mathbb{L} E^+/p$ modulo $p$ is a flat $E^+/p$\nobreakdash-module in degree $0$.
\end{lemma}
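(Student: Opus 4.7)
The plan is to separate the two assertions: (i) $M \otimes_{E^+}^{\mathbb{L}} E^+/p$ is concentrated in degree $0$, and (ii) $M/p$ is flat over $E^+/p$. For (i), since $p$ is nonzero in $E^+$ and $E^+$ is a domain, $p$ is a nonzerodivisor, so $0 \to E^+ \xrightarrow{p} E^+ \to E^+/p \to 0$ is a free resolution of $E^+/p$. Tensoring with $M$ and using that $M$ is $p$-torsionfree, $\mathrm{Tor}_1^{E^+}(M, E^+/p) = M[p] = 0$, so $M \otimes_{E^+}^{\mathbb{L}} E^+/p \simeq M/p$ in degree $0$.

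For (ii), I use the ideal-theoretic criterion for flatness: $M/p$ is flat over $E^+/p$ iff for every finitely generated ideal $J \subset E^+/p$, the multiplication map $J \otimes_{E^+/p} M/p \to M/p$ is injective. Since $E^+$ is a valuation ring its ideals are totally ordered, so the same is true in the quotient $E^+/p$; in particular every finitely generated ideal of $E^+/p$ is principal, generated by some $\bar f$ with $f \in E^+$. If $\bar f = 0$ the claim is trivial, so assume $\bar f \neq 0$, which forces $f \mid p$ in $E^+$, say $p = fg$. The map $E^+ \to (f)/(p)$, $a \mapsto fa$, has kernel $(g)$ (because $E^+$ is a domain and $fa \in (p) = (fg)$ iff $g \mid a$), yielding an isomorphism $(\bar f) \cong E^+/(g)$ under which the inclusion $(\bar f) \hookrightarrow E^+/p$ becomes multiplication by $\bar f$.

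It therefore suffices to show that if $m \in M$ satisfies $fm \in pM$ then $m \in gM + pM = gM$ (the last equality since $p = fg \in (g)$). The main observation, and the one place where $p$-torsionfreeness is really used, is that $M$ is automatically $f$-torsionfree: if $fx = 0$ then $px = gfx = 0$, so $x = 0$ by $p$-torsionfreeness. Now write $fm = pm' = f g m'$; then $f(m - gm') = 0$, hence $m = gm'$ by $f$-torsionfreeness, so $m \in gM$, as desired. This gives flatness of $M/p$ over $E^+/p$ and concludes the proof. I do not anticipate a serious obstacle — the only subtle point is identifying $(\bar f)$ with $E^+/(g)$ explicitly so that one actually has the right description of multiplication by $\bar f$; once that is in place the argument is the standard Tor-vanishing check, and $p$-torsionfreeness automatically propagates to $f$-torsionfreeness for every divisor $f$ of $p$.
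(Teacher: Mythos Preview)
Your proof is correct and takes a genuinely different route from the paper's. The paper does not verify the ideal-theoretic flatness criterion directly; instead it passes to the $p$-adically separated quotient $E^{+\prime} := E^+/\bigcap_n p^n E^+$, which is again a valuation ring. Over a $p$-adically separated valuation ring every nonzero element divides some power of $p$, so $p$-torsionfree modules are automatically torsionfree, hence flat. The paper then checks that $M/p \cong (M \otimes_{E^+} E^{+\prime})/p$ and concludes. Your approach is more hands-on: you exploit that every finitely generated ideal of $E^+/p$ is principal, identify $(\bar f)$ explicitly with $E^+/(g)$ when $p=fg$, and reduce the Tor-vanishing to the single implication ``$fm \in pM \Rightarrow m \in gM$'', which follows immediately from $f$-torsionfreeness. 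Your argument is shorter and entirely elementary, avoiding the passage to a quotient ring; the paper's argument, on the other hand, isolates a reusable structural fact (in a $p$-adically separated valuation ring, $p$-torsionfree $=$ flat) that may be conceptually clarifying elsewhere.
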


\begin{proof}
    Because $p$ is a nonzerodivisor in the valuation ring $E^+$, the derived reduction $M \otimes_{E^+}^\mathbb{L} E^+/p$ modulo $p$ of any $E^+$-module $M$ is concentrated in degree $0$ if and only if $M$ is $p$\nobreakdash-torsionfree. 
    
    Assuming $M$ is a $p$\nobreakdash-torsionfree $E^+$-module, we prove now that $M/p$ is a flat $E^+/p$-module. Consider the $p$\nobreakdash-adically separated quotient $E^+{}'$ of $E^+$, {\it i.e.}, the quotient of $E^+$ by its ideal $\bigcap_{n\geq 0} p^nE^+$. 
    The ring $E^+{}'$ is a valuation ring, because it is an integral domain and satisfies the divisibility condition of valuation rings. 
    Define $M'$ as the $E^+{}'$-module $M \otimes_{E^+} E^+{}'$. The morphism $M \rightarrow M'$ of $E^+$-modules is surjective, with kernel given by a quotient of the $E^+$-module $M \otimes_{E^+} \bigcap_{n\geq 0} p^n E^+$. The $E^+$-module $M \otimes_{E^+} \bigcap_{n\geq 0} p^n E^+$ is $p$\nobreakdash-divisible, so the morphism $M \rightarrow M'$ becomes an isomorphism after reduction modulo $p$. Because $E^+{}'$ is a $p$\nobreakdash-adically separated valuation ring, any $p$\nobreakdash-torsionfree $E^+{}'$-module is torsionfree and thus flat. In particular the $E^+{}'$-module $M'$ is flat, and the $E^+{}'/p$-module $M'/p$ is also flat. So the $E^+/p$-module $M/p$ is flat.
\end{proof}

\begin{proposition}[Cotangent smoothness of the morphism $C^+/p\rightarrow E^+/p$]\label{Propositionquasismoothnessvaluationrings}
    Let $C^+$ be a mixed-characteristic valuation ring whose $p$\nobreakdash-adic completion is a perfectoid ring. Let $E^+$ be a valuation ring extension of$~C^+$. Then the $E^+$-module $\Omega^1_{E^+/C^+}$ is $p$\nobreakdash-torsionfree and the cotangent complex $\mathbb{L}_{(E^+/p)/(C^+/p)}$ is a flat $E^+/p$\nobreakdash-module in degree $0$.
\end{proposition}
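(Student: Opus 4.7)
The strategy is to first invoke a general theorem on cotangent complexes of valuation ring extensions to reduce the whole statement to a single $p$-torsionfreeness claim for $\Omega^1_{E^+/C^+}$, and then to establish that claim using the perfectoid structure of $R := \widehat{C^+}$.

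First, by the Gabber--Ramero theorem \cite[Theorem~6.5.8(ii)]{gabber_almost_2003}, the cotangent complex of any extension of valuation rings is concentrated in degree $0$, so $\mathbb{L}_{E^+/C^+} \simeq \Omega^1_{E^+/C^+}[0]$. Since every extension of valuation rings is flat, the morphism $C^+\to E^+$ is $p$-discrete (Example~\ref{examplepdiscrete}\,$(2)$), so base change for the cotangent complex yields
\[
\mathbb{L}_{(E^+/p)/(C^+/p)} \;\simeq\; \Omega^1_{E^+/C^+} \otimes^{\mathbb{L}}_{E^+} E^+/p.
\]
Combining this with Lemma~\ref{Lemmaptorsionfreeimpliesmodpisflat}, both assertions of the proposition (concentration in degree $0$ and flatness over $E^+/p$) will follow at once from a single statement: that $\Omega^1_{E^+/C^+}$ is a $p$-torsionfree $E^+$-module.

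To establish $p$-torsionfreeness, I would exploit the perfectoid hypothesis through Lemma~\ref{lemmacharacterisationperfectoidvaluationring}: there exists a nonzero $\pi_0 \in C^+$ with $\pi_0^p \in pC^+$, and the Frobenius on $C^+/p$ is surjective. Iterating surjectivity of Frobenius produces a compatible system $(\pi_n)_{n\geq 0}$ in $C^+$ with $\pi_n^p \equiv \pi_{n-1} \pmod{p}$, playing the role of a compatible system of $p$-power roots modulo $p$. Each $\pi_n$ lies in the base $C^+$, so $d\pi_n = 0$ in $\Omega^1_{E^+/C^+}$; writing $\pi_n^p = \pi_{n-1} + p\, a_n$ with $a_n \in C^+$ and using the Leibniz rule then produces divisibility relations in $\Omega^1_{E^+/C^+}$. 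Given $\omega$ with $p\omega = 0$, one feeds this through the relations and uses the fact that $E^+$ is a valuation ring (so divisibility by arbitrary powers of $\pi_0$ forces vanishing) to conclude $\omega = 0$.

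The hard part will be making the divisibility argument work cleanly. A possibly more conceptual route is via reduction to characteristic $p$: the radical $\sqrt{pC^+}$ is a prime ideal (ideals in a valuation ring form a chain), so $C^+/\sqrt{pC^+}$ is a perfect valuation ring of characteristic $p$, and Theorem~\ref{TheoremGabberandGabberRamero} applies to the extension $C^+/\sqrt{pC^+} \to E^+/\sqrt{pC^+}E^+$. The main technical obstacle would then be lifting this characteristic-$p$ information back along the nilpotent thickening $C^+/\sqrt{pC^+} \leftarrow C^+/p$ using deformation theory for the cotangent complex, with the perfectoid hypothesis on $\widehat{C^+}$ ensuring that the relevant obstructions (measured by $\mathbb{L}_{(C^+/p)/(C^+/\sqrt{pC^+})}$) are controlled well enough to preserve $p$-torsionfreeness when passing to $E^+$.
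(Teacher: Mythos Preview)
Your reduction step is correct and matches the paper's: Gabber--Ramero gives $\mathbb{L}_{E^+/C^+}\simeq\Omega^1_{E^+/C^+}[0]$, $p$-discreteness yields $\mathbb{L}_{(E^+/p)/(C^+/p)}\simeq\Omega^1_{E^+/C^+}\otimes^{\mathbb{L}}_{E^+}E^+/p$, and Lemma~\ref{Lemmaptorsionfreeimpliesmodpisflat} then reduces everything to the single claim that $\Omega^1_{E^+/C^+}$ is $p$-torsionfree.

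The gap is in the $p$-torsionfreeness argument. Your approach~(a) cannot work: since every $\pi_n$ and every $a_n$ lies in the base $C^+$, one has $d\pi_n=d\pi_{n-1}=da_n=0$ in $\Omega^1_{E^+/C^+}$, so applying the Leibniz rule to $\pi_n^p=\pi_{n-1}+pa_n$ yields the identity $0=0$. No divisibility relation on a general $\omega\in\Omega^1_{E^+/C^+}$ is ever produced; the argument never touches elements of $E^+$ outside $C^+$. Your approach~(b) is not a proof but a hope: knowing that $\Omega^1_{(E^+/\sqrt{pC^+})/(C^+/\sqrt{pC^+})}$ is flat over $E^+/\sqrt{pC^+}$ does not by itself constrain the $p$-torsion of $\Omega^1_{E^+/C^+}$, and the ``lifting along the nil-thickening $C^+/p\to C^+/\sqrt{pC^+}$'' step is left entirely unspecified (note also that this kernel is nil but typically not nilpotent).

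The paper's argument is different and short. Choose an algebraic closure $\overline{E}$ of $E$ with a valuation ring $\overline{E}^+$ extending $E^+$. The transitivity sequence for $C^+\to E^+\to\overline{E}^+$, together with $\mathbb{L}$ in degree $0$ for each step and flatness of $E^+\to\overline{E}^+$, gives a short exact sequence
\[
0\longrightarrow \Omega^1_{E^+/C^+}\otimes_{E^+}\overline{E}^+\longrightarrow \Omega^1_{\overline{E}^+/C^+}\longrightarrow \Omega^1_{\overline{E}^+/E^+}\longrightarrow 0.
\]
By faithful flatness of $E^+\to\overline{E}^+$, it suffices to show $\Omega^1_{\overline{E}^+/C^+}$ is $p$-torsionfree. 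But $(\overline{E}^+)^\wedge_p$ is perfectoid (as $\overline{E}$ is algebraically closed), so the cotangent complex $\mathbb{L}_{(\overline{E}^+/p)/(C^+/p)}$ vanishes; hence $\Omega^1_{\overline{E}^+/C^+}\otimes^{\mathbb{L}}_{\overline{E}^+}\overline{E}^+/p$ is in degree $0$, which is exactly $p$-torsionfreeness. The key idea you are missing is to enlarge $E^+$ rather than manipulate elements of $C^+$.
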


\begin{proof}
    Note the equivalence $\mathbb{L}_{(E^+/p)/(C^+/p)} \simeq \mathbb{L}_{E^+/C^+} \otimes_{E^+}^\mathbb{L} E^+/p$. The cotangent complex of any extension of valuation rings with characteristic $0$ fraction fields is concentrated in degree $0$ (\cite[Theorem~$6.5.8\,(ii)$]{gabber_almost_2003}\footnote{More precisely, \cite[Theorem~$6.5.8\,(ii)$]{gabber_almost_2003} is formulated for extensions of valued fields. An extension of valuation rings $C^+ \hookrightarrow E^+$ is the composition of the localisation $C^+ \hookrightarrow C^+_\mathfrak{p}$ at the prime ideal $\mathfrak{p} := C^+ \cap \mathfrak{m}_{E^+}$ and the morphism $C^+_\mathfrak{p} \hookrightarrow E^+$. The cotangent complex of the first morphism is trivial and the second morphism is induced by an extension of valued fields.}). In particular the cotangent complex $\mathbb{L}_{E^+/C^+}$ is concentrated in degree $0$, given by the $E^+$-module $\Omega^1_{E^+/C^+}$. If the $E^+$-module $\Omega^1_{E^+/C^+}$ is $p$\nobreakdash-torsionfree, then the cotangent complex $\mathbb{L}_{(E^+/p)/(C^+/p)}$ is a flat $E^+/p$-module in degree $0$ (Lemma~\ref{Lemmaptorsionfreeimpliesmodpisflat}).
    
    We prove now that the $E^+$-module $\Omega^1_{E^+/C^+}$ is $p$\nobreakdash-torsionfree. Let $E$ be the fraction field of the valuation ring $E^+$, and $\overline{E}$ an algebraic closure of $E$. We fix a valuation on $\overline{E}$ extending the valuation of the valued field $E$, and denote by $\overline{E}^+$ the corresponding ring of integers. Applying again \cite[Theorem~$6.5.8\,(ii)$]{gabber_almost_2003} to $C^+$, $E^+$ and $\overline{E}^+$ and because the morphism $E^+ \rightarrow \overline{E}^+$ is flat, the transivity sequence for the cotangent complex can be rewritten as a short exact sequence of $\overline{E}^+$-modules:
    $$0 \longrightarrow \Omega^1_{E^+/C^+} \otimes_{E^+} \overline{E}^+ \longrightarrow \Omega^1_{\overline{E}^+/C^+} \longrightarrow \Omega^1_{\overline{E}^+/E^+} \longrightarrow 0.$$
    An $E^+$-module (resp. $\overline{E}^+$-module) $M$ is $p$\nobreakdash-torsionfree if and only if the multiplication by $p$ morphism $p : M \rightarrow M$ is injective. The morphism $E^+ \rightarrow \overline{E}^+$ being faithfully flat, the $E^+$-module $\Omega^1_{E^+/C^+}$ is thus $p$\nobreakdash-torsionfree if and only if the $\overline{E}^+$-module $\Omega^1_{\overline{E}^+/C^+} \otimes_{E^+} \overline{E}^+$ is $p$\nobreakdash-torsionfree. By the previous short exact sequence, it then suffices to prove the $\overline{E}^+$-module $\Omega^1_{\overline{E}^+/C^+}$ is $p$\nobreakdash-torsionfree. 
    The $p$\nobreakdash-adic completions $(C^+)^\wedge_p$ and $(\overline{E}^+)^\wedge_p$ of $C^+$ and $\overline{E}^+$ are perfectoid rings, respectively by assumption and because the field $\overline{E}$ is algebraically closed. In particular the cotangent complex $\mathbb{L}_{((\overline{E}^+)^\wedge_p)/p / ((C^+)^\wedge_p)/p}$ is zero (\cite[Lemma~$3.14$]{bhatt_integral_2018}). Equivalently, the cotangent complex $\mathbb{L}_{(\overline{E}^+/p)/(C^+/p)}$ is zero. In particular, the derived reduction $\Omega^1_{\overline{E}^+/C^+} \otimes_{\overline{E}^+}^\mathbb{L} \overline{E}^+/p$ modulo $p$ of the $\overline{E}^+$-module $\Omega^1_{\overline{E}^+/C^+}$ is in degree $0$, or equivalently the $\overline{E}^+$-module $\Omega^1_{\overline{E}^+/C^+}$ is $p$\nobreakdash-torsionfree.
\end{proof}

It remains to prove the Cartier isomorphism for the reduction modulo $p$ of valuation ring extensions $C^+ \rightarrow E^+$, where $C^+$ is a mixed-characteristic valuation ring whose $p$\nobreakdash-adic completion is a perfectoid ring. 
The proof of the Cartier isomorphism for positive charateristic valuation rings \cite[Corollary~$A.4$]{kerz_towards_2021} relies on subtle approximation results of Gabber, which do not immediately pass to mixed-characteristic. Our strategy of proof in mixed-characteristic is to reduce to this result in positive characteristic. We are immediately faced with the following issue: if $C^+$ is a mixed-characteristic valuation ring, the ring $C^+/p$ is in general not an integral domain, and in particular not a valuation ring.

Here we use the perfectoid assumption on the base $C^+$ to remark that there is a perfect valuation ring $C^{+\flat}$ of characteristic $p$ (the tilt of $C^+$) whose reduction modulo some element $d \in C^{+\flat}$ is naturally isomorphic to $C^+/p$. The Cartier isomorphism is preserved by base change (Lemma~\ref{LemmabasechangeforCartiersmoothness}). So it would suffice to find a valuation ring extension $E^{+\flat}$ of $C^{+\flat}$ whose reduction modulo $d$ is $E^+/p$ to prove the Cartier isomorphism for $C^+/p \rightarrow E^+/p$. To construct such a valuation ring $E^{+\flat}$ over the $d$\nobreakdash-adically complete lift $C^{+\flat}$ of $C^+/p$, we use the deformation theory of Illusie \cite[III$.2.1.2.3$]{illusie_complexe_1971} (see also \cite[Theorem~$5.11$]{scholze_perfectoid_2012}). Namely, we will need the following result, where $R$ is a ring, $I\subset R$ is an ideal such that $I^2 = 0$, and $S_0$ is a flat $R_0:=R/I$-algebra.

\begin{theorem}[\cite{illusie_complexe_1971}]\label{TheoremIllusieondeformations}
    There is an obstruction class $\omega(S_0)$ in the group $\emph{Ext}^2_{S_0}(\mathbb{L}_{S_0/R_0},S_0\otimes_{R_0}I)$ which vanishes precisely when there exists a flat $R$-algebra $S$ such that $S \otimes_R R_0 = S_0$. If there exists such a deformation, then the set of all isomorphism classes of such deformations forms a torsor under $\emph{Ext}^1_{S_0}(\mathbb{L}_{S_0/R_0},S_0\otimes_{R_0}I)$, and every deformation has automorphism group $\emph{Hom}_{S_0}(\mathbb{L}_{S_0/R_0},S_0\otimes_{R_0} I)$. 
\end{theorem}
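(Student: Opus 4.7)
The plan is to derive all three assertions from the single organising principle that the space of flat deformations of $S_0$ from $R_0$ to $R$ is controlled by $\mathbb{L}_{S_0/R_0}$ with coefficients in the square-zero ideal $S_0 \otimes_{R_0} I$. Starting with the automorphism group, I would fix a flat deformation $S$ and observe that any $R$-algebra automorphism $\sigma : S \to S$ whose reduction modulo $I$ is $\mathrm{id}_{S_0}$ has the form $\sigma = \mathrm{id}_S + d$ with $d : S \to IS$ additive. Using $I^2 = 0$ and flatness of $S_0$ over $R_0$, one identifies $IS \cong S_0 \otimes_{R_0} I$ and sees that $d$ factors through an $R_0$-linear map $S_0 \to S_0 \otimes_{R_0} I$. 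The requirement that $\sigma$ be multiplicative translates, using $I^2 = 0$ to kill the cross terms, into exactly the Leibniz rule for $d$. Hence the automorphism group is $\mathrm{Der}_{R_0}(S_0, S_0 \otimes_{R_0} I) = \mathrm{Hom}_{S_0}(\Omega^1_{S_0/R_0}, S_0 \otimes_{R_0} I) = \mathrm{Hom}_{S_0}(\mathbb{L}_{S_0/R_0}, S_0 \otimes_{R_0} I)$, where the last identification uses that $\mathbb{L}_{S_0/R_0}$ is connective.

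For the torsor and obstruction structures I would introduce a simplicial free $R_0$-algebra resolution $P_\bullet^0 \twoheadrightarrow S_0$ by polynomial algebras. Since free modules lift uniquely, this lifts to a simplicial free $R$-algebra $P_\bullet$, automatically a flat deformation of $P_\bullet^0$. The cotangent complex $\mathbb{L}_{S_0/R_0}$ is then computed by the Kähler differentials of this resolution, so the groups $\mathrm{Ext}^i_{S_0}(\mathbb{L}_{S_0/R_0}, S_0 \otimes_{R_0} I)$ acquire explicit cocycle descriptions in terms of maps out of the $\Omega^1_{P_n^0 / R_0}$. Given two flat deformations $S$ and $S'$ of $S_0$ together with chosen $R$-algebra lifts $P_\bullet \to S$ and $P_\bullet \to S'$ of the augmentation, their difference is, at each simplicial degree, an $R_0$-derivation of $P_n^0$ valued in $S_0 \otimes_{R_0} I$, and the resulting cocycle has a well-defined cohomology class in $\mathrm{Ext}^1_{S_0}(\mathbb{L}_{S_0/R_0}, S_0 \otimes_{R_0} I)$ measuring $[S] - [S']$. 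That every class arises, and that the action is simply transitive, follows by exponentiating such cocycles to modify one deformation into another.

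For existence, I would attempt to lift the simplicial ideal $J_\bullet^0 := \ker(P_\bullet^0 \twoheadrightarrow S_0)$ to a simplicial ideal $J_\bullet \subset P_\bullet$ with $P_\bullet / J_\bullet$ flat over $R$, the flatness criterion being the Tor vanishing $J_n \cap IP_n = IJ_n$ at each level. Set-theoretic lifts exist at each individual level, and their failure to assemble coherently across the simplicial structure defines a $2$-cocycle with values in $S_0 \otimes_{R_0} I$ whose cohomology class in $\mathrm{Ext}^2_{S_0}(\mathbb{L}_{S_0/R_0}, S_0 \otimes_{R_0} I)$ is the obstruction $\omega(S_0)$: by construction it vanishes precisely when the local lifts can be simultaneously adjusted into a coherent global deformation. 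The main obstacle, and the reason Illusie's original treatment in \cite{illusie_complexe_1971} is lengthy, is proving that $\omega(S_0)$ depends only on $S_0$ and not on the chosen resolution or intermediate set-theoretic lifts, and that it behaves functorially. This is cleanest in the animated formalism, where the space of deformations of $S_0$ to $R$ is identified with a fiber of a morphism built from $R\mathrm{Hom}_{S_0}(\mathbb{L}_{S_0/R_0}, S_0 \otimes_{R_0} I)$, and all three statements of the theorem are then readings of its homotopy groups in degrees $0$, $-1$ and $-2$.
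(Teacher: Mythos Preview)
The paper does not prove this theorem. It is stated with an explicit citation to Illusie \cite{illusie_complexe_1971} in the theorem header and is used as a black box in the subsequent construction of the lift $E^{+\flat}$; no argument is given in the paper itself. There is therefore nothing to compare your proposal against.

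That said, your sketch is a correct outline of the standard proof. The identification of the automorphism group with derivations is exactly right, and the use of a simplicial polynomial resolution to produce explicit cocycle representatives for the $\mathrm{Ext}^1$-torsor and the $\mathrm{Ext}^2$-obstruction is Illusie's original strategy. You are also honest about the genuine difficulty: the well-definedness and functoriality of the obstruction class is where most of the work lies, and your closing remark that the animated viewpoint packages all three statements as homotopy groups of a single mapping space is the modern perspective (as in Lurie's treatment). For the purposes of this paper none of that is needed; the result is invoked, not reproved.
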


To apply this deformation result recursively up to a $d$\nobreakdash-adic deformation $E^{+\flat}$ of the flat $C^+/p$\nobreakdash-alge\-bra~$E^+/p$, we need to have control on the cotangent complex $\mathbb{L}_{(E^+/p)/(C^+/p)}$. More precisely, if the cotangent complex $\mathbb{L}_{(E^+/p)/(C^+/p)}$ was a projective $E^+/p$-module in degree $0$ then the higher $\text{Ext}$\nobreakdash-groups in this deformation result would vanish and we could construct the lift $E^{\flat}$ in a unique way. By Proposition~\ref{Propositionquasismoothnessvaluationrings}, the cotangent complex $\mathbb{L}_{(E^+/p)/(C^+/p)}$ is concentrated in degree $0$, given by the flat $E^+/p$-module~$\Omega^1_{(E^+/p)/(C^+/p)}$. The inverse Cartier map commuted with filtered colimits, so we can assume the field extension $C \rightarrow E$ induced by the valuation ring extension $C^+ \rightarrow E^+$ is of finite type to prove the Cartier isomorphism. But even for such valuation ring extensions, the $E^+/p$\nobreakdash-module $\Omega^1_{(E^+/p)/(C^+/p)}$ will not be projective in general. For instance, when $E \cong C(X)$, it follows from the proof of \cite[Proposition~$6.5.6$]{gabber_almost_2003} that the $E^+$-module $\Omega^1_{E^+/C^+}$ can be isomorphic to $\mathfrak{m}_{C^+}E^+$, where~$\mathfrak{m}_{C^+}$ is the maximal ideal of $C^+$. The next result will ensure the vanishing of the relevant $\text{Ext}^2$\nobreakdash-groups in the deformation theory, and thus the existence of a lift $E^{+\flat}$ (see also Remark~\ref{remarkvaluationringliftuniqueness} about the uniqueness of such a lift).

\begin{lemma}\label{lemmacgvaluationring}
    Let $E^+$ be a finite rank valuation ring, with fraction field $E$. If $M$ is a torsionfree $E^+$\nobreakdash-module such that $M \otimes_{E^+} E$ is a finite dimensional $E$-vector space, then the $E^+/p$-module $M/p$ is a countable filtered colimit of free $E^+/p$\nobreakdash-modules of rank at most $\emph{dim}_E(M\otimes_{E^+} E)$. In particular, the $E^+/p$-module $M/p$ has projective dimension at most $1$.
\end{lemma}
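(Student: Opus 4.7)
The plan is to exhibit $M$ as a countable filtered union of finitely generated free $E^+$-submodules, then derive both assertions about $M/p$ by base change and a standard telescope short exact sequence.

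Since $M$ is torsionfree over a valuation ring it is flat, so the canonical map $M \hookrightarrow V := M \otimes_{E^+} E$ is injective; identify $V$ with $E^n$, where $n := \dim_E(M \otimes_{E^+} E)$. The heart of the argument is to show that every $E^+$-submodule of $E^n$ is countably generated. The essential input is the finite rank assumption: writing $0 = \Gamma_0 \subsetneq \Gamma_1 \subsetneq \cdots \subsetneq \Gamma_r = \Gamma_E$ for the chain of convex subgroups of the value group of $E^+$, each quotient $\Gamma_i/\Gamma_{i-1}$ embeds into $\R$ by H\"older's theorem. By induction on $r$, I would prove that any nonempty subset of $\Gamma_E$ admits a countable coinitial subset, distinguishing in the inductive step the cases where the infimum of its image in $\Gamma_E/\Gamma_{r-1}$ is attained or not. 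Translating back, every fractional ideal of $E^+$ is countably generated. A further induction on $n$ via the short exact sequence
$$0 \longrightarrow M \cap E^{n-1} \longrightarrow M \longrightarrow \pi_n(M) \longrightarrow 0,$$
where $\pi_n : E^n \to E$ denotes the last coordinate projection, then upgrades this to all submodules of $E^n$.

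Given countable generators $m_1, m_2, \dots$ of $M$, set $L_k := E^+ m_1 + \cdots + E^+ m_k$. Each $L_k$ is finitely generated and torsionfree, hence free of rank at most $n$ (finitely generated torsionfree modules over a valuation ring are free). Since $M = \bigcup_k L_k$, base change yields $M/p = \text{colim}_k\, L_k/p$, a countable filtered colimit of free $E^+/p$-modules of rank at most $n$, which is the first assertion.

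For the projective dimension bound, the standard telescope short exact sequence
$$0 \longrightarrow \bigoplus_{k \geq 1} L_k/p \xlongrightarrow{\text{id} - \text{shift}} \bigoplus_{k \geq 1} L_k/p \longrightarrow M/p \longrightarrow 0$$
is a length-one free resolution of $M/p$, showing that its projective dimension is at most $1$. The main obstacle is proving countable generatedness in the first step, which is where the finite rank hypothesis is genuinely used via H\"older's theorem; the remaining steps are formal.
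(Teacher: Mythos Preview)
Your proof is correct and follows essentially the same strategy as the paper's: embed $M$ in $E^n$, use induction on $n$ via projection to the last coordinate, invoke the finite rank hypothesis for countability in the rank-one base case, and finish with the telescope (Milnor) short exact sequence for the projective-dimension bound. The only organizational difference is that you first prove $M$ is countably generated and then form the free submodules $L_k$ as spans of initial segments of a generating sequence (using that finitely generated torsionfree modules over a valuation ring are free), whereas the paper carries the filtered union of free submodules directly through the induction by splitting the projection sequence at each stage; your route is slightly more streamlined, and your appeal to H\"older's theorem makes explicit what the paper leaves implicit in the sentence ``because the valuation ring $E^+$ has finite rank, one can assume that the filtered colimit is countable.''
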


\begin{proof}
    The morphism of $E^+$-modules $M \rightarrow M\otimes_{E^+} E$ is injective because the $E^+$-module $M$ is torsionfree. We identify the $E^+$-module $M$ with a submodule of the $E^+$-module $M \otimes_{E^+} E$ via this morphism. We prove by induction on the dimension $d$ of the $E$-vector space $M \otimes_{E^+} E$ that $M$ is a countable filtered union of free $E^+$\nobreakdash-modules.

    For $d=0$, the $E^+$-module $M$ is free of rank $0$. For $d=1$, the $E^+$-module $M$ is isomorphic to a submodule of $E$, and we consider two cases. If $M$ is equal to $E$, then $M$ is the filtered union of the free $E^+$-submodules of rank $1$ of $E$. If $M$ is not equal to $E$, then up to shifting $M$ multiplicatively by an element of $E^+$, we can assume that $M$ is contained in $E^+$, {\it i.e.}, that $M$ is an ideal of $E^+$. Every ideal of a ring is the filtered union of its finitely generated subideals, and every such ideal is principal in a valuation ring. So the $E^+$-module $M$ can be written as a filtered union of free $E^+$-submodules of rank at most $1$. In both cases, and because the valuation ring $E^+$ has finite rank, one can assume that the filtered colimit is countable. So the $E^+$-module $M$ is a countable filtered union of free $E^+$-modules of rank at most $1$.

    Fix an integer $d \geq 1$, assume the result is proved for all integers $\leq d$, and let $M$ be an $E^+$-submodule of $(d+1)$-dimensional $E$-vector space $\oplus_{i=1}^{d+1} Ee_i$. Let $M_{e_{d+1}}$ be the image of $M$ under the projection $p_{d+1} : \oplus_{i=1}^{d+1} Ee_i \rightarrow Ee_{d+1}$. By the previous paragraph, the $E^+$-module $M_{e_{d+1}}$ is the countable filtered union of a system $(M_{e_{d+1}}^{(n)})_{n \in \N}$ of free $E^+$-modules of rank at most $1$. Let $M^{(n)} := p_{d+1}^{-1}(M_{e_{d+1}}^{(n)})$, so that $M$ is the filtered union of the $E^+$-modules $M^{(n)}$. For each integer $n \geq 0$, there is a short exact sequence of $E^+$-modules
    $$0 \longrightarrow M^{(n)} \cap \oplus_{i=1}^d Ee_i \longrightarrow M^{(n)} \longrightarrow M_{e_{d+1}}^{(n)} \longrightarrow 0,$$
    which is split since the $E^+$-module $M_{e_{d+1}}^{(n)}$ is free. The induction hypothesis implies that, for each integer $n \geq 0$, the $E^+$-module $M^{(n)} \cap \oplus_{i=1}^d Ee_i$ is a countable filtered union of free $E^+$-modules. Taking the direct sum with the $E^+$-module $M^{(n)}_{e_{d+1}}$, this implies that $M^{(n)}$ is the countable filtered union of a system $(M^{(n,m)})_{m \in \N}$ of free $E^+$-modules. The $E^+$-module $M$ is thus the countable union of the free $E^+$-modules $M^{(n,m)}$, $n,m \in \N$, and this union is filtered by construction. This concludes the induction.

    In particular, the $E^+/p$-module $M/p$ is a countable filtered colimit $(P^{(n)})_{n \in \N}$ of free $E^+/p$-modules of rank at most $\text{dim}_E(M\otimes_{E^+}E)$. To prove the last claim, consider the Milnor exact sequence 
    $$0 \longrightarrow \oplus_{n \in \N} P^{(n)} \xlongrightarrow{\partial} \oplus_{n \in \N} P^{(n)} \longrightarrow M/p \longrightarrow 0$$
    of $E^+$-modules, where $f_n : P^{(n)} \rightarrow P^{(n+1)}$ is the transition map and $$\partial : (x_n)_{n \geq 0} \longmapsto (x_n-f_{n-1}(x_{n-1})),$$ where $f_{-1}(x_{-1}):=0$. For every $E^+/p$-module $Q$, the Ext-groups $\text{Ext}^i_{E^+/p}(P^{(n)},Q)$ vanish for all integers $n \geq 0$ and $i \geq 1$. Applying the long exact sequence of Ext-groups to the previous short exact sequence of $E^+/p$-modules implies that 
    $$\text{Ext}^i_{E^+/p}(M/p,Q) \cong 0$$
    for every integer $i \geq 2$ and every $E^+/p$-module $Q$, {\it i.e.}, that the $E^+/p$-module $M/p$ has projective dimension at most $1$.
\end{proof}

\begin{corollary}\label{Corollaryalmostprojective}
     Let $C^+$ be a mixed-characteristic valuation ring whose $p$\nobreakdash-adic completion is a perfectoid ring. Let $E^+$ be a finite rank valuation ring extension of $C^+$. Assume that $C^+$ and $E^+$ are $p$\nobreakdash-adically separated, and that the induced field extension $C \rightarrow E$ is of finite type. Then the $E^+/p$\nobreakdash-module $\Omega^1_{(E^+/p)/(C^+/p)}$ has projective dimension at most $1$.
\end{corollary}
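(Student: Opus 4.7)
The plan is to apply Lemma~\ref{lemmacgvaluationring} to the $E^+$-module $M := \Omega^1_{E^+/C^+}$. By Proposition~\ref{Propositionquasismoothnessvaluationrings}, the module $M$ is $p$\nobreakdash-torsionfree, so its reduction $M/p$ coincides with $\Omega^1_{(E^+/p)/(C^+/p)}$; and since $E/C$ is a finitely generated field extension, the generic fibre $M \otimes_{E^+} E = \Omega^1_{E/C}$ is finite-dimensional over $E$. The lemma will then yield the desired bound on projective dimension, so the only remaining task is to upgrade the $p$\nobreakdash-torsionfreeness of $M$ to full torsionfreeness.

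To do this, I fix an algebraic closure $\overline{E}$ of $E$ together with an extension of the valuation of $E$ to $\overline{E}$, and write $\overline{E}^+ \subset \overline{E}$ for the corresponding ring of integers. The morphism $E^+ \rightarrow \overline{E}^+$ is faithfully flat, and the transitivity fibre sequence for the cotangent complex combined with \cite[Theorem~$6.5.8\,(ii)$]{gabber_almost_2003} (which ensures that $\mathbb{L}_{\overline{E}^+/E^+}$ is concentrated in degree $0$) produces a short exact sequence of $\overline{E}^+$-modules
$$0 \longrightarrow M \otimes_{E^+} \overline{E}^+ \longrightarrow \Omega^1_{\overline{E}^+/C^+} \longrightarrow \Omega^1_{\overline{E}^+/E^+} \longrightarrow 0.$$
By faithful flatness, it therefore suffices to prove that $\Omega^1_{\overline{E}^+/C^+}$ is torsionfree over $\overline{E}^+$.

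The valuation ring $\overline{E}^+$ is itself a valuation ring extension of $C^+$, so Proposition~\ref{Propositionquasismoothnessvaluationrings} applies again and yields that $\Omega^1_{\overline{E}^+/C^+}$ is $p$\nobreakdash-torsionfree. The key complementary point---which I expect to be the main subtlety---is to show that $\Omega^1_{\overline{E}^+/C^+}$ is also $p$\nobreakdash-divisible. Indeed, for any $b \in \overline{E}^+$ the element $b^{1/p}$ exists in $\overline{E}$ by algebraic closedness, and it lies in $\overline{E}^+$ because its valuation is $v(b)/p \geq 0$; the identity
$$db = d\bigl((b^{1/p})^p\bigr) = p\,(b^{1/p})^{p-1}\,d(b^{1/p})$$
then places $db$ in $p\,\Omega^1_{\overline{E}^+/C^+}$, and since such differentials generate the whole module over $\overline{E}^+$, this proves $p$\nobreakdash-divisibility. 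Being simultaneously $p$\nobreakdash-torsionfree and $p$\nobreakdash-divisible, $\Omega^1_{\overline{E}^+/C^+}$ carries a natural module structure over $\overline{E}^+[1/p] = \overline{E}$; as $\overline{E}$ is a field, any such module is automatically torsionfree over $\overline{E}^+$, which completes the torsionfreeness argument and allows Lemma~\ref{lemmacgvaluationring} to be applied.
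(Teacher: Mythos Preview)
Your argument is correct, but it takes a substantial detour where the paper uses a one-line observation. The paper simply notes that $\Omega^1_{E^+/C^+}$ is $p$-torsionfree (Proposition~\ref{Propositionquasismoothnessvaluationrings}) and then invokes directly that in a $p$-adically separated valuation ring $E^+$, any $p$-torsionfree module is already torsionfree: every nonzero $a \in E^+$ lies outside $p^n E^+$ for some $n$, hence (valuation ring) $a \mid p^n$, so $a$-torsion is contained in $p^n$-torsion. That immediately puts you in a position to apply Lemma~\ref{lemmacgvaluationring}, and the rest is as you wrote.

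Your route through $\overline{E}^+$ recovers the same conclusion, but note that the step $\overline{E}^+[1/p] = \overline{E}$ is not as innocent as you present it: for higher-rank valuation rings this equality can fail, and what makes it hold here is precisely that $\overline{E}^+$ inherits $p$-adic separatedness from $E^+$ (the value group of $\overline{E}$ is the divisible hull of that of $E$, so the archimedean property of $v(p)$ persists). Once you supply that justification, you are essentially using the same key fact the paper uses---only wrapped inside an extra faithfully flat descent. So the proposal works, but the $p$-divisibility trick and the passage to $\overline{E}^+$ buy nothing over the direct argument.
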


\begin{proof}
    We apply Lemma~\ref{lemmacgvaluationring} to the $E^+$-module $\Omega^1_{E^+/C^+}$. The $E^+$-module $\Omega^1_{E^+/C^+}$ is $p$\nobreakdash-torsionfree (Proposition~\ref{Propositionquasismoothnessvaluationrings}), and thus torsionfree since $E^+$ is $p$\nobreakdash-adically separated. The localisation at a prime ideal has trivial cotangent complex, so the canonical map $\Omega^1_{E^+/C^+} \otimes_{E^+} E \rightarrow \Omega^1_{E/C}$ is an isomorphism. Moreover, the $E$-vector space $\Omega^1_{E/C}$ is finite dimensional since $C \rightarrow E$ is a field extension of finite type. The reduction modulo $p$ of the $E^+$-module $\Omega^1_{E^+/C^+}$ is the $E^+/p$-module $\Omega^1_{(E^+/p)/(C^+/p)}$, hence the conclusion.
\end{proof}

We now use this result on the projective dimension of $\Omega^1_{(E^+/p)/(C^+/p)}$ to lift the $C^+/p$-algebra $E^+/p$ to a valuation ring extension $E^{+\flat}$ of $C^{+\flat}$.

\begin{notation}\label{Notationalmostmaths}
    Let $C^+$ be a mixed-characteristic valuation ring whose $p$\nobreakdash-adic completion is a perfectoid ring. Denote by $$C^{+\flat}:=\lim\limits_{x\mapsto x^p} C^+/p$$ the tilt of this perfectoid ring. The ring $C^{+\flat}$ is a perfect valuation ring of characteristic $p$, which is $d$\nobreakdash-adically complete for some element $d \in C^{+\flat}$ such that there is a natural ring isomorphism $$C^+/p \cong C^{+\flat}/d.$$
\end{notation}

We will need the following to ensure that the flat lift $E^{+\flat}$ produced by deformation theory is actually a valuation ring.

\begin{lemma}\label{Lemmaflatimpliesvaluationring}
    Let $C^+$ be a mixed-characteristic valuation ring whose $p$\nobreakdash-adic completion is a perfectoid ring. Let $E^+$ be a valuation ring extension of $C^+$ and $E^{+\flat}$ a $d$\nobreakdash-complete flat $C^{+\flat}$-algebra. Assume there is an isomorphism of \hbox{$C^{+\flat}/d=C^+/p$-algebras} $E^{+\flat}/d \cong E^+/p$. Then $E^{+\flat}$ is a valuation ring extension of $C^{+\flat}$.
\end{lemma}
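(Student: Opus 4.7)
The plan is to verify that $E^{+\flat}$ is both an integral domain and a ring in which, for any two nonzero elements, one divides the other; these two properties together characterise valuation rings. The approach is to construct an explicit valuation $w^\flat$ on $E^{+\flat}$ by lifting the valuation $w$ on $E^+$ through the $d$-adic expansion of elements of $E^{+\flat}$, so that the total order on its value group forces both the domain property (since $w^\flat(x) = \infty$ iff $x=0$) and the valuation-ring divisibility.

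First I set up the $d$-adic expansion. The $C^{+\flat}$-flatness of $E^{+\flat}$, together with $C^{+\flat}$ being a valuation ring (hence a domain), implies that $E^{+\flat}$ is $d$-torsionfree; combined with $d$-adic completeness, this gives $d$-adic separatedness, so every nonzero $x\in E^{+\flat}$ admits a unique decomposition $x=d^{n}u$ with $n\geq 0$ and $u\in E^{+\flat}\setminus dE^{+\flat}$, and the image $\bar u\in E^{+\flat}/d\cong E^+/p$ is then nonzero.

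Next I define the candidate valuation. For any nonzero $\bar u\in E^+/p$, any two lifts of $\bar u$ to elements of $E^+$ with valuation strictly less than $w(p)$ have the same valuation, since they differ by an element of $pE^+$ of strictly larger valuation; denote this common value by $\bar w(\bar u)\in [0,w(p))$. I set $w^\flat(d^n u):=n\,w(p)+\bar w(\bar u)$, with values in the value group $\Gamma_E$ of $E$, and then the principal ideals of $E^{+\flat}$ will be totally ordered by $w^\flat$.

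The hard part will be verifying multiplicativity $w^\flat(xy)=w^\flat(x)+w^\flat(y)$. When $\bar u\bar v\neq 0$ in $E^+/p$, this is immediate from multiplicativity of $w$ on $E^+$ via $\overline{uv}=\bar u\bar v$. The delicate case is $\bar u\bar v=0$, i.e.\ $\bar w(\bar u)+\bar w(\bar v)\geq w(p)$; then $uv\in dE^{+\flat}$, and one must identify the ``next $d$-adic digit'' $\overline{uv/d}\in E^+/p$ with $\overline{\tilde u\tilde v/p}$, the latter having valuation exactly $\bar w(\bar u)+\bar w(\bar v)-w(p)$ by the valuation-ring structure of $E^+$. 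This will require linking the multiplicative structure of $E^{+\flat}$ modulo $d^2$ with the multiplicative structure of $E^+$ modulo $p^2$ through the $C^{+\flat}/d^2$-algebra structure, exploiting flatness of $E^{+\flat}$ over $C^{+\flat}$ and the compatibility of the isomorphism $E^{+\flat}/d\cong E^+/p$ with the $C^+/p$-algebra structures on both sides. Once multiplicativity is established, the ultrametric inequality for $w^\flat$ transfers from the ultrametric property of $w$ on $E^+$ via the same identifications, so $w^\flat$ is a valuation and $E^{+\flat}$ a valuation ring extension of $C^{+\flat}$.
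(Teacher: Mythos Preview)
Your proposal has a genuine gap at precisely the point you flag as ``the hard part.'' You need to identify $\overline{uv/d}\in E^{+\flat}/d$ with $\overline{\tilde u\tilde v/p}\in E^+/p$, and you propose to do this by ``linking the multiplicative structure of $E^{+\flat}$ modulo $d^2$ with the multiplicative structure of $E^+$ modulo $p^2$.'' But no such link is available from the hypotheses: $E^{+\flat}/d^2$ is an algebra over $C^{+\flat}/d^2$, which has characteristic $p$, whereas $E^+/p^2$ is an algebra over $C^+/p^2$, which does not. There is no ring map between $C^{+\flat}/d^2$ and $C^+/p^2$ extending the identification $C^{+\flat}/d\cong C^+/p$, so there is no way to compare the two square-zero extensions. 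More concretely, the class of $uv/d$ modulo $d$ depends on the ring structure of $E^{+\flat}/d^2$, which is a deformation-theoretic datum not determined by $E^{+\flat}/d\cong E^+/p$; different flat lifts of $E^+/p$ along $C^{+\flat}/d^2\twoheadrightarrow C^{+\flat}/d$ will give different answers. So multiplicativity of your $w^\flat$ cannot be verified this way, and without it you cannot deduce either the divisibility property or the integral-domain property.

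The paper's argument avoids this obstacle entirely by exploiting a feature you do not use: perfectness of $C^{+\flat}$, which supplies the elements $d^{1/p^n}$. The key step is to show directly that any $f\in E^{+\flat}\setminus dE^{+\flat}$ divides $d$. One compares $f$ with $d^\alpha$ for $\alpha\in\Z[\tfrac1p]\cap(0,1)$ inside $E^{+\flat}/d\cong E^+/p$ (where divisibility is understood because $E^+$ is a valuation ring), and then lifts the resulting divisibility relation to $E^{+\flat}$ using $d$-adic completeness; a short secondary argument handles the case where every $d^{1/p}$ divides $f$. Once one knows that every $f\notin dE^{+\flat}$ divides $d$, total divisibility of arbitrary pairs and the integral-domain property follow quickly. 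If you want to repair your approach, this is the missing ingredient: use the fractional powers $d^{1/p^n}$ to get divisibility directly, rather than trying to read the second $d$-adic digit off from $E^+/p^2$.
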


\begin{proof}
    We want to prove that $E^{+\flat}$ is an integral domain such that for any elements $f$ and $g$ in $E^{+\flat}$, either $f$ divides $g$ or $g$ divides $f$.

    The valuation ring $C^{+\flat}$ is perfect by definition. Denote by $(d^{1/p^n})_{n \in \N} \in (C^{+\flat})^{\N}$ a compatible system of $p$\nobreakdash-power roots of $d \in C^{+\flat}$. A module over the valuation ring $C^{+\flat}$ is flat if and only if it is torsionfree, so the $C^{+\flat}$-module $E^{+\flat}$ is $d^{1/p^n}$-torsionfree for each $n \geq 0$. In the following we identify the rings $C^+/p$ and $C^{+\flat}/d$; in particular any (rational) power $d^\alpha$ of $d \in C^{+\flat}$ defines an element of~$C^+/p$, which we also denote by $d^\alpha$. 

    Let $f$ and $g$ be elements of the ring $E^{+\flat}$. Let us prove that either $f$ divides $g$ or $g$ divides $f$. The ring $E^{+\flat}$ is $d$-torsionfree, so we can assume that $d$ does not divide one of $f$ and $g$. We first assume that $d$ does not divide $f$ in $E^{+\flat}$, and prove that $f$ divides $d$. Because $E^+$ is a valuation ring, and for every $\alpha \in \Z[\tfrac{1}{p}]$, $\alpha < 1$, either $f$ divides $d^\alpha$, or $d^{\alpha}$ divides $f$ in the ring $E^{+\flat}/d$. If $f$ divides $d^\alpha$ for such an $\alpha$ in $E^{+\flat}/d$, then we can write $fh=d^\alpha + dt$ for some elements $h,t \in E^{+\flat}$. Because $E^{+\flat}$ is $d$-adically complete it is also $d^{1-\alpha}$-adically complete. In particular $1 + d^{1-\alpha}t$ is a unit in $E^{+\flat}$, so $f$ divides $d^{\alpha}$, and thus $f$ divides $d$. If $f$ does not divide $d^{\alpha}$ for any $\alpha \in \Z[\tfrac{1}{p}]$, $\alpha < 1$, then in particular $d^{1/p}$ divides $f$ in $E^{+\flat}/d$, and thus also in $E^{+\flat}$. Because $E^{+\flat}$ is $d^{1/p}$-torsionfree, we can consider the element $f/d^{1/p} \in E^{+\flat}$, which divides $d^{1-1/p}$ in $E^{+\flat}/d$. And we argue as in the previous case to prove that $f/d^{1/p}$ divides $d^{1-1/p}$ in $E^{+\flat}$.
    
    Going back to the elements $f$ and $g$ of $E^{+\flat}$, we deduce the following: if $f$ is not divisible by $d$ and $g$ is divisible by $d$, then $f$ divides $g$. We now assume that $d$ does not divide $f$ or $g$. Without loss of generality and because $E^+$ is a valuation ring, there is an element $h \in E^+/p$ such that $f=gh$ in $E^{+\flat}/d$. We can then write $f=g\tilde{h}+dt$ for some elements $\tilde{h},t \in E^{+\flat}$. By the previous paragraph, and because $d$ does not divide $g$, we know that $g$ divides $d$. So $g$ divides $f$.

    We now prove that the ring $E^{+\flat}$ is an integral domain. Given two elements $f$ and $g$ of the ring~$E^{+\flat}$, then without loss of generality we can assume that $g$ divides $f$ using the divisibility property we just proved. Assume now $fg=0$. The ring $E^{+\flat}$ is $d$\nobreakdash-adically separated, so if $f$ is nonzero then $f$ divides $d^n$ for some integer $n \geq 1$. But then $d^{2n}=0$, which is impossible since $E^{+\flat}$ is $d$\nobreakdash-torsionfree. So $f=0$, and the ring $E^{+\flat}$ is an integral domain. By assumption $E^{+\flat}$ is a flat $C^{+\flat}$-algebra, so $E^{+\flat}$ is a valuation ring extension of $C^{+\flat}$.
\end{proof}

\begin{theorem}\label{Propositiondeformationvaluationringwhenfinitetypestarid}
    Let $C^+$ be a mixed-characteristic valuation ring whose $p$\nobreakdash-adic completion is a perfectoid ring, and $E^+$ be a valuation ring extension of $C^+$. If the $E^+/p$-module $\Omega^1_{(E^+/p)/(C^+/p)}$ has projective dimension at most $1$, then there exists a $d$\nobreakdash-complete valuation ring extension $E^{+\flat}$ of~$C^{+\flat}$ such that there is a ring isomorphism $E^{+\flat}/d \cong E^+/p$.
\end{theorem}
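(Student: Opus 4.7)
The plan is to construct $E^{+\flat}$ as the inverse limit of a compatible system of flat lifts of $E^+/p$ along the $d$-adic filtration of $C^{+\flat}$, using Illusie's deformation theory (Theorem~\ref{TheoremIllusieondeformations}). Start from $E_1 := E^+/p$, viewed as a flat $C^{+\flat}/d \cong C^+/p$-algebra (flatness by Proposition~\ref{Propositionquasismoothnessvaluationrings}). Inductively, assume $E_n$ is a flat $C^{+\flat}/d^n$-algebra with $E_n/d \cong E^+/p$, and attempt to lift it to a flat $C^{+\flat}/d^{n+1}$-algebra $E_{n+1}$ with $E_{n+1}/d^n \cong E_n$. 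The obstruction class lives in $\text{Ext}^2_{E_n}(\mathbb{L}_{E_n/(C^{+\flat}/d^n)}, E_n \otimes_{C^{+\flat}/d^n} (d^n)/(d^{n+1}))$.

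The core of the argument is the vanishing of this obstruction; this is the main obstacle of the proof and the precise step where the projective dimension hypothesis enters. Since $d$ is a nonzerodivisor in $C^{+\flat}$, multiplication by $d^n$ identifies $(d^n)/(d^{n+1}) \cong C^{+\flat}/d$, so the coefficient module simplifies to $E_n/d \cong E^+/p$. By flatness of $E_n$ over $C^{+\flat}/d^n$, base change for the cotangent complex yields $\mathbb{L}_{E_n/(C^{+\flat}/d^n)} \otimes^\mathbb{L}_{E_n} E^+/p \simeq \mathbb{L}_{(E^+/p)/(C^+/p)}$. The tensor-Hom adjunction along the quotient $E_n \twoheadrightarrow E^+/p$ then identifies the obstruction group with $\text{Ext}^2_{E^+/p}(\mathbb{L}_{(E^+/p)/(C^+/p)}, E^+/p)$. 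By Proposition~\ref{Propositionquasismoothnessvaluationrings}, this cotangent complex is concentrated in degree $0$ as $\Omega^1_{(E^+/p)/(C^+/p)}$, and the projective-dimension-at-most-$1$ hypothesis forces this $\text{Ext}^2$ group to vanish, so the lift $E_{n+1}$ exists.

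Once the inductive construction is complete, set $E^{+\flat} := \varprojlim_n E_n$. The surjective transition maps ensure the Mittag--Leffler condition, so $E^{+\flat}$ is discrete, $d$-adically complete, and satisfies $E^{+\flat}/d^n \cong E_n$ for all $n \geq 1$; in particular $E^{+\flat}/d \cong E^+/p$. Flatness of $E^{+\flat}$ over the valuation ring $C^{+\flat}$ amounts to torsion-freeness: the flatness of each $E_n$ over $C^{+\flat}/d^n$ combined with $d$-adic separatedness yields $d$-torsionfreeness of $E^{+\flat}$, and a standard iteration comparing any nonzero $c \in C^{+\flat}$ to powers of $d$ via divisibility in the valuation ring (the non-terminating case being excluded by $d$-adic separatedness of $C^{+\flat}$) upgrades this to $c$-torsionfreeness. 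Lemma~\ref{Lemmaflatimpliesvaluationring} then promotes $E^{+\flat}$ to a valuation ring extension of $C^{+\flat}$, completing the proof.
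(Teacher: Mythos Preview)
Your proposal is correct and follows the same overall strategy as the paper: build a tower of flat lifts $E_n$ over $C^{+\flat}/d^n$ via Illusie's deformation theory, take the inverse limit, check $d$-torsionfreeness, and invoke Lemma~\ref{Lemmaflatimpliesvaluationring}. The one notable difference is in how you kill the obstruction class. The paper proves inductively that $\mathbb{L}_{E_n/(C^{+\flat}/d^n)}$ is itself discrete of projective dimension $\leq 1$ over $E_n$, using the distinguished triangle coming from $0 \to E_1 \xrightarrow{d^{n-1}} E_n \to E_{n-1} \to 0$; you instead use base change for the cotangent complex along $C^{+\flat}/d^n \to C^{+\flat}/d$ together with derived tensor--Hom adjunction to identify the obstruction group directly with $\text{Ext}^2_{E^+/p}(\Omega^1_{(E^+/p)/(C^+/p)}, E^+/p)$. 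Your route is slightly more economical since it avoids tracking the projective dimension of the cotangent complex at every stage, but both arguments are short and essentially equivalent. Your treatment of flatness of $E^{+\flat}$ (reducing $c$-torsionfreeness to $d$-torsionfreeness via the valuation on $C^{+\flat}$ and $d$-adic separatedness) makes explicit a step the paper leaves implicit.
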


The projective dimension hypothesis is satisfied in particular if the induced field extension $C \rightarrow E$ is of finite type and $E^+$ is of finite rank (Corollary~\ref{Corollaryalmostprojective}). It is also satisfied for the $p$\nobreakdash-adic completion of such an extension, as it depends only on its reduction modulo $p$.

\begin{proof}
    We prove by induction on~$n \geq 1$ that there are, for all $1 \leq m \leq n$, flat $C^{+\flat}/d^m$-algebras~$E^{+\flat}_m$ with isomorhisms $E^{+\flat}_m/d \xrightarrow{\cong} E^+/p$ and with discrete cotangent complexes $\mathbb{L}_{E^{+\flat}_m/(C^{+\flat}/d^m)}$ having projective dimension at most $1$. By Theorem~\ref{TheoremIllusieondeformations}, the vanishing of the obstructions in this process can be expressed in terms of the cotangent complexes $\mathbb{L}_{(E^{+\flat}_n)/(C^{+\flat}/d^n)}$. For $n=1$, define $E_1^{+\flat}$ as the flat $C^{+\flat}/d$-algebra $E^+/p$.

    Now let $n \geq 1$ be an integer, and assume we are given for each $1 \leq m \leq n$ a flat $C^{+\flat}/d^m$\nobreakdash-algebra~$E^{+\flat}_m$, with isomorphisms $E^{+\flat}_m/d^{m-1} \cong E^{+\flat}_{m-1}$ for all $2 \leq m \leq n$. We claim that the cotangent complex $\mathbb{L}_{E^{+\flat}_n/(C^{+\flat}/d^n)}$ is in degree $0$, given by a $E^{+\flat}_n$-module of projective dimension at most $1$. Tensoring the short exact sequence
    $$0 \longrightarrow E_1^{+\flat} \xlongrightarrow{d^{n-1}} E_n^{+\flat} \longrightarrow E^{+\flat}_{n-1} \longrightarrow 0$$
    of $C^{+\flat}/d^n$-modules by the cotangent complex $\mathbb{L}_{E^{+\flat}_n/(C^{+\flat}/d^n)}$ induces a distinguished triangle
    $$\mathbb{L}_{(E^+/p)/(C^+/p)} \longrightarrow \mathbb{L}_{E^{+\flat}_n/(C^{+\flat}/d^n)} \longrightarrow \mathbb{L}_{E^{+\flat}_{n-1}/(C^{+\flat}/d^{n-1}} \longrightarrow$$
    in the derived category $\mathcal{D}(E^{+\flat}_n)$. The discreteness and the projective dimension of $\mathbb{L}_{E^{+\flat}_n/(C^{+\flat}/d^n)}$ being at most $1$ thus reduce inductively to the case $n=1$. For $n=1$, $\mathbb{L}_{(E^+/p)/(C^+/p)}$ is in degree $0$ by Proposition~\ref{Propositionquasismoothnessvaluationrings}, and the $E^+/p$-module $\Omega^1_{(E^+/p)/(C^+/p)}$ has projective dimension at most $1$.

    In particular, the $\text{Ext}^2$-group in the deformation theory Theorem~\ref{TheoremIllusieondeformations} vanishes, and so does the deformation class. This implies that there is a flat $C^{+\flat}/d^{n+1}$-algebra $E^{+\flat}_{n+1}$ with an isomorphism $E^{+\flat}_{n+1}/d^n \xrightarrow{\cong} E^{+\flat}_n$. This concludes the induction.

    Let $E^{+\flat}$ be the inverse limit of the system $(E^{+\flat}_n)_{n \geq 1}$. In particular, the $C^{+\flat}$-algebra $E^{+\flat}$ is $d$\nobreakdash-adically complete. For each integer $n \geq 2$, there is a natural exact sequence of $C^{+\flat}$-modules:
    $$0 \longrightarrow E^{+\flat}_{n-1} \xlongrightarrow{d} E^{+\flat}_n \longrightarrow E^{+\flat}_1.$$
    Passing to the inverse limit over $n \geq 2$ implies that the $C^{+\flat}$-modules $E^{+\flat}$ has no $d$-torsion, and is thus flat. By Lemma~\ref{Lemmaflatimpliesvaluationring}, $E^{+\flat}$ is thus a valuation ring extension of $C^{+\flat}$.
\end{proof}

\begin{proof}[Proof of Theorem~\ref{mainthmvaluationrings}.]
    As a flat morphism of integral domains, the valuation ring extension \hbox{$C^+ \rightarrow E^+$} is $p$\nobreakdash-Cartier smooth if and only if the morphism $C^+/p \rightarrow E^+/p$ is Cartier smooth (Remark~\ref{RemarkvaluationringssufficesmodpCSm}). The result in characteristic $p$ is Theorem~\ref{TheoremGabberandGabberRamero}, and the result when $p$ is invertible in $E^+$ is trivial. In mixed-characteristic, the cotangent smoothness of the morphism $C^+/p \rightarrow E^+/p$ is Proposition~\ref{Propositionquasismoothnessvaluationrings}. The inverse Cartier map depends only on the morphism \hbox{$C^+/p \rightarrow E^+/p$}, so we can replace $C^+$ and $E^+$ by their $p$\nobreakdash-adically separated quotients to prove the Cartier isomorphism. We assume now that $C^+$ is a mixed-characteristic $p$\nobreakdash-adically separated valuation ring whose $p$\nobreakdash-adic completion is a perfectoid ring, that $E^+$ is a $p$\nobreakdash-adically separated valuation ring extension of $C^+$, and prove the Cartier isomorphism for the morphism $C^+/p \rightarrow E^+/p$. The fraction field $E$ of the valuation ring $E^+$ is the filtered union of the finite type field extensions $E_i$ of $C$ contained in $E$. The valuation ring $E^+$ is the filtered union of the associated valuation rings $E^+_i$ and the inverse Cartier map commutes with filtered colimits, so we can assume that the field extension $C \rightarrow E$ is of finite type. 
    
    If the valuation ring $E^+$ is of finite rank, then there exists a valuation ring extension $C^{+\flat} \rightarrow E^{+\flat}$ such that $E^{+\flat}/d$ is isomorphic to $E^+/p$ (Theorem~\ref{Propositiondeformationvaluationringwhenfinitetypestarid}). The Cartier isomorphism for $C^{+\flat} \rightarrow E^{+\flat}$ (Theorem~\ref{TheoremGabberandGabberRamero}) then implies the Cartier isomorphism for $C^+/p \rightarrow E^+/p$ by base change along the morphism \hbox{$C^{+\flat} \rightarrow C^{+\flat}/d$} (Lemma~\ref{LemmabasechangeforCartiersmoothness}). 
    
    In general ({\it i.e.}, if the valuation ring $E^+$ is not necessarily of finite rank), the ideal $\mathfrak{p} := \sqrt{(p)}$ of~$E^+$ is a prime ideal, the localisation $E^+_{\mathfrak{p}}$ of $E^+$ at this prime ideal is a rank $1$ valuation ring extension of $C^+$ and the commutative diagram
    $$\xymatrix{ 
    	\relax
    	E^+ \ar[r] \ar[d] & E^+_{\mathfrak{p}} \ar[d]
    	\\
    	E^+/\mathfrak{p} \ar[r]& E^+_{\mathfrak{p}}/\mathfrak{p}E^+_{\mathfrak{p}} 	
    }$$
    is a Milnor square (\cite[Lemma~$3.12$]{huber-klawitter_differential_2018}). By \cite[Lemma~$3.14$]{huber-klawitter_differential_2018} and its proof, this Milnor square induces, for all integers $n \geq 0$, compatible short exact sequences
    $$0 \rightarrow \Omega^n_{(E^+/p)/(C^+/p)} \rightarrow \Omega^n_{(E^+_{\mathfrak{p}}/p)/(C^+/p)} \oplus \Omega^n_{(E^+/\mathfrak{p})/(C^+/\mathfrak{p}} \rightarrow \Omega^n_{(E^+_{\mathfrak{p}}/\mathfrak{p}E^+_{\mathfrak{p}})/(C^+/\mathfrak{p})} \rightarrow 0$$
    of $C^+$-modules by \cite[Lemma $3.14$]{huber-klawitter_differential_2018}. These short exact sequences induce a long exact sequence of cohomology groups:
    $$\cdots \rightarrow \text{H}^n(\Omega^{\bullet}_{(E^+/p)/(C^+/p)}) \rightarrow \text{H}^n(\Omega^\bullet_{(E^+_{\mathfrak{p}}/p)/(C^+/p)}) \oplus \text{H}^n(\Omega^\bullet_{(E^+/\mathfrak{p})/(C^+/\mathfrak{p})}) \rightarrow \text{H}^n(\Omega^\bullet_{(E^+_{\mathfrak{p}}/\mathfrak{p}E^+_{\mathfrak{p}})/(C^+/\mathfrak{p})}) \rightarrow \cdots$$
    The morphisms $C^+/\mathfrak{p} \rightarrow E^+/\mathfrak{p}$ and $C^+/\mathfrak{p} \rightarrow E^+_{\mathfrak{p}}/\mathfrak{p}E^+_{\mathfrak{p}}$ are extensions of characteristic $p$ valuation rings, and in particular satisfy the Cartier isomorphism (Theorem~\ref{TheoremGabberandGabberRamero}). This implies, via the inverse Cartier map relating the previous two exact sequences, that the morphism $C^+/p \rightarrow E^+/p$ satisfies the Cartier isomorphism if and only if the morphism $C^+/p \rightarrow E^+_{\mathfrak{p}}/p$ does. We proved in the previous paragraph that the morphism $C^+/p \rightarrow E^+_{\mathfrak{p}}/p$ satisfies the Cartier isomorphism. Hence the morphism $C^+/p \rightarrow E^+/p$ satisfies the Cartier isomorphism, which concludes the proof.
\end{proof}

\begin{remark}\label{remarkvaluationringliftuniqueness}
    Let $C^+$ be a mixed-characteristic rank $1$ perfectoid valuation ring, and $E^+$ a rank $1$ valuation ring extension of $C^+$, such that the induced field extension $C \rightarrow E$ is of finite type. In this remark, we sketch the proof of the fact that the $d$-adically complete valuation ring $E^{+\flat}$, introduced in Theorem~\ref{mainthmvaluationrings}, is unique up to isomorphism. To do so, we consider almost mathematics (\cite{gabber_almost_2003, abbes_p-adic_2016}) with respect to the pair $(C^{+\flat},\sqrt{(d)})$ (Notation~\ref{Notationalmostmaths}). The proof of Lemma~\ref{lemmacgvaluationring}, where we use \cite[Lemma~$2.4.15$]{gabber_almost_2003}, can be adapted to prove that the $E^+/p$-module $\Omega^1_{(E^+/p)/(C^+/p)}$ is almost free, and in particular almost projective. The almost deformation theory \cite[Corollary~$3.2.11$]{gabber_almost_2003} then implies that the $d$-adically complete flat lift $E^{+\flat}$ of Theorem~\ref{Propositiondeformationvaluationringwhenfinitetypestarid} must be unique when seen in the category of almost $C^{+\flat}$-algebras. One can check that the functor
    $$(-)_\ast : C^+\text{-Alg} \xlongrightarrow{(-)^a} (C^+)^a\text{-Alg} \xlongrightarrow{(-)_\ast} C^+\text{-Alg},$$
    where the second map is the right adjoint to the localisation functor from $C^+$-algebras to almost $C^+$\nobreakdash-algebras (\cite[Proposition~$2.2.13\,(ii)$]{gabber_almost_2003}), is the identity on rank $1$ valuation rings extensions of~$C^{+\flat}$. By construction, the valuation ring $E^{+\flat}$ is of rank $1$ because the valuation ring $E^+$ is of rank~$1$, hence the result.
\end{remark}

\section{The syntomic-étale comparison theorem}\label{Sectionsyntomicetalecomparison}

\vspace{-\parindent}
\hspace{\parindent}

In this section we prove Theorem~\ref{Theoremsyntomicetalecomparisonintro} (Theorems~\ref{Theoremsyntomicetalecomparison} and \ref{Theoremsyntomicetalecomparison2}), comparing the syntomic cohomology of a $p$\nobreakdash-Cartier smooth algebra over a perfectoid ring to the étale cohomology of its generic fibre. This comparison theorem was also proved in \cite{kelly_k-theory_2021,luders_milnor_2021} in characteristic $p$, and in \cite{bhatt_syntomic_2022} for $p$\nobreakdash-torsionfree $F$\nobreakdash-smooth schemes, using different methods.

\subsection{Syntomic cohomology}

\vspace{-\parindent}
\hspace{\parindent}

The syntomic complexes $\Z_p(i)^\text{syn}$ were first defined in \cite[Section $7.4$]{bhatt_topological_2019} for $p$\nobreakdash-complete $p$\nobreakdash-quasi\-syn\-tomic rings in terms of $p$\nobreakdash-completed topological cyclic homology. Another equivalent definition was given in \cite{bhatt_prisms_2022} in terms of absolute prismatic cohomology, and a generalisation for general schemes was developed in \cite[Section $8.4$]{bhatt_absolute_2022}. We will be interested mainly in $p$\nobreakdash-complete algebras over a perfectoid ring, whose syntomic complexes can be defined in terms of relative prismatic cohomology (Definition~\ref{Definitionsyntomiccomplexesoverperfectoid}).

The ideal $I \subset A$ of a perfect prism (subsection \ref{subsectionFsmoothness} or \cite[Definition~$3.2\,(2)$]{bhatt_prisms_2022}) is necessarily principal, generated by a nonzerodivisor $d \in A$. When defining a perfect prism $(A,(d))$ in 
this section, we implicitly fix a choice of generator $d \in A$. When the base prism $(A,I)$ is perfect, one can define a Nygaard filtration $\mathcal{N}^{\geq \star} \Prism_{S/A}$ on the 
prismatic complex $\Prism_{S/A}$ (without Frobenius twist), and we denote by 
$$\phi : \mathcal{N}^{\geq \star} \Prism_{S/A} \xlongrightarrow{\sim} \mathcal{N}^{\geq \star} \Prism^{(1)}_{S/A} \xlongrightarrow{\phi} I^\star \Prism_{S/A}$$
the Frobenius on the relative prismatic complex (see subsection \ref{subsectionreviewrelativeprismaticcohomology} for the definition of the second map).
Following \cite[Section $12$]{bhatt_prisms_2022} and for each $i \geq 0$, a divided Frobenius map 
$$\phi_i = \text{\textquotedblleft} \frac{\phi}{d^i} \text{\textquotedblright} : \mathcal{N}^{\geq i} \Prism_{S/A} \longrightarrow \Prism_{S/A}$$
is defined and sits in the commutative diagram
$$\xymatrix{
	\relax
	\mathcal{N}^{\geq i} \Prism_{S/A} \ar[rd]_{\phi} \ar[r]^{\phi_i} & \Prism_{S/A} \ar[d]^{d^i} \\ 
	& d^i \Prism_{S/A}. \\
}$$

\begin{definition}[Syntomic complexes]\label{Definitionsyntomiccomplexesoverperfectoid}
    Let $(A,(d))$ be a perfect prism, and $S$ a $p$\nobreakdash-complete $A/d$-algebra. For each integer $i \geq 0$, the syntomic complex $\Z_p(i)^\emph{syn}(S) \in \mathcal{D}^{\geq 0}(\Z_p)$ is 
    $$\Z_p(i)^\emph{syn}(S) := \emph{hofib}(\phi_i - 1 : \mathcal{N}^{\geq i} \Prism_{S/A} \longrightarrow \Prism_{S/A}).$$
    For each integer $n \geq 1$, also define $\Z/p^n(i)^\emph{syn}(S):=\Z_p(i)^\emph{syn}(S) \otimes_{\Z_p}^\mathbb{L} \Z/p^n$.
\end{definition}

Over a scheme in which $p$ is invertible, the object $\Z_p(i)$, called {\it $p$\nobreakdash-adic étale Tate twist}, will denote the (pro-)étale sheaf defined as the inverse limit over $n\geq 1$ of the étale sheaves $\mu_{p^n}^{\otimes i}$. Following \cite[Section~$8.3$]{bhatt_absolute_2022}, there is a map comparing the syntomic complexes and the $p$-adic étale Tate twists.

\begin{construction}[The syntomic-étale comparison map, \cite{bhatt_absolute_2022}]\label{Constructionsyntomicétalecomparison}
    Let $S$ be a $p$\nobreakdash-complete ring ({\it e.g.}, a $p$\nobreakdash-complete algebra over a perfectoid ring). For every integer~\hbox{$i \geq 0$}, there is a canonical map
    $$\gamma_\emph{syn}^\emph{ét}\{i\} : \Z_p(i)^\emph{syn}(S) \longrightarrow R\Gamma_\emph{proét}(\emph{Spec}(S[\tfrac{1}{p}]),\Z_p(i)).$$
	For every integer $n \geq 1$, one can also consider the derived reduction modulo $p^n$ of this canonical map
	$$\gamma_\emph{syn}^\emph{ét}\{i\}/p^n : \Z/p^n(i)^\emph{syn}(S) \longrightarrow R\Gamma_\emph{ét}(\emph{Spec}(S[\tfrac{1}{p}]),\mu_{p^n}^{\otimes i}).$$
\end{construction}

\begin{remark}\label{RemarkNygaarddinverted}
    Let $(A,(d))$ be a perfect prism, and $S$ an $A/d$-algebra. For each $i\geq 0$, the canonical map $$(\mathcal{N}^{\geq i} \Prism_{S/A})[\tfrac{1}{d}]/p \longrightarrow \Prism_{S/A}[\tfrac{1}{d}]/p$$
    is an equivalence in the derived category $\mathcal{D}(A)$. By left Kan extension and $p$\nobreakdash-quasisyntomic descent, it suffices to prove it for quasiregular semiperfectoid $A/d$-algebras $S$, for which there are natural inclusions of $(p,d)$-completely flat $A$-modules $\phi^{-1}_A(d)^i \Prism_{S/A} \subseteq \mathcal{N}^{\geq i} \Prism_{S/A} \subseteq \Prism_{S/A}$.
    The result follows by using the equality $(\phi^{-1}_A(d))^p = d$ in $A/p$.
\end{remark}

\begin{theorem}[Prismatic-étale comparison]\label{Theoremprismaticétalecomparison}
    Let $(A,(d))$ be a perfect prism, and $S$ a $p$\nobreakdash-complete $A/d$\nobreakdash-algebra. Then for any integers $i\geq 0$ and $n\geq 1$, there are canonical identifications
    $$R\Gamma_{\emph{ét}}(\emph{Spec}(S[\tfrac{1}{p}]),\mu_{p^n}^{\otimes i}) \xlongrightarrow{\sim} \emph{hofib}(\phi_i - 1 : \Prism_{S/A}[\tfrac{1}{d}]/p^n \longrightarrow \Prism_{S/A}[\tfrac{1}{d}]/p^n)$$
    and
    $$R\Gamma_{\emph{proét}}(\emph{Spec}(S[\tfrac{1}{p}]),\Z_p(i)) \xlongrightarrow{\sim} \emph{hofib}(\phi_i - 1 : \Prism_{S/A}[\tfrac{1}{d}]^\wedge_p \longrightarrow \Prism_{S/A}[\tfrac{1}{d}]^\wedge_p).$$
\end{theorem}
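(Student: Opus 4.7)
The result is a version, formulated in terms of the divided Frobenius, of the étale comparison theorem for relative prismatic cohomology proved by Bhatt--Scholze \cite[Theorem~9.1]{bhatt_prisms_2022}. I focus on the proétale statement with $\Z_p(i)$-coefficients; the mod $p^n$ version follows by derived reduction modulo $p^n$, using that both sides are derived $p$-complete and that $\mu_{p^n}^{\otimes i} \simeq \Z_p(i) \otimes_{\Z_p}^{\mathbb{L}} \Z/p^n$ on $\text{Spec}(S[\tfrac{1}{p}])$.

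The plan is to reduce to the case where $S$ is a smooth $A/d$-algebra, and then invoke the smooth case. For the reduction, both sides behave well under $p$-completed sifted colimits: on the prismatic side, $\Prism_{S/A}$ is defined for general $A/d$-algebras by left Kan extension from smooth $A/d$-algebras (Subsection~\ref{subsectionreviewrelativeprismaticcohomology}), and after inverting $d$ the Nygaard filtration collapses (Remark~\ref{RemarkNygaarddinverted}), so $\text{hofib}(\phi_i - 1 : \Prism_{S/A}[\tfrac{1}{d}]^\wedge_p \to \Prism_{S/A}[\tfrac{1}{d}]^\wedge_p)$ is also left Kan extended. On the étale side, $R\Gamma_{\text{ét}}(\text{Spec}(-[\tfrac{1}{p}]), \mu_{p^n}^{\otimes i})$ commutes with filtered colimits of rings for each fixed $n \geq 1$. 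By $p$-completely quasisyntomic descent (\cite[Section~7]{bhatt_topological_2019}) it then suffices to check the statement when $S$ is smooth over $A/d$.

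For smooth $S$ and the untwisted case $i = 0$, the identification
$$R\Gamma_{\text{proét}}(\text{Spec}(S[\tfrac{1}{p}]), \Z_p) \simeq \text{hofib}\bigl(\phi - 1 : \Prism_{S/A}[\tfrac{1}{d}]^\wedge_p \longrightarrow \Prism_{S/A}[\tfrac{1}{d}]^\wedge_p\bigr)$$
is precisely \cite[Theorem~9.1]{bhatt_prisms_2022}, noting that $\mathcal{N}^{\geq 0} \Prism_{S/A} = \Prism_{S/A}$ and $\phi_0 = \phi$. For Tate twists $i \geq 1$, observe that on $\Prism_{S/A}[\tfrac{1}{d}]^\wedge_p$ the element $d$ is a unit, so $\phi_i = \phi/d^i$ and multiplication by $d^i$ gives an equivalence
$$\text{hofib}\bigl(\phi_i - 1\bigr) \simeq \text{hofib}\bigl(\phi - d^i : \Prism_{S/A}[\tfrac{1}{d}]^\wedge_p \longrightarrow \Prism_{S/A}[\tfrac{1}{d}]^\wedge_p\bigr).$$
The desired identification then follows from the compatibility of the Breuil--Kisin twist $A\{i\}$ with the Tate twist $\Z_p(i)$: in the perfect-prism setting, the tilt $(A/d)^\flat = A/p$ canonically contains a compatible system of $p^n$-th roots of unity, whose image under the untwisted étale comparison matches a power of $d$, and twisting by $A\{i\}$ replaces the Frobenius equation $\phi x = x$ with $\phi x = d^i x$.

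The main obstacle I anticipate is the careful bookkeeping of the Tate twist: verifying that the $d^i$ factor introduced by the divided Frobenius corresponds exactly to the $i$-th power of the cyclotomic character on the étale side, so that $\text{hofib}(\phi - d^i)$ recovers $\Z_p(i)$ rather than an untwisted copy of $\Z_p$. This is a perfectoid computation, carried out in \cite[Section~7]{bhatt_topological_2019} for $A_{\inf}$ and extending verbatim to general perfect prisms via the formalism of Breuil--Kisin twists developed in \cite{bhatt_absolute_2022}.
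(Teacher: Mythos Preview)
Your argument has a genuine gap at the Tate-twist step. You assert that ``the tilt $(A/d)^\flat = A/p$ canonically contains a compatible system of $p^n$-th roots of unity'', and use this to identify $\text{hofib}(\phi - d^i)$ with the $i$-th étale Tate twist. This is false for a general perfect prism: a perfectoid ring need not contain any nontrivial $p$-power roots of unity (e.g.\ $\Z_p[p^{1/p^\infty}]^\wedge_p$ does not), and without them there is no element of $A$ playing the role of $q-1$ that would let you match $d^i$ against the cyclotomic character. The Breuil--Kisin twist $A\{i\}$ exists abstractly, but the passage from $\text{hofib}(\phi_i-1)$ on $\Prism_{S/A}[\tfrac{1}{d}]^\wedge_p$ to $R\Gamma_{\text{proét}}(\text{Spec}(S[\tfrac{1}{p}]),\Z_p(i))$ for $i>0$ is precisely the content that needs proof, not something one can read off from a perfectoid computation over $A_{\inf}$ ``extending verbatim''.

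The paper's proof addresses exactly this obstruction. It first treats the case where $A/d$ does admit a compatible system of $p$-power roots of unity: there one can compare with the perfected $q$-de Rham prism, and \cite[Theorem~8.5.1]{bhatt_absolute_2022} gives the twisted comparison for all $i\geq 0$; independence of the perfect base prism \cite[Theorem~5.6.2]{bhatt_absolute_2022} transports this to any such $(A,(d))$. For a general perfect prism, the paper chooses a $p$-quasisyntomic cover $A/d \to A'/d$ with $A'/d$ perfectoid and containing roots of unity (\cite[Theorem~7.14]{bhatt_prisms_2022}), and then descends: both sides are $p$-quasisyntomic sheaves (arc-descent for étale cohomology, quasisyntomic descent for prismatic cohomology), so it suffices to check the identification on $A'/d$-algebras, where the previous case applies. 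Your reduction to smooth $S$ is not the issue---the missing ingredient is this change-of-base to acquire roots of unity.
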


\begin{proof}
    For $i=0$ this is \cite[Theorem~$9.1$]{bhatt_prisms_2022}. When $(A,(d))$ is the perfected $q$-de Rham prism $(\Z[q^{1/p^{\infty}}]^{\wedge}_{(p,q-1)},(q-1))$, \cite[Theorem~$8.5.1$]{bhatt_absolute_2022} and its proof extend \cite[Theorem~$9.1$]{bhatt_prisms_2022} to the general case $i \geq 0$. By independence of the perfect base prism for prismatic cohomology \cite[Theorem~$5.6.2$]{bhatt_absolute_2022}, the same result also holds for any perfect base prism $(A,(d))$ such that the perfectoid ring $A/d$ admits a compatible system of $p$-powers roots of unity.

    Assume now that $(A,(d))$ is a general perfect prism. Consider a perfect prism $(A',(d))$ over $(A,(d))$ such that $A'/d$ admits a compatible system of $p$\nobreakdash-power roots of unity and such that $A/d \rightarrow A'/d$ is a $p$\nobreakdash-quasisyntomic cover (\cite[Theorem~$7.14$]{bhatt_prisms_2022}). We prove the result for torsion coefficients, by reduction to the case of $(A',(d))$ instead of $(A,(d))$; taking limits over $n \geq 1$ implies the result for integral coefficients. For any object $C \in \mathcal{D}(\Z/p^n)$ equipped with a map $F : C \rightarrow C$, denote by $C^{F=1}$ the homotopy fibre of the map $F-1 : C \rightarrow C$. 
    
    The constructions
    $$S \longmapsto R\Gamma_{\text{ét}}(\text{Spec}(S[\tfrac{1}{p}]),\mu_{p^n}^{\otimes i}), \text{ }(\Prism_{S/A}[\tfrac{1}{d}]/p^n)^{\phi_i=1}$$
    from $p$-complete $p$-quasisyntomic $A/d$-algebras $S$ to $\mathcal{D}(\Z/p^n)$, are $p$-quasisyntomic sheaves. For the first one, this is a consequence of arc-descent (\cite[Corollary~$6.17$]{bhatt_arc-topology_2021}). For the second one, this is a consequence of $p$-quasisyntomic descent for prismatic cohomology ({\it e.g.}, \cite[Construction~$1.16\,(2)$]{bhatt_prisms_2022}). In particular, to construct a natural map
    $$R\Gamma_{\text{ét}}(\text{Spec}(S[\tfrac{1}{p}]),\mu_{p^n}^{\otimes i}) \longrightarrow (\Prism_{S/A}[\tfrac{1}{d}]/p^n)^{\phi_i=1}$$
    for all $p$-complete $A/d$-algebra $S$, and proving that it is an equivalence in the derived category $\mathcal{D}(\Z/p^n)$, it suffices, by left Kan extension, to do so for $p$\nobreakdash-quasisyntomic $A/d$-algebras, and then locally in the $p$-quasisyntomic topology, {\it e.g.}, for all $p$-complete $p$-quasisyntomic $A'/d$-algebras. By \cite[Theorem~$5.6.2$]{bhatt_absolute_2022}, there are canonical equivalences
    $$\Prism_{-/A} \xlongleftarrow{\sim} \Prism_{-} \xlongrightarrow{\sim} \Prism_{-/A'}$$ 
    on $A'/d$-algebras, which concludes the proof.
\end{proof}

\subsection{The syntomic-étale comparison theorem in characteristic $p$}

\vspace{-\parindent}
\hspace{\parindent}

In this subsection we review the description of the syntomic complexes in characteristic $p$ in terms of logarithmic de Rham--Witt forms (following \cite{kelly_k-theory_2021,luders_milnor_2021}), and its consequence on the syntomic-étale comparison map (Construction~\ref{Constructionsyntomicétalecomparison}).

\begin{proposition}[Syntomic-étale comparison theorem in characteristic $p$]\label{propositionsyntomicetalecomparisonincharp}
    Let $R$ be perfect $\F_p$-algebra, and $S$ a Cartier smooth $R$-algebra. Then for any integers $i \geq 0$ and $n\geq 1$, there are canonical identifications
    $$\Z/p^n(i)^\emph{syn}(S) \simeq R\Gamma_\emph{ét}(\emph{Spec}(S),W_n\Omega^i_\emph{log})[-i]$$
    and 
    $$\Z_p(i)^\emph{syn}(S) \simeq R\Gamma_\emph{proét}(\emph{Spec}(S),W\Omega^i_\emph{log})[-i].$$
\end{proposition}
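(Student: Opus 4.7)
The plan is to reduce the computation of $\Z_p(i)^{\text{syn}}(S)$ to an étale-local version of the Illusie--Raynaud--Kato fundamental exact sequence for logarithmic de Rham--Witt forms, whose extension to Cartier smooth $\F_p$-algebras is the main technical input taken from \cite{kelly_k-theory_2021,luders_milnor_2021}. Since $R$ is a perfect $\F_p$-algebra, $(W(R),(p))$ is a perfect prism with $W(R)/p = R$, and Definition~\ref{Definitionsyntomiccomplexesoverperfectoid} gives
$$\Z/p^n(i)^{\text{syn}}(S) = \text{hofib}(\phi_i - 1 : \mathcal{N}^{\geq i}\Prism_{S/W(R)}/p^n \longrightarrow \Prism_{S/W(R)}/p^n),$$
the integral statement following by passing to the inverse limit over $n \geq 1$.

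First I would identify the triple $(\Prism_{S/W(R)}/p^n, \mathcal{N}^{\geq \star}, \phi_i)$ with the de Rham--Witt complex $W_n\Omega_S^{\bullet}$ equipped with its canonical Nygaard-type filtration and divided Frobenius $F$. Modulo $p$, this is essentially the content of Theorem~\ref{TheoremCartiersmoothMain}: the de Rham comparison $(dR)$ gives $\Prism_{S/W(R)}/p \simeq \Omega^{\bullet}_{S/R}$, the perfectness of $R$ trivialises the Frobenius twist, and the description of the Nygaard graded pieces in Proposition~\ref{propositionpquasismoothproperties}\,$(2)$ matches the mod-$p$ filtration on the de Rham complex under the Cartier isomorphism. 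I would then bootstrap from $n=1$ to general $n$ using the $p$-adic Bockstein filtration, tracking how $\phi_i$ interacts with the successive extensions; this should recover the derived $p$-complete de Rham--Witt picture with its $F$-operator, exactly as in the smooth case.

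The second step is to apply, étale locally on $\text{Spec}(S)$, the Illusie--Raynaud fundamental short exact sequence
$$0 \longrightarrow W_n\Omega^i_{S,\log} \longrightarrow \mathcal{N}^{\geq i} W_n\Omega_S^{\bullet} \xlongrightarrow{1-F} W_n\Omega_S^{\bullet} \longrightarrow 0,$$
whose validity for Cartier smooth $\F_p$-algebras is the content of \cite{kelly_k-theory_2021,luders_milnor_2021}, proved by reduction to the valuation ring case where Gabber's Cartier isomorphism (Theorem~\ref{TheoremGabberandGabberRamero}) applies. Taking sheaf cohomology on $\text{Spec}(S)_{\text{ét}}$ and using the identification from the first step then identifies $\text{hofib}(\phi_i - 1)$ with $R\Gamma_{\text{ét}}(\text{Spec}(S), W_n\Omega^i_{S,\log})[-i]$, yielding the required formula; passing to the inverse limit over $n$ gives the pro-étale statement.

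The main obstacle will be the first step, namely realising the abstract Nygaard filtration on $\Prism_{S/W(R)}$ concretely as the Fontaine--Jannsen filtration on de Rham--Witt for Cartier smooth, and not merely smooth, $S$. Theorem~\ref{TheoremCartiersmoothMain} characterises the Nygaard filtration only through its graded pieces and through abstract completeness; producing an honest filtered equivalence with $W_n\Omega_S^{\bullet}$ requires carefully following the Bockstein lifts from mod $p$ to mod $p^n$ and verifying that the Frobenius-eigenspace description of logarithmic forms is compatible with left Kan extension from the polynomial case. Once this bridge is in place, invoking \cite{kelly_k-theory_2021,luders_milnor_2021} for the fundamental exact sequence is essentially formal.
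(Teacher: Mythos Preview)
The paper's own proof is a one-line citation: the first identification is \cite[Proposition~5.1\,(ii)]{luders_milnor_2021}, and the second follows by taking the limit over $n$. What you have written is not an alternative argument but rather a sketch of what goes into that cited result, so in spirit you are taking the same route --- you are simply unpacking the black box.

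Two points about your unpacking. First, the remark that the fundamental exact sequence for Cartier smooth $\F_p$-algebras is ``proved by reduction to the valuation ring case'' is inaccurate: in \cite{kelly_k-theory_2021,luders_milnor_2021} the Cartier isomorphism is the \emph{hypothesis}, and the exact sequence is deduced from it directly (via the structure of the de Rham--Witt complex), not by any reduction to valuation rings. Valuation rings are an example of Cartier smooth algebras, not the inductive base. Second, the sequence you display mixes a single sheaf $W_n\Omega^i_{\log}$ with two honest complexes; what is actually shown is that the homotopy fibre of $\phi_i-1$ is concentrated in degree~$i$ and identified there with $W_n\Omega^i_{\log}$, so the shift $[-i]$ arises for that reason rather than from a short exact sequence of complexes in the form you wrote.

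Your diagnosis of the main obstacle is correct: the passage from the abstract Nygaard-filtered prismatic complex to the concrete de Rham--Witt complex with its canonical filtration, for Cartier smooth rather than smooth $S$, is exactly the substance of \cite[Section~5]{luders_milnor_2021}, and your bootstrap-via-Bockstein idea is essentially how it is carried out there. So your plan is sound, but once you have located the work in \cite{luders_milnor_2021} there is nothing left to do beyond citing it, which is what the paper does.
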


\begin{proof}
	The first equivalence follows from \cite[Proposition~$5.1\,(ii)$]{luders_milnor_2021}, the second by taking limits over~$n \geq 1$.
\end{proof}

\begin{corollary}\label{corollarysyntomicétalecomparisoncharp}
    Let $R$ be a perfect $\F_p$-algebra, and $S$ a Cartier smooth $R$-algebra. Then for any integers $i\geq 0$ and $n\geq 1$, the comparison maps
	$$\gamma_\emph{syn}^\emph{ét}\{i\}/p^n : \Z/p^n(i)^\emph{syn}(S) \longrightarrow R\Gamma_\emph{ét}(\emph{Spec}(S[\tfrac{1}{p}]),\mu_{p^n}^{\otimes i})$$
	and
	$$\gamma_\emph{syn}^\emph{ét}\{i\} : \Z_p(i)^\emph{syn}(S) \longrightarrow R\Gamma_\emph{proét}(\emph{Spec}(S[\tfrac{1}{p}]),\Z_p(i))$$
	have homotopy cofibres in degrees $\geq i-1$.
\end{corollary}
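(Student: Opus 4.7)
The key observation is that since $R$ is a perfect $\F_p$-algebra and $S$ is an $R$-algebra, the ring $S$ is an $\F_p$-algebra and in particular $p=0$ in $S$. Hence the localisation $S[\tfrac{1}{p}]$ is the zero ring, so $\mathrm{Spec}(S[\tfrac{1}{p}])$ is empty, and both targets of the syntomic-étale comparison maps vanish:
$$R\Gamma_\text{ét}(\mathrm{Spec}(S[\tfrac{1}{p}]),\mu_{p^n}^{\otimes i}) \simeq 0 \qquad \text{and} \qquad R\Gamma_\text{proét}(\mathrm{Spec}(S[\tfrac{1}{p}]),\Z_p(i)) \simeq 0.$$
Consequently, the homotopy cofibre of $\gamma_\text{syn}^\text{ét}\{i\}/p^n$ is canonically identified with $\Z/p^n(i)^\text{syn}(S)[1]$, and similarly for the integral version.

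It therefore suffices to show that $\Z/p^n(i)^\text{syn}(S)$ lives in cohomological degrees $\geq i$, and likewise for $\Z_p(i)^\text{syn}(S)$. This is where Proposition~\ref{propositionsyntomicetalecomparisonincharp} enters: it identifies
$$\Z/p^n(i)^\text{syn}(S) \simeq R\Gamma_\text{ét}(\mathrm{Spec}(S),W_n\Omega^i_\text{log})[-i],$$
and since étale cohomology of a sheaf is concentrated in degrees $\geq 0$, the shift by $-i$ places this object in degrees $\geq i$. Shifting back by $1$ gives that the cofibre of $\gamma_\text{syn}^\text{ét}\{i\}/p^n$ lives in degrees $\geq i-1$. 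The same argument, using the pro-étale identification $\Z_p(i)^\text{syn}(S) \simeq R\Gamma_\text{proét}(\mathrm{Spec}(S),W\Omega^i_\text{log})[-i]$, handles the integral case.

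There is no real obstacle here: once one notices that the generic fibre is empty in characteristic $p$, the corollary reduces formally to the connectivity statement contained in Proposition~\ref{propositionsyntomicetalecomparisonincharp}. The only point worth being careful about is the bookkeeping of the cofibre shift—namely that $\mathrm{cofib}(C \to 0) \simeq C[1]$ in cohomological conventions, which turns the bound ``degrees $\geq i$'' on the source into ``degrees $\geq i-1$'' on the cofibre, matching the stated conclusion.
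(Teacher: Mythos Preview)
Your proof is correct and follows exactly the same approach as the paper: observe that the generic fibre $S[\tfrac{1}{p}]$ vanishes in characteristic $p$, so the cofibre is the shift of the syntomic complex, and then invoke Proposition~\ref{propositionsyntomicetalecomparisonincharp} to conclude that the syntomic complexes live in degrees $\geq i$. The paper's proof is just a terser version of what you wrote.
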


\begin{proof}
    The syntomic complexes $\Z/p^n(i)^\text{syn}(S)$ and $\Z_p(i)^\text{syn}(S)$ are in degrees $\geq i$ by Proposition~\ref{propositionsyntomicetalecomparisonincharp}, and the generic fibre $S[\tfrac{1}{p}]$ of $S$ is zero.
\end{proof}

\subsection{The syntomic-étale comparison theorem}

\vspace{-\parindent}
\hspace{\parindent}

In this subsection we prove the syntomic-étale comparison theorem over a general perfectoid base ring (Theorem~\ref{Theoremsyntomicetalecomparison}), generalising Corollary~\ref{corollarysyntomicétalecomparisoncharp}.

The following result is a direct consequence of Theorem~\ref{TheoremCartiersmoothMain}\,$(\mathcal{N}^\geq)$ when the base prism is perfect.

\begin{lemma}\label{Lemmasyntomicétalecomparisonendofproof}
    Let $(A,(d))$ be a perfect prism, and $S$ a $p$\nobreakdash-Cartier smooth $A/d$-algebra. Then for every integer $i \geq 0$, the map
    $$\tau^{\leq i} \mathcal{N}^{\geq i} \Prism_{S/A} \xlongrightarrow{\phi_i} \tau^{\leq i} \Prism_{S/A}$$ is an equivalence.
\end{lemma}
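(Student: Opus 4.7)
The plan is to deduce this lemma as a formal consequence of Theorem~\ref{TheoremCartiersmoothMain}$(\mathcal{N}^{\geq})$, using perfectness of $(A,(d))$ to pass between the twisted and untwisted Frobenius pictures. That characterisation states that for each $i \geq 0$ the map $\tau^{\leq i}\phi : \tau^{\leq i}\mathcal{N}^{\geq i}\Prism^{(1)}_{S/A} \to \tau^{\leq i} I^i\Prism_{S/A}$ is an equivalence, and the goal is to upgrade this to the corresponding statement for the divided Frobenius $\phi_i$.

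First I would invoke perfectness of $(A,(d))$: the Frobenius $\phi_A : A \to A$ is an isomorphism, so the canonical map $\Prism_{S/A} \to \Prism^{(1)}_{S/A}$ is an equivalence that intertwines the Nygaard filtrations on both sides, as recalled at the start of this section. Combining this identification with Theorem~\ref{TheoremCartiersmoothMain}$(\mathcal{N}^{\geq})$ yields an equivalence $\tau^{\leq i}\phi : \tau^{\leq i}\mathcal{N}^{\geq i}\Prism_{S/A} \xrightarrow{\sim} \tau^{\leq i} I^i\Prism_{S/A}$ for every $i \geq 0$.

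It then remains to pass from $\phi$ to $\phi_i$. The commutative triangle at the start of this section reads $\phi = d^i \circ \phi_i$, where the right-hand vertical map is multiplication by $d^i$, viewed as a map $\Prism_{S/A} \to d^i\Prism_{S/A} = I^i\Prism_{S/A}$. Since $I = (d)$ is principal and $d$ is a nonzerodivisor on $A$, multiplication by $d^i$ is an equivalence in $\mathcal{D}(A)$ between $\Prism_{S/A}$ and $I^i\Prism_{S/A}$; applying $\tau^{\leq i}$ and invoking the two-out-of-three property on the defining triangle of $\phi_i$ produces the required equivalence $\tau^{\leq i}\phi_i : \tau^{\leq i}\mathcal{N}^{\geq i}\Prism_{S/A} \xrightarrow{\sim} \tau^{\leq i}\Prism_{S/A}$. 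The only piece of bookkeeping, and arguably the main point of delicacy, is to verify that the object $I^i\Prism_{S/A}$ appearing in Theorem~\ref{TheoremCartiersmoothMain}$(\mathcal{N}^{\geq})$ is canonically identified with the target $d^i\Prism_{S/A}$ of the defining triangle of $\phi_i$; this is a direct unwinding of the $I$-adic filtration rather than a substantive obstacle.
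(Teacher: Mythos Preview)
Your proof is correct and follows the same approach as the paper: both use the commutative triangle $\phi = d^i \circ \phi_i$, invoke Theorem~\ref{TheoremCartiersmoothMain}$(\mathcal{N}^{\geq})$ (untwisted via perfectness of $(A,(d))$) for the equivalence $\tau^{\leq i}\phi$, and argue that multiplication by $d^i$ is an equivalence $\Prism_{S/A} \to d^i\Prism_{S/A}$. The only cosmetic difference is in that last step---the paper justifies it by checking $d$-torsionfreeness locally on the $p$-quasisyntomic site and using descent and left Kan extension, whereas you argue directly that $I^i$ is an invertible $A$-module since $d$ is a nonzerodivisor; these amount to the same thing once one unwinds the definition of the $I$-adic filtration, as you note.
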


\begin{proof}
    The diagram
    $$\xymatrix{
		\relax
		\mathcal{N}^{\geq i} \Prism_{S/A} \ar[rd]^{\phi} \ar[r]^{\phi_i} & \Prism_{S/A} \ar[d]^{d^i} \\ 
		& d^i \Prism_{S/A} \\
	}$$
    is commutative for any $A/d$-algebra $S$ and the map $d^i : \Prism_{S/A} \rightarrow d^i \Prism_{S/A}$ is an equivalence. Locally on the $p$\nobreakdash-quasisyntomic site $\Prism_{S/A}$ is a $d$\nobreakdash-torsionfree $A$-module in degree $0$ and this is by definition of the divided Frobenius map $\phi_i$, and in general this is true by descent on the $p$\nobreakdash-quasisyntomic site and left Kan extension. When $S$ is $p$\nobreakdash-Cartier smooth over $A/d$, the Frobenius map $\phi : \mathcal{N}^{\geq i} \Prism_{S/A} \rightarrow d^i \Prism_{S/A}$ is an isomorphism in degrees $\leq i$, hence the result.
\end{proof}

\begin{lemma}\label{lemmakeyfrobeniuszeroondtorsion}
    Let $(A,(d))$ be a perfect prism, and $S$ a $p$-Cartier smooth $A/d$-algebra. For all integers~$i \geq 0$ and $k \leq i-1$, define the map $$\phi_i^{-1} : \emph{H}^k(\Prism_{S/A}/p) \longrightarrow \emph{H}^k(\Prism_{S/A}/p)$$
    as the composite
    $$\emph{H}^k(\Prism_{S/A}/p) \xlongrightarrow{\phi_i^{-1}} \emph{H}^k(\mathcal{N}^{\geq i} \Prism_{S/A}/p) \xlongrightarrow{\emph{can}} \emph{H}^k(\Prism_{S/A}/p)$$
    where the first map is given by the derived reduction modulo $p$ of Lemma~\ref{Lemmasyntomicétalecomparisonendofproof}. Then for every $k \leq i-2$, the map $\phi_i^{-1}$ is zero on the $d$-torsion subgroup $\emph{H}^k(\Prism_{S/A}/p)[d]$.\footnote{Equivalently, by Lemma~\ref{Lemmasyntomicétalecomparisonendofproof}, the canonical map
    $$\text{can} : \text{H}^k(\mathcal{N}^{\geq i} \Prism_{S/A}/p) \longrightarrow \text{H}^k(\Prism_{S/A}/p)$$
    is zero on the $d^{1/p}$-torsion subgroup $\text{H}^k(\mathcal{N}^{\geq i}\Prism_{S/A}/p)[d^{1/p}]$.} For $k=i-1$, the map $\phi_i^{-1} \circ \phi_i^{-1}$ is zero on the $d$-torsion subgroup $\emph{H}^{i-1}(\Prism_{S/A}/p)[d]$.
\end{lemma}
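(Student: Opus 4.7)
The plan is to exploit the identity $\phi_{j-1}|_{\mathcal{N}^{\geq j}\Prism_{S/A}} = d\,\phi_j$ (arising from $\phi_j = \phi/d^j$) together with the $\tau^{\leq j}\phi_j$-equivalences of Lemma~\ref{Lemmasyntomicétalecomparisonendofproof} at $j = i-1$ and $j = i$. Modulo $p$, the inverse $\sigma := \phi_i^{-1}$ on $\emph{H}^k(-/p)$ for $k \leq i-1$ is $\phi_A^{-1}$-semilinear, and since $A/p$ is perfect one has $\phi_A^{-1}(d) = d^{1/p}$; so $\sigma$ carries the $d$-torsion of $\emph{H}^k(\Prism_{S/A}/p)$ into the $d^{1/p}$-torsion of $\emph{H}^k(\mathcal{N}^{\geq i}\Prism_{S/A}/p)$ (which justifies the equivalent formulation in the footnote), and because $(d^{1/p})^p = d$ in $A/p$ the composite $\phi_i^{-1} = \emph{can}\circ\sigma$ preserves the $d$-torsion subgroup.

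For the range $k \leq i-2$, I apply Lemma~\ref{Lemmasyntomicétalecomparisonendofproof} with $j = i-1$: the cofiber of $\phi_{i-1}: \mathcal{N}^{\geq i-1}\Prism_{S/A} \to \Prism_{S/A}$ then sits in degrees $\geq i-1$, hence in degrees $\geq i-2$ after reducing modulo $p$, and the associated long exact sequence shows that $\phi_{i-1}$ is injective on $\emph{H}^k(-/p)$ for $k \leq i-2$. Writing $y := \sigma(x)$ and $\iota : \mathcal{N}^{\geq i}\Prism_{S/A} \hookrightarrow \mathcal{N}^{\geq i-1}\Prism_{S/A}$, the relation $\phi_{i-1}\circ\iota = d\,\phi_i$ forces $\phi_{i-1}(\iota(y)) = dx = 0$, hence $\iota(y) = 0$. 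The long exact sequence of the Nygaard graded-piece triangle
$$\mathcal{N}^{\geq i}\Prism_{S/A} \xrightarrow{\iota} \mathcal{N}^{\geq i-1}\Prism_{S/A} \to \mathcal{N}^{i-1}\Prism_{S/A} \xrightarrow{\delta} \mathcal{N}^{\geq i}\Prism_{S/A}[1]$$
then expresses $y$ as $\delta(z)$ for some $z \in \emph{H}^{k-1}(\mathcal{N}^{i-1}\Prism_{S/A}/p)$. Since $\emph{can}: \mathcal{N}^{\geq i}\Prism_{S/A} \to \Prism_{S/A}$ factors through $\mathcal{N}^{\geq i-1}\Prism_{S/A}$ and $\iota[1]\circ\delta = 0$ in the rotated cofiber triangle, the composition $\emph{can}\circ\delta$ vanishes on cohomology, giving $\emph{can}(y) = 0$.

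The main obstacle is the case $k = i-1$: the mod-$p$ cofiber of $\phi_{i-1}$ can pick up a nontrivial $\emph{H}^{i-2}$-contribution from $\emph{H}^{i-1}(\emph{cofib}(\phi_{i-1}))[p]$, so $\phi_{i-1}$ is no longer injective on $\emph{H}^{i-1}(-/p)$ and the one-step argument breaks down. Here one iterates: the element $x_1 := \phi_i^{-1}(x)$ remains in $\emph{H}^{i-1}(\Prism_{S/A}/p)[d]$ by the preservation property above, and additionally lies in the image of $\emph{can}: \emph{H}^{i-1}(\mathcal{N}^{\geq i}\Prism_{S/A}/p) \to \emph{H}^{i-1}(\Prism_{S/A}/p)$. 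The hard part is to exploit this extra image-of-$\emph{can}$ constraint in the second application to promote $\sigma(x_1)$ to a lift through $\mathcal{N}^{\geq i+1}\Prism_{S/A}$, at which point the same cofiber-triangle argument closes and yields $(\phi_i^{-1})^2(x) = 0$.
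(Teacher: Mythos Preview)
Your argument for $k \leq i-2$ is correct, though more circuitous than needed: once you have $\iota(y)=0$ from the injectivity of $\phi_{i-1}$ on $\text{H}^k(-/p)$, the conclusion $\text{can}(y)=\text{can}_{i-1}(\iota(y))=0$ is immediate, and the detour through $y=\delta(z)$ is superfluous. The paper's version is shorter still: it records the identity $\phi_i^{-1}=d^{1/p}\phi_{i-1}^{-1}$ directly (from $\phi_{i-1}\circ\iota=d\phi_i$ and the $\phi_A^{-1}$-semilinearity you already noted), and then $\phi_i^{-1}(x)=\phi_{i-1}^{-1}(dx)=0$ is a one-liner.

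The $k=i-1$ case, however, has a genuine gap. You correctly diagnose that $\phi_{i-1}$ fails to be injective on $\text{H}^{i-1}(-/p)$, but your proposed fix---promoting $\sigma(x_1)$ to a lift through $\mathcal{N}^{\geq i+1}$---is not carried out, and the ``extra image-of-can constraint'' you invoke is tautological: every element of the form $\phi_i^{-1}(-)$ is by construction $\text{can}(\sigma(-))$, so this gives no new leverage. The paper takes a completely different route. It splits $\text{H}^{i-1}(\Prism_{S/A}/p)$ via the universal coefficient sequence
\[
0 \longrightarrow \text{H}^{i-1}(\Prism_{S/A})/p \longrightarrow \text{H}^{i-1}(\Prism_{S/A}/p) \longrightarrow \text{H}^i(\Prism_{S/A})[p] \longrightarrow 0
\]
and shows that $\phi_i^{-1}$ already vanishes on the $d$-torsion of each outer term. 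On the left term this is the same semilinearity identity as before, now legitimate because the \emph{integral} $\phi_{i-1}$ is an isomorphism on $\text{H}^{i-1}$ by Lemma~\ref{Lemmasyntomicétalecomparisonendofproof}. On the right term one uses the $L\eta$-characterisation of Theorem~\ref{TheoremCartiersmoothMain} to identify the kernel of $\text{can}:\text{H}^i(\mathcal{N}^{\geq i}\Prism_{S/A})\to\text{H}^i(\Prism_{S/A})$ with the $d^{1/p}$-torsion. The two-step conclusion then follows by bookkeeping on the snake-lemma sequence of $d$-torsion: $\phi_i^{-1}(x)$ dies in the right term, hence lands in the left, where a second $\phi_i^{-1}$ kills it.
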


\begin{proof}
    First assume that $k \leq i-2$. In this case, the map $$\phi_i^{-1} : \text{H}^k(\Prism_{S/A}/p) \longrightarrow \text{H}^k(\Prism_{S/A}/p)$$
    can be rewritten as the map $d^{1/p} \phi_{i-1}^{-1}$, where $\phi_{i-1}^{-1}$ is defined as $\phi_i^{-1}$. Let $x$ be a $d$-torsion element of the $A/p$-module $\text{H}^k(\Prism_{S/A}/p)$. Then $$\phi_i^{-1}(x) = d^{1/p} \phi_{i-1}^{-1}(x) = \phi_{i-1}^{-1}(dx) = 0,$$
    hence the result.
    
    Now assume that $k=i-1$, and let us prove that $(\phi_i^{-1})^2 := \phi_i^{-1} \circ \phi_i^{-1}$ is the zero map on the $A/p$-module $\text{H}^{i-1}(\Prism_{S/A}/p)[d]$. There is a natural short exact sequence of $A/p$-modules
    $$0 \longrightarrow \text{H}^{i-1}(\Prism_{S/A})/p \longrightarrow \text{H}^{i-1}(\Prism_{S/A}/p) \longrightarrow \text{H}^i(\Prism_{S/A})[p] \longrightarrow 0,$$
    and compatible maps $\phi_i^{-1}$ on each of its terms. This short exact sequence induces, by the snake lemma, an exact sequence of $A/p$-modules 
    $$0 \longrightarrow (\text{H}^{i-1}(\Prism_{S/A})/p)[d] \longrightarrow \text{H}^{i-1}(\Prism_{S/A}/p)[d] \longrightarrow (\text{H}^i(\Prism_{S/A})[p])[d],$$
    and the maps $\phi_i^{-1}$ restrict to these $A/p$-modules. It suffices to prove that the map $\phi_i^{-1}$ is zero on both $(\text{H}^{i-1}(\Prism_{S/A})/p)[d]$ and $(\text{H}^i(\Prism_{S/A})[p])[d]$. Indeed, if $x$ is an element of $\text{H}^{i-1}(\Prism_{S/A}/p)[d]$, then $\phi_i^{-1}(x)$ is naturally in the kernel of the canonical map
    $$\text{H}^{i-1}(\Prism_{S/A}/p)[d] \longrightarrow (\text{H}^i(\Prism_{S/A})[p])[d].$$
    So $\phi_i^{-1}(x)$ is in the subgroup $(\text{H}^{i-1}(\Prism_{S/A})/p)[d] \subseteq \text{H}^{i-1}(\Prism_{S/A}/p)[d]$, and thus $\phi_i^{-1}(\phi_i^{-1}(x))=0$.
    
    On the $A/p$-module $\text{H}^{i-1}(\Prism_{S/A})/p$, the map $\phi_i^{-1}$ can be rewritten as the map $d^{1/p} \phi_{i-1}^{-1}$, and is thus zero on $(\text{H}^{i-1}(\Prism_{S/A})/p)[d]$, arguing as in the first paragraph of this proof. We now prove that the map $\phi_i^{-1}$ is zero on the $A/p$-module $(\text{H}^i(\Prism_{S/A})[p])[d]$. By definition of the map $\phi_i^{-1}$, it is equivalent to prove that the canonical map
    $$\text{can} : \text{H}^i(\mathcal{N}^{\geq i} \Prism_{S/A})[p] \longrightarrow \text{H}^i(\Prism_{S/A})[p]$$
    is zero on the $d^{1/p}$-torsion subgroup $(\text{H}^i(\mathcal{N}^{\geq i} \Prism_{S/A})[p])[d^{1/p}]$. By Theorem~\ref{TheoremCartiersmoothMain}, there is a commutative diagram
    $$\xymatrix{ 
		\relax
		\text{H}^i(\mathcal{N}^{\geq i} \Prism_{S/A})[p] \ar[r]^{\phi}_{\cong} \ar[d]^{\text{can}} & \text{H}^i(d^i\Prism_{S/A})[p] \ar[d]^{\text{can}}
		\\
		\text{H}^i(\Prism_{S/A})[p] \ar[r]^{\phi}_{\cong} & \text{H}^i(L\eta_d\Prism_{S/A})[p],
	}$$
    where the right vertical map is defined in \cite[Lemma~$6.9$]{bhatt_integral_2018}. By \cite[Lemma~$6.4$]{bhatt_integral_2018} and its proof, the canonical map 
    $$\text{can} : \text{H}^i(d^i\Prism_{S/A}) \longrightarrow \text{H}^i(L\eta_d \Prism_{S/A})$$
    is surjective, with kernel given by the $d$-torsion subgroup~$\text{H}^i(d^i\Prism_{S/A})[d]$. In particular, the right vertical map in the previous diagram is surjective, with kernel given by the $d$-torsion subgroup $(\text{H}^i(d^i\Prism_{S/A})[p])[d]$. The left vertical map of this diagram is thus also surjective, with kernel given by the $d^{1/p}$-torsion subgroup $(\text{H}^i(\mathcal{N}^{\geq i} \Prism_{S/A})[p])[d^{1/p}]$. Hence the result.
\end{proof}

\begin{remark}
    In the previous lemma, the result can be slightly improved if the perfectoid base ring $A/d$ is $p$-torsionfree. In this case, the map $\phi_i^{-1}$ is zero on $\text{H}^k(\Prism_{S/A}/p)[d]$, for every integer $k \leq i-1$; see the proof of Theorem~\ref{Theoremsyntomicetalecomparison2} below.
\end{remark}

\begin{theorem}[Syntomic-étale comparison theorem]\label{Theoremsyntomicetalecomparison}
    Let $R$ be a perfectoid ring, and $S$ a $p$\nobreakdash-Cartier smooth $R$-algebra. Then for any integers $i\geq 0$ and $n\geq 1$, the homotopy cofibres of the maps 
    $$\gamma_\emph{syn}^\emph{ét}\{i\}/p^n : \Z/p^n(i)^\emph{syn}(S) \longrightarrow R\Gamma_{\emph{ét}}(\emph{Spec}(S[\tfrac{1}{p}]),\mu_{p^n}^{\otimes i})$$
    and
    $$\gamma_\emph{syn}^\emph{ét}\{i\} : \Z_p(i)^\emph{syn}(S) \longrightarrow R\Gamma_{\emph{proét}}(\emph{Spec}(S[\tfrac{1}{p}]),\Z_p(i))$$ 
    are in degrees $\geq i-1$.
\end{theorem}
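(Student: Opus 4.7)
The plan is to reduce, via the definition of syntomic cohomology and Theorem~\ref{Theoremprismaticétalecomparison}, to showing that a specific endomorphism $\psi$ on prismatic cohomology is locally nilpotent in low degrees, and then invoke Lemmas~\ref{Lemmasyntomicétalecomparisonendofproof} and~\ref{lemmakeyfrobeniuszeroondtorsion}. First, the $\Z_p$-coefficient case follows by taking derived inverse limits over $n$ from the $\Z/p^n$-coefficient case, so fix $n \geq 1$ and a perfect prism $(A,(d))$ with $A/d = R$. Combining Definition~\ref{Definitionsyntomiccomplexesoverperfectoid} and Theorem~\ref{Theoremprismaticétalecomparison}, the comparison map $\gamma_\text{syn}^\text{ét}\{i\}/p^n$ is identified with the map of horizontal homotopy fibres in the commutative square
\[
\begin{tikzcd}
\mathcal{N}^{\geq i}\Prism_{S/A}/p^n \ar[r, "\phi_i - 1"] \ar[d, "\text{can}"'] & \Prism_{S/A}/p^n \ar[d, "\text{can}"] \\
\Prism_{S/A}[\tfrac{1}{d}]/p^n \ar[r, "\phi_i - 1"] & \Prism_{S/A}[\tfrac{1}{d}]/p^n,
\end{tikzcd}
\]
where we have used Remark~\ref{RemarkNygaarddinverted} to identify the bottom-left term with the bottom-right. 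The standard fact in a stable $\infty$-category that the total fibre of a commutative square can be computed equivalently via horizontal or vertical fibres then yields $\text{cofib}(\gamma_\text{syn}^\text{ét}\{i\}/p^n) \simeq \text{cofib}(\phi_i - 1: D_1 \to D_2)$, where $D_j := \text{hofib}(X_j \to X_j[\tfrac{1}{d}])$ with $X_1 = \mathcal{N}^{\geq i}\Prism_{S/A}/p^n$ and $X_2 = \Prism_{S/A}/p^n$.

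The associated long exact sequence reduces the claim to showing that $\phi_i - 1$ induces isomorphisms on $H^k(D_j)$ for $k \leq i-2$ and an injection on $H^{i-1}(D_j)$. Each $H^k(D_j)$ is an extension of the $d^\infty$-torsion piece $H^k(X_j)[d^\infty]$ by the $d$-cotorsion piece $H^{k-1}(X_j)[\tfrac{1}{d}]/H^{k-1}(X_j)$. By Lemma~\ref{Lemmasyntomicétalecomparisonendofproof} (applied mod $p^n$), the Frobenius $\phi_i$ is an isomorphism on $H^k$ for $k \leq i-1$; identifying $H^k(X_1) \cong H^k(X_2)$ via $\phi_i$, the induced map $\phi_i - 1$ on $H^k(D_j)$ becomes $1 - \psi$, where $\psi := \text{can} \circ \phi_i^{-1}$ is precisely the endomorphism of $H^k(\Prism_{S/A}/p)$ considered in Lemma~\ref{lemmakeyfrobeniuszeroondtorsion}. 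On the $d^\infty$-torsion layer, that lemma (mod $p$) states that $\psi$ vanishes on $d$-torsion in degrees $\leq i-2$; $A$-linearity of $\psi$ then implies that $\psi$ carries $d^m$-torsion into $d^{m-1}$-torsion, so $\psi^m$ kills $d^m$-torsion and $\psi$ is locally nilpotent on $d^\infty$-torsion. A Bockstein induction on $n$ using the short exact sequence $0 \to \Prism_{S/A}/p \to \Prism_{S/A}/p^{n+1} \to \Prism_{S/A}/p^n \to 0$ upgrades this to mod $p^n$, so $1 - \psi$ is invertible on this layer; a parallel analysis, exploiting the $d$-adic completeness of $\Prism_{S/A}/p^n$ and the filtration-shifting behaviour of $\phi_i = \phi/d^i$, handles the $d$-cotorsion layer.

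The injectivity of $1 - \psi$ on $H^{i-1}(D_j)$ uses the second half of Lemma~\ref{lemmakeyfrobeniuszeroondtorsion}, which asserts $\psi^2 = 0$ on $d$-torsion in degree $i-1$ (mod $p$); iterating the inductive argument shows that $\psi$ remains locally nilpotent on $H^{i-1}(D_j)$, whence $1 - \psi$ is injective. The hardest part will be controlling $\psi$ on the $d$-cotorsion layer, since this requires reconciling the $d$-adic completeness of $\Prism_{S/A}$ with the Frobenius action on its $d$-inverted localisation, simultaneously mod $p^n$; this is where the full structural input of $p$-Cartier smoothness, via Theorem~\ref{TheoremCartiersmoothMain}, becomes indispensable.
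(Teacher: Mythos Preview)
Your overall strategy matches the paper's: identify the cofibre of $\gamma_{\text{syn}}^{\text{\'et}}$ with the fibre of $\phi_i-1$ on the cofibre of the $d$-localisation map, use Lemma~\ref{Lemmasyntomicétalecomparisonendofproof} to convert $\phi_i-1$ into $1-\psi$ with $\psi=\text{can}\circ\phi_i^{-1}$, split each cohomology group into a $d^\infty$-torsion piece and a $d$-cotorsion piece, and invoke Lemma~\ref{lemmakeyfrobeniuszeroondtorsion} for the former. The paper argues mod $p$ and then applies derived Nakayama to reach $\Z_p$- and $\Z/p^n$-coefficients, rather than working mod $p^n$ and inducting via Bockstein as you suggest; either route is viable, but the paper's avoids having to lift Lemma~\ref{lemmakeyfrobeniuszeroondtorsion} through the Bockstein layers.

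Two points need correction. First, the claim that ``$A$-linearity of $\psi$'' sends $d^m$-torsion to $d^{m-1}$-torsion is wrong: $\psi$ is $\phi_A^{-1}$-\emph{semilinear}, not $A$-linear, and it is precisely this semilinearity (via $\phi_A^{-1}(d^{p^j})=d^{p^{j-1}}$ in $A/p$) that moves $d^{p^j}$-torsion into $d^{p^{j-1}}$-torsion. An $A$-linear map would preserve each $d^m$-torsion submodule and give no contraction. Second, your treatment of the $d$-cotorsion layer is not a ``parallel analysis'': the quotient $H^{k-1}(X)[\tfrac{1}{d}]/H^{k-1}(X)$ is the antithesis of $d$-adically complete, so appealing to $d$-adic completeness is misguided. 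The paper instead writes this cokernel as the filtered colimit over $m$ of $H^{k-1}(\Prism_{S/A}/p)/d^m$, observes that on the $m$-th stage $\psi$ becomes $d^{(p-1)m/p}\phi_i^{-1}$ (again by semilinearity), rewrites this as $d^{(p-1)m/p+1/p}\phi_{i-1}^{-1}$ using $k\leq i-2$, and computes that its $k$-th iterate picks up a factor of $d$ to the power $\tfrac{(p^k-1)m}{p^k}+\tfrac{p^k-1}{p^k(p-1)}$, which exceeds $m$ once $p^k\geq m(p-1)+1$. This explicit nilpotency calculation is the missing content; the filtration-shifting of $\phi_i$ is an ingredient, but it is the careful exponent bookkeeping with semilinearity, not $d$-completeness, that closes the argument.
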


\begin{proof}
    We first prove the result modulo $p$. Let $i\geq 0$ be an integer, $(A,(d))$ the perfect prism corresponding to the perfectoid ring $R$ and $R\Phi(S,\F_p(i)) \in \mathcal{D}(\F_p)$ the homotopy cofibre of the comparison map $$\gamma_\text{syn}^\text{ét}\{i\}/p : \F_p(i)^\text{syn}(S) \longrightarrow R\Gamma_{\text{ét}}(\text{Spec}(S[\tfrac{1}{p}]),\mu_p^{\otimes i}).$$ 
    
    Following \cite[Section~$8.4$]{bhatt_absolute_2022}, the syntomic complex $\F_p(i)^{\text{syn}}(S)$, where $S$ is a not necessarily $p$-complete ring, is defined by the cartesian square 
    $$\xymatrix{ 
		\relax
		\F_p(i)^{\text{syn}}(S) \ar[r] \ar[d] & R\Gamma_{\text{ét}}(\text{Spec}(S[\tfrac{1}{p}]),\mu_{p}^{\otimes i}) \ar[d]
		\\
		\F_p(i)^{\text{syn}}(S^\wedge_p) \ar[r] & R\Gamma_{\text{ét}}(\text{Spec}(S^\wedge_p[\tfrac{1}{p}]),\mu_{p}^{\otimes i}),
	}$$
    where the bottom horizontal map is the syntomic-étale comparison map (Construction~\ref{Constructionsyntomicétalecomparison}). In particular, the homotopy cofibre of the top horizontal map is naturally identified with that of the bottom horizontal map. The desired statement depending only on this homotopy cofibre, we assume for the rest of the proof that the ring $S$ is $p$-complete.
    
    By Theorem~\ref{Theoremprismaticétalecomparison} and Remark~\ref{RemarkNygaarddinverted}, there is a commutative diagram
    $$\xymatrix{ 
		\relax
		\F_p(i)^\text{syn}(S) \ar[r] \ar[d] & \mathcal{N}^{\geq i} \Prism_{S/A}/p \ar[r]^{\phi_i -1} \ar[d]^{\lambda_i} & \Prism_{S/A}/p \ar[d]^{\lambda}
		\\
		R\Gamma_{\text{ét}}(\text{Spec}(S[\tfrac{1}{p}]),\mu_p^{\otimes i}) \ar[r] \ar[d] & \Prism_{S/A}[\tfrac{1}{d}]/p \ar[r]^{\phi_i -1} \ar[d] & \Prism_{S/A}[\tfrac{1}{d}]/p \ar[d]
		\\
		R\Phi(S,\F_p(i)) \ar[r] & \text{hocofib}(\lambda_i) \ar[r]^{\phi_i -1} & \text{hocofib}(\lambda)
	}$$
    where all the horizontal maps are homotopy fibre sequences. The maps $\lambda_i$ and $\lambda$ correspond to inverting~$d$. 
    
    We want to prove that $R\Phi(S,\F_p(i)) \in \mathcal{D}^{\geq i-1}(\F_p)$, {\it i.e.}, that $R\Phi(S,\F_p(i))$ is zero in degrees $\leq i-2$. This statement depends only on
    $$\tau^{\leq i-2} \text{hocofib}(\lambda_i) \xlongrightarrow{\phi_i-1} \tau^{\leq i-2} \text{hocofib}(\lambda),$$
    and thus only on the commutative diagram
    $$\xymatrix{ 
		\relax
		\tau^{\leq i-1} (\mathcal{N}^{\geq i} \Prism_{S/A}/p) \ar[r]^{\phi_i -1} \ar[d]^{\tau^{\leq i-1}\lambda_i} & \tau^{\leq i-1}(\Prism_{S/A}/p) \ar[d]^{\tau^{\leq i-1}\lambda}
		\\
		\tau^{\leq i-1} (\Prism_{S/A}[\tfrac{1}{d}]/p) \ar[r]^{\phi_i -1} & \tau^{\leq i-1} (\Prism_{S/A}[\tfrac{1}{d}]/p).
	}$$
	In terms of this diagram, we want to prove that the map $$\text{hocofib}(\tau^{\leq i-1} \lambda_i) \xlongrightarrow{\phi_i-1} \text{hocofib}(\tau^{\leq i-1} \lambda)$$ is an isomorphism in degrees $\leq i-3$ and is injective in degree $i-2$.
    
    By Lemma~\ref{Lemmasyntomicétalecomparisonendofproof}, the divided Frobenius map $\phi_i : \mathcal{N}^{\geq i} \Prism_{S/A}/p \rightarrow \Prism_{S/A}/p$ is an isomorphism in degrees~$\leq i-1$. Define $$1-\phi_i^{-1} : \tau^{\leq i-1} (\Prism_{S/A}/p) \longrightarrow \tau^{\leq i-1} (\Prism_{S/A}/p)$$ as the map $\phi_i - 1 : \tau^{\leq i-1} (\mathcal{N}^{\geq i} \Prism_{S/A}/p) \rightarrow \tau^{\leq i-1} (\Prism_{S/A}/p)$ precomposed with the inverse of the divided Frobenius map $\phi_i : \tau^{\leq i-1} (\mathcal{N}^{\geq i} \Prism_{S/A}/p) \xrightarrow{\sim} \tau^{\leq i-1} (\Prism_{S/A}/p)$. Similarly the divided Frobenius map $\phi_i : \Prism_{S/A}[\tfrac{1}{d}]/p \rightarrow \Prism_{S/A}[\tfrac{1}{d}]/p$ is an equivalence by Theorem~\ref{TheoremCartiersmoothMain}\,$(L\eta)$, and we define $$1 - \phi_i^{-1} : \tau^{\leq i-1}(\Prism_{S/A}[\tfrac{1}{d}]/p) \longrightarrow \tau^{\leq i-1}(\Prism_{S/A}[\tfrac{1}{d}]/p)$$ as the map $\phi_i-1 : \tau^{\leq i-1}(\Prism_{S/A}[\tfrac{1}{d}]/p) \rightarrow \tau^{\leq i-1}(\Prism_{S/A}[\tfrac{1}{d}]/p)$ precomposed with the inverse of the divided Frobenius map $\phi_i : \tau^{\leq i-1} (\Prism_{S/A}[\tfrac{1}{d}]/p) \xrightarrow{\sim} \tau^{\leq i-1} (\Prism_{S/A}[\tfrac{1}{d}]/p)$. We thus want to prove that the induced map $$\text{hocofib}(\tau^{\leq i-1} \lambda) \xlongrightarrow{1-\phi_i^{-1}} \text{hocofib}(\tau^{\leq i-1} \lambda)$$ is an isomorphism in degrees $\leq i-3$ and is injective in degree $i-2$. We prove that it is an isomorphism in degrees $\leq i-2$.
    
    Let $k$ be an integer $\leq i-2$. There is a short exact sequence of $A/p$-modules:
    $$0 \longrightarrow \text{Coker}(\text{H}^k(\lambda)) \longrightarrow \text{H}^k(\text{hocofib}(\lambda)) \longrightarrow \text{Ker}(\text{H}^{k+1}(\lambda)) \longrightarrow 0.$$
    To prove that $1-\phi_i^{-1}$ is an isomorphism on $\text{H}^k(\text{hocofib}(\lambda))$, let us prove that it is an isomorphism on both $\text{Ker}(\text{H}^{k+1}(\lambda))$ and $\text{Coker}(\text{H}^k(\lambda))$.
    
    First consider the map
    $$\text{H}^{k+1}(\lambda) : \text{H}^{k+1}(\Prism_{S/A}/p) \longrightarrow \text{H}^{k+1}(\Prism_{S/A}/p)[\tfrac{1}{d}].$$
    Its kernel is given by the $d$-power torsion subgroup of $\text{H}^{k+1}(\Prism_{S/A}/p)$, so we want to prove that the map $1-\phi_i^{-1}$ is an isomorphism on the $A/p$-module $\text{H}^{k+1}(\Prism_{S/A}/p)[d^\infty]$. The relation $\phi_A(d)=d^p$ holds in the ring $A/p$. So for any integer $j \geq 1$ and any element $x \in \text{H}^{k+1}(\Prism_{S/A}/p)[d^{p^j}]$:
    $$d^{p^{j-1}} \phi_i^{-1}(x) = \phi_i^{-1}(d^{p^j}x) = 0,$$
    and thus $\phi_i^{-1}(x) \in \text{H}^{k+1}(\Prism_{S/A}/p)[d^{p^{j-1}}]$. By Lemma~\ref{lemmakeyfrobeniuszeroondtorsion}, the map $(\phi_i^{-1})^2$ is zero on the $A/p$-module $\text{H}^{k+1}(\Prism_{S/A}/p)[d]$. So the map $1-\phi_i^{-1}$ is an isomorphism on the $A/p$-module $\text{H}^{k+1}(\Prism_{S/A}/p)[d^{p^j}]$, for each integer $j \geq 1$, with inverse given by the map $$1 + \phi_i^{-1} + \dots + (\phi_i^{-1})^{j+1}.$$
    So the map $1-\phi_i^{-1}$ is an isomorphism on the $A/p$-module $\text{H}^{k+1}(\Prism_{S/A}/p)[d^\infty]$.
    
    Now consider the map
    $$\text{H}^k(\lambda) : \text{H}^k(\Prism_{S/A}/p) \longrightarrow \text{H}^k(\Prism_{S/A}/p)[\tfrac{1}{d}].$$
    It is the filtered colimit over $m \geq 0$ of the maps $$\lambda^{(m)} : \Prism_{S/A}/p \longrightarrow \Prism_{S/A}/p$$ given by multiplication by $d^m$. The $A/p$-module $\text{Coker}(\text{H}^k(\lambda))$ and the map $1-\phi_i^{-1}$ acting on it can be rewritten as the colimit over $m \geq 0$ of the $A/p$-modules $\text{H}^k(\Prism_{S/A}/p)/d^m$ with maps $1 - d^{\tfrac{(p-1)m}{p}}\phi_i^{-1}$. Let $m \geq 0$ be an integer. We claim that the map
    $$d^{\tfrac{(p-1)m}{p}}\phi_i^{-1} : \text{H}^k(\Prism_{S/A}/p)/d^m \longrightarrow \text{H}^k(\Prism_{S/A}/p)/d^m$$ is nilpotent. Because $k \leq i-2$, this map $d^{\tfrac{(p-1)m}{p}}\phi_i^{-1}$ is naturally identified with the map $$d^{\tfrac{(p-1)m}{p}+\tfrac{1}{p}}\phi_{i-1}^{-1} : \text{H}^k(\Prism_{S/A}/p)/d^m \longrightarrow \text{H}^k(\Prism_{S/A}/p)/d^m,$$ where $$\phi_{i-1} : \tau^{\leq i-1} \mathcal{N}^{\geq i-1} \Prism_{S/A} \longrightarrow \tau^{\leq i-1} \Prism_{S/A}$$ is the equivalence of Lemma~\ref{Lemmasyntomicétalecomparisonendofproof}. Composing with itself $k$ times for some integer $k\geq 1$ and using the $\phi_A^{-1}$-linearity of $\phi_{i-1}^{-1}$ gives the map $d^{\tfrac{(p^k-1)m}{p^k} + \tfrac{p^k-1}{p^k(p-1)}}(\phi_{i-1}^{-1})^k$. This map is zero for any integer $k\geq 1$ satisfying $\tfrac{(p^k-1)m}{p^k} + \tfrac{p^k-1}{p^k(p-1)} \geq m$, that is $p^k \geq m(p-1)+1$. The map $1 - d^{\frac{(p-1)m}{p}} \phi_i^{-1}$ is thus a sum of an isomorphism and a nilpotent map, so it is an equivalence. Taking colimit over $m \geq 0$, the map $1-\phi_i^{-1}$ is an isomorphism on $\text{Coker}(\text{H}^k(\lambda))$. This concludes the proof of the result modulo $p$.

    We now prove the result for integral coefficients. Let $R\Phi(S,\Z_p(i)) \in \mathcal{D}(\Z_p)$ be the homotopy cofibre of the comparison map
    $$\gamma_\text{syn}^\text{ét}\{i\} : \Z_p(i)^\text{syn}(S) \longrightarrow R\Gamma_{\text{proét}}(\text{Spec}(S[\tfrac{1}{p}]),\Z_p(i)).$$
    We want to prove that $R\Phi(S,\Z_p(i)) \in \mathcal{D}^{\geq i-1}(\Z_p)$, {\it i.e.}, that $\tau^{\leq i-2} R\Phi(S,\Z_p(i)) \simeq 0$. The truncation of a derived $p$-complete object is derived $p$-complete (\cite[091N]{stacks_project_authors_stacks_2019}). By derived Nakayama, it thus suffices to prove that $$(\tau^{\leq i-2} R\Phi(S,\Z_p(i)))/p \simeq 0.$$ For every integer $k \leq i-3$, the natural map
    $$\text{H}^k((\tau^{\leq i-2} R\Phi(S,\Z_p(i)))/p) \longrightarrow \text{H}^k(R\Phi(S,\Z_p(i))/p) \cong \text{H}^k(R\Phi(S,\F_p(i)))$$
    is an isomorphism, and its target is zero by the first part of the proof. In degree $i-2$, the cohomology group $\text{H}^{i-2}((\tau^{\leq i-2} R\Phi(S,\Z_p(i)))/p)$ is naturally identified with the (classical) reduction modulo $p$ of the cohomology group $\text{H}^{i-2}(R\Phi(S,\Z_p(i)))$, and there is a short exact sequence of abelian groups:
    $$0 \longrightarrow \text{H}^{i-2}(R\Phi(S,\Z_p(i)))/p \longrightarrow \text{H}^{i-2}(R\Phi(S,\F_p(i))) \longrightarrow \text{H}^{i-1}(R\Phi(S,\Z_p(i)))[p] \longrightarrow 0.$$
    The middle term of this short exact sequence is zero by the first part of the proof; so the left one also is, which concludes the proof for integral coefficients. The result modulo $p^n$ can be proved like the result for integral coefficients, or deduced from it by reduction modulo $p^n$.
\end{proof}

\subsection{The syntomic-étale comparison theorem over a $p$\nobreakdash-torsionfree base}

\vspace{-\parindent}
\hspace{\parindent}

In this subsection we prove a refined version of the syntomic-étale comparison theorem (Theorem~\ref{Theoremsyntomicetalecomparison2}), assuming the perfectoid base ring is $p$\nobreakdash-torsionfree.

\begin{lemma}\label{LemmadividedFrobptorsionfree}
    Let $(A,(d))$ be a perfect prism such that $A/d$ is $p$\nobreakdash-torsionfree, and $S$ a $p$\nobreakdash-Cartier smooth $A/d$-algebra. Then for every integer $i\geq 0$, the Frobenius maps and divided Frobenius map
    $$\phi : \Prism_{S/A}/p \longrightarrow L\eta_d (\Prism_{S/A}/p)$$
    $$\phi : \tau^{\leq i} (\mathcal{N}^{\geq i} \Prism_{S/A}/p) \longrightarrow \tau^{\leq i}(d^i\Prism_{S/A}/p)$$
    $$\phi_i : \tau^{\leq i} (\mathcal{N}^{\geq i} \Prism_{S/A}/p) \longrightarrow \tau^{\leq i}(\Prism_{S/A}/p)$$
    are equivalences.
\end{lemma}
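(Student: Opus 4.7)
The plan is to deduce all three equivalences from the $(L\eta)$ and $(\mathcal{N}^{\geq})$ characterisations of Theorem~\ref{TheoremCartiersmoothMain} by carefully reducing modulo $p$, with the hypothesis that $A/d$ is $p$\nobreakdash-torsionfree controlling the interaction of $L\eta_d$, truncations, and the action of $d$ with this reduction.

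First I would observe that, since $(A,(d))$ is perfect, $A \cong W(A/p)$ is $p$\nobreakdash-torsionfree, and the additional hypothesis that $A/d$ is $p$\nobreakdash-torsionfree makes $(p,d)$ into a regular sequence in $A$; equivalently, $d$ is a nonzerodivisor on the perfect $\F_p$-algebra $A/p$. Combined with the Hodge--Tate comparison of Proposition~\ref{propositionpquasismoothproperties}\,$(3)$, which identifies $\text{H}^i(\overline{\Prism}_{S/A}\{i\})$ with the $p$\nobreakdash-completely flat $(A/d)$-module $(\Omega^i_{S/(A/d)})^\wedge_p$, this implies that each cohomology group of $\overline{\Prism}_{S/A}$ is $p$\nobreakdash-torsionfree. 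Propagating back through the $d$-adic filtration (using $p$\nobreakdash-quasisyntomic descent and left Kan extension from the large quasisyntomic case, where $\Prism_{S/A}$ is locally a $(p,d)$-completely flat discrete $A$-algebra), one deduces that each cohomology group of $\Prism_{S/A}$ is $p$\nobreakdash-torsionfree and that the multiplication map $d^i : \Prism_{S/A}/p \xlongrightarrow{\sim} d^i\Prism_{S/A}/p$ is an equivalence in $\mathcal{D}(A/p)$ for all $i \geq 0$.

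For the first equivalence, Theorem~\ref{TheoremCartiersmoothMain}\,$(L\eta)$ (together with the canonical identification $\Prism^{(1)}_{S/A} \simeq \Prism_{S/A}$, valid since $\phi_A$ is an isomorphism on the perfect prism $(A,(d))$) provides an equivalence $\phi : \Prism_{S/A} \xlongrightarrow{\sim} L\eta_d \Prism_{S/A}$, which reduced modulo $p$ gives $\Prism_{S/A}/p \xlongrightarrow{\sim} (L\eta_d \Prism_{S/A})/p$. It then remains to identify $(L\eta_d \Prism_{S/A})/p$ with $L\eta_d(\Prism_{S/A}/p)$, which I would handle by combining Proposition~\ref{LemmaLetaI}\,$(3)$ (identifying the mod-$d$ reduction of $L\eta_d C$ with the Bockstein complex of $\text{H}^\ast(C/d)$), the derived $d$-completeness of both sides, and the $p$\nobreakdash-torsionfreeness of the cohomology of $\overline{\Prism}_{S/A}$ established above. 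The second equivalence then follows from Theorem~\ref{TheoremCartiersmoothMain}\,$(\mathcal{N}^{\geq})$ in an analogous way: reducing $\tau^{\leq i}\phi : \tau^{\leq i}\mathcal{N}^{\geq i}\Prism_{S/A} \xlongrightarrow{\sim} \tau^{\leq i}d^i\Prism_{S/A}$ modulo $p$ gives what we want, with the $p$\nobreakdash-torsionfreeness of cohomology ensuring that the truncations commute with derived reduction modulo $p$ in the relevant degrees. The third equivalence is then immediate from the second, via the factorisation $\phi = d^i \circ \phi_i$ and the multiplication-by-$d^i$ equivalence established in the first step.

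The main obstacle is the commutation of the functor $L\eta_d$ with derived reduction modulo $p$, which fails in general. It holds here precisely because the hypothesis that $A/d$ is $p$\nobreakdash-torsionfree propagates through the Hodge--Tate comparison to give $p$\nobreakdash-torsionfreeness of $\text{H}^\ast(\overline{\Prism}_{S/A})$, making the Bockstein complexes of $\text{H}^\ast(\Prism_{S/A}/d)$ and $\text{H}^\ast(\Prism_{S/A}/(p,d))$ compatible in the requisite $p$-adic sense.
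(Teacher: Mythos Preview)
Your treatment of the first and third equivalences matches the paper's. For the first, the paper cites \cite[Lemma~5.16]{bhatt_specializing_2018} for the commutation of $L\eta_d$ with derived reduction modulo~$p$; your sketch via Proposition~\ref{LemmaLetaI}\,$(3)$, derived $d$-completeness, and $p$-torsionfreeness of $\text{H}^\ast(\overline{\Prism}_{S/A})$ is exactly how that lemma is proved. For the third, both you and the paper reduce to Lemma~\ref{Lemmasyntomicétalecomparisonendofproof}.

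For the second equivalence, however, there is a gap. You propose to reduce the integral equivalence $\tau^{\leq i}\mathcal{N}^{\geq i}\Prism_{S/A} \simeq \tau^{\leq i}d^i\Prism_{S/A}$ modulo~$p$ and then commute the truncation with the reduction. But for a complex $C$, the natural map $(\tau^{\leq i}C)/p \to \tau^{\leq i}(C/p)$ is an equivalence precisely when $\text{H}^{i+1}(C)[p]=0$; so your route requires $p$-torsionfreeness of $\text{H}^{i+1}(\Prism_{S/A})$ and $\text{H}^{i+1}(\mathcal{N}^{\geq i}\Prism_{S/A})$. Your justification---``propagating back through the $d$-adic filtration'' via $p$-quasisyntomic descent and left Kan extension---does not establish this: $p$-Cartier smooth algebras are not in general $p$-quasisyntomic (so descent on the quasisyntomic site does not directly apply), and knowing that the $d$-graded pieces $\text{H}^\ast(\overline{\Prism}_{S/A})$ are $p$-torsionfree does not formally imply the same for $\text{H}^\ast(\Prism_{S/A})$, since extensions and inverse limits need not preserve $p$-torsionfreeness at the level of individual cohomology groups.

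The paper sidesteps this entirely: rather than reducing the integral statement, it \emph{re-runs} the inductive proof of Theorem~\ref{TheoremCartiersmoothMain}\,$(L\eta)\Rightarrow(\mathcal{N}^{\geq})$ directly on the complex $\Prism_{S/A}/p$, taking as input the first equivalence $\Prism_{S/A}/p \simeq L\eta_d(\Prism_{S/A}/p)$ just established. The only new ingredient needed is the base case $\text{H}^0(\Prism_{S/A}/p)[d]=0$, which follows from the short exact sequence
$$0 \longrightarrow \text{H}^{-1}(\Prism_{S/A}/p)/d \longrightarrow \text{H}^{-1}(\overline{\Prism}_{S/A}/p) \longrightarrow \text{H}^0(\Prism_{S/A}/p)[d] \longrightarrow 0$$
together with the vanishing $\text{H}^{-1}(\overline{\Prism}_{S/A}/p) = \text{H}^0(\overline{\Prism}_{S/A})[p] = 0$, which uses only the $p$-torsionfreeness of $\text{H}^\ast(\overline{\Prism}_{S/A})$ that you already have.
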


\begin{proof}
    By Theorem~\ref{TheoremCartiersmoothMain}\,$(L\eta)$, the Frobenius map $$\phi : \Prism_{S/A} \longrightarrow L\eta_d \Prism_{S/A}$$
    is an equivalence, thus so is its derived reduction modulo $p$. Note that $p$ is a nonzerodivisor in the ring~$A$ (\cite[Lemma~$2.28\,(1)$]{bhatt_prisms_2022}). Moreover, $S$ is $p$\nobreakdash-cotangent smooth over $A/d$, so the groups $(\Omega^n_{S/(A/d)})^\wedge_p$ are $p$-flat modules over the $p$\nobreakdash-torsionfree ring $A/d$ and are in particular $p$\nobreakdash-torsionfree. The groups~$\text{H}^n(\overline{\Prism}_{S/A})$ are thus also $p$\nobreakdash-torsionfree (Proposition~\ref{propositionpquasismoothproperties}\,$(3)$), and the natural map $$(L\eta_d \Prism_{S/A})/p \longrightarrow L\eta_d (\Prism_{S/A}/p)$$ is an equivalence in the derived category $\mathcal{D}(A/p)$ (\cite[Lemma~$5.16$]{bhatt_specializing_2018}), which proves the first statement.
    
    The proof of Theorem~\ref{TheoremCartiersmoothMain}\,$(L\eta)\Rightarrow(\mathcal{N}^{\geq})$, where we use that the short exact sequence 
    $$0 \longrightarrow \text{H}^{-1}(\Prism_{S/A}/p)/d \longrightarrow \text{H}^{-1}(\Prism_{S/A}/(p,d)) \longrightarrow \text{H}^0(\Prism_{S/A}/p)[d] \longrightarrow 0$$
    to prove that $\text{H}^0(\Prism_{S/A}/p)$ is $d$\nobreakdash-torsionfree, then adapts readily to prove that the Frobenius map
    $$\phi : \tau^{\leq i} (\mathcal{N}^{\geq i} \Prism_{S/A}/p) \longrightarrow \tau^{\leq i}(d^i\Prism_{S/A}/p)$$
    is an equivalence in the derived category $\mathcal{D}(A/p)$. The proof of the third statement is the same as in Lemma~\ref{Lemmasyntomicétalecomparisonendofproof}.
\end{proof}

\begin{theorem}[Syntomic-étale comparison theorem over a $p$\nobreakdash-torsionfree base]\label{Theoremsyntomicetalecomparison2}
    Let $R$ be a $p$\nobreakdash-torsionfree perfectoid ring, and $S$ a $p$\nobreakdash-Cartier smooth $R$-algebra. Then for any integers $i\geq 0$ and $n\geq 1$, the homotopy cofibres of the maps 
    $$\gamma_\emph{syn}^\emph{ét}\{i\}/p^n : \Z/p^n(i)^\emph{syn}(S) \longrightarrow R\Gamma_{\emph{ét}}(\emph{Spec}(S[\tfrac{1}{p}]),\mu_{p^n}^{\otimes i})$$
    and
    $$\gamma_\emph{syn}^\emph{ét}\{i\} : \Z_p(i)^\emph{syn}(S) \longrightarrow R\Gamma_{\emph{proét}}(\emph{Spec}(S[\tfrac{1}{p}]),\Z_p(i))$$ are in degrees $\geq i$.
\end{theorem}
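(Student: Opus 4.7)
The plan is to mimic the proof of Theorem~\ref{Theoremsyntomicetalecomparison}, exploiting the two improvements afforded by the $p$-torsionfreeness of $R = A/d$ and collected in Lemma~\ref{LemmadividedFrobptorsionfree}: the Frobenius $\phi : \Prism_{S/A}/p \xrightarrow{\sim} L\eta_d(\Prism_{S/A}/p)$ is an equivalence in $\mathcal{D}(A/p)$, and the divided Frobenius $\phi_i : \tau^{\leq i}(\mathcal{N}^{\geq i}\Prism_{S/A}/p) \to \tau^{\leq i}(\Prism_{S/A}/p)$ is an equivalence in degrees $\leq i$ rather than only $\leq i-1$. This single extra degree of regularity shifts the entire argument by one. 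As at the end of the proof of Theorem~\ref{Theoremsyntomicetalecomparison}, the integral and mod $p^n$ statements will follow from the mod $p$ statement via derived Nakayama applied to $\tau^{\leq i-1} R\Phi(S,\Z_p(i))$, so it suffices to prove that the cofibre $R\Phi(S,\F_p(i))$ of $\gamma_{\text{syn}}^{\text{ét}}\{i\}/p$ lies in $\mathcal{D}^{\geq i}(\F_p)$.

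Combining Theorem~\ref{Theoremprismaticétalecomparison}, Remark~\ref{RemarkNygaarddinverted} and the extended equivalence of $\phi_i$, one reduces exactly as in Theorem~\ref{Theoremsyntomicetalecomparison} to showing that the induced map $1 - \phi_i^{-1} : \text{hocofib}(\tau^{\leq i} \lambda) \to \text{hocofib}(\tau^{\leq i} \lambda)$ is an isomorphism on cohomology in degrees $\leq i-1$, where $\lambda : \Prism_{S/A}/p \to \Prism_{S/A}[\tfrac{1}{d}]/p$ is the localisation. For each $k \leq i-1$, the short exact sequence
$$0 \longrightarrow \text{Coker}(\text{H}^k(\lambda)) \longrightarrow \text{H}^k(\text{hocofib}(\lambda)) \longrightarrow \text{Ker}(\text{H}^{k+1}(\lambda)) \longrightarrow 0$$
splits this into the cokernel and the $d$-power torsion kernel parts. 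The cokernel part is verbatim the argument of Theorem~\ref{Theoremsyntomicetalecomparison}: on $\text{H}^k(\Prism_{S/A}/p)/d^m$ the operator $d^{(p-1)m/p}\phi_i^{-1}$ is nilpotent by the same iteration, so $1 - \phi_i^{-1}$ is invertible. The new work lies in the kernel at the top degree $k+1 = i$: using $\phi_A(d) = d^p$ in $A/p$, the same bookkeeping as before reduces the nilpotence of $\phi_i^{-1}$ on $\text{H}^i(\Prism_{S/A}/p)[d^\infty]$ to its vanishing on $\text{H}^i(\Prism_{S/A}/p)[d]$.

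I would prove this top-degree vanishing, a $p$-torsionfree strengthening of Lemma~\ref{lemmakeyfrobeniuszeroondtorsion}, by running that lemma's diagram chase one cohomological degree higher: in the commutative square
$$\begin{tikzcd}
\text{H}^i(\mathcal{N}^{\geq i} \Prism_{S/A}/p) \ar[r,"\phi","\sim"'] \ar[d,"\text{can}"] & \text{H}^i(d^i \Prism_{S/A}/p) \ar[d,"\text{can}"] \\
\text{H}^i(\Prism_{S/A}/p) \ar[r,"\phi","\sim"'] & \text{H}^i(L\eta_d(\Prism_{S/A}/p))
\end{tikzcd}$$
both horizontal arrows are isomorphisms by Lemma~\ref{LemmadividedFrobptorsionfree}, whereas in Theorem~\ref{Theoremsyntomicetalecomparison} only their restrictions to $p$-torsion were available. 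Proposition~\ref{LemmaLetaI}(1) applied directly to $\Prism_{S/A}/p$ — permitted by the identification $(L\eta_d\Prism_{S/A})/p \simeq L\eta_d(\Prism_{S/A}/p)$, which is the sole place the $p$-torsionfreeness intervenes — identifies the kernel of the right vertical map with the $d$-torsion of the source, and the $\phi_A$-semilinearity of $\phi$ transports this to vanishing of the left vertical map on $d^{1/p}$-torsion. For $x \in \text{H}^i(\Prism_{S/A}/p)[d]$, the $\phi_i$-preimage $y$ satisfies $\phi_i(d^{1/p}y) = \phi_A(d^{1/p})\phi_i(y) = dx = 0$, so $d^{1/p}y = 0$ by injectivity of $\phi_i$ in degree $i$, giving $\phi_i^{-1}(x) = \text{can}(y) = 0$ as required.

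The main obstacle is locating precisely this single place where the $p$-torsionfreeness hypothesis enters — namely the clean interaction $(L\eta_d\Prism_{S/A})/p \simeq L\eta_d(\Prism_{S/A}/p)$ — and verifying that this strengthening of the inputs is enough to propagate the gain of one cohomological degree throughout the syntomic-étale comparison argument.
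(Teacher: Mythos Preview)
Your proposal is correct and follows essentially the same route as the paper's proof: reduce to mod $p$, invoke Lemma~\ref{LemmadividedFrobptorsionfree} to extend the divided Frobenius equivalence to degree $i$, and rerun the cokernel/kernel analysis of Theorem~\ref{Theoremsyntomicetalecomparison} one degree higher, with the key new step being the vanishing of $\phi_i^{-1}$ on $\text{H}^i(\Prism_{S/A}/p)[d]$ via the $L\eta_d$ square. The paper also explicitly flags that Lemma~\ref{LemmadividedFrobptorsionfree} is needed in the cokernel argument at $k=i-1$ (to invert $\phi_{i-1}$ mod $p$ in degree $i-1$), a point you are using implicitly when you say the iteration goes through ``verbatim''.
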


\begin{proof}
    As in the proof of Theorem~\ref{Theoremsyntomicetalecomparison}, we first reduce to the case where $S$ is a $p$-complete ring and, by derived Nakayama, it suffices to prove the result modulo $p$. We keep the same notation as in the proof of Theorem~\ref{Theoremsyntomicetalecomparison}. We want to prove that $$R\Phi(S,\F_p(i)) \in \mathcal{D}^{\geq i}(\F_p),$$ {\it i.e.}, that $R\Phi(S,\F_p(i))$ is zero in degrees $\leq i-1$. This statement depends only on
    $$\tau^{\leq i-1} \text{hocofib}(\lambda_i) \xlongrightarrow{\phi_i-1} \tau^{\leq i-1} \text{hocofib}(\lambda),$$
    and thus only on the commutative diagram
    $$\xymatrix{ 
		\relax
		\tau^{\leq i} (\mathcal{N}^{\geq i} \Prism_{S/A}/p) \ar[r]^{\phi_i -1} \ar[d]^{\tau^{\leq i}\lambda_i} & \tau^{\leq i}(\Prism_{S/A}/p) \ar[d]^{\tau^{\leq i}\lambda}
		\\
		\tau^{\leq i} (\Prism_{S/A}[\tfrac{1}{d}]/p) \ar[r]^{\phi_i -1} & \tau^{\leq i} (\Prism_{S/A}[\tfrac{1}{d}]/p).
	}$$
	In terms of this diagram, we want to prove that the map $$\text{hocofib}(\tau^{\leq i}\lambda_i) \xlongrightarrow{\phi_i-1} \text{hocofib}(\tau^{\leq i}\lambda)$$ is an isomorphism in degrees $\leq i-2$ and is injective in degree $i-1$.
	
	The divided Frobenius map $\phi_i : \mathcal{N}^{\geq i} \Prism_{S/A}/p \rightarrow \Prism_{S/A}/p$ is an isomorphism in degrees $\leq i$ by Lemma~\ref{LemmadividedFrobptorsionfree}. Define $$1-\phi_i^{-1} : \tau^{\leq i} (\Prism_{S/A}/p) \longrightarrow \tau^{\leq i} (\Prism_{S/A}/p)$$ as the map $\phi_i - 1 : \tau^{\leq i} (\mathcal{N}^{\geq i} \Prism_{S/A}/p) \rightarrow \tau^{\leq i} (\Prism_{S/A}/p)$ precomposed with the inverse of the divided Frobenius map $\phi_i : \tau^{\leq i} (\mathcal{N}^{\geq i} \Prism_{S/A}/p) \xrightarrow{\sim} \tau^{\leq i} (\Prism_{S/A}/p)$. Similarly the divided Frobenius map \hbox{$\phi_i : \Prism_{S/A}[\tfrac{1}{d}]/p \rightarrow \Prism_{S/A}[\tfrac{1}{d}]/p$} is an equivalence (Theorem~\ref{TheoremCartiersmoothMain}\,$(L\eta)$), and we define $$1 - \phi_i^{-1} : \tau^{\leq i}(\Prism_{S/A}[\tfrac{1}{d}]/p) \longrightarrow \tau^{\leq i}(\Prism_{S/A}[\tfrac{1}{d}]/p)$$ as the map $\phi_i-1 : \tau^{\leq i}(\Prism_{S/A}[\tfrac{1}{d}]/p) \rightarrow \tau^{\leq i}(\Prism_{S/A}[\tfrac{1}{d}]/p)$ precomposed with the inverse of the divided Frobenius map $\phi_i : \tau^{\leq i} (\Prism_{S/A}[\tfrac{1}{d}]/p) \xrightarrow{\sim} \tau^{\leq i} (\Prism_{S/A}[\tfrac{1}{d}]/p)$. We thus want to prove that the induced map $$\text{hocofib}(\tau^{\leq i}\lambda) \xlongrightarrow{1-\phi_i^{-1}} \text{hocofib}(\tau^{\leq i}\lambda)$$ is an isomorphism in degrees $\leq i-2$ and is injective in degree $i-1$. We prove that it is an isomorphism in degrees $\leq i-1$.
	
	Let $k$ be an integer $\leq i-1$. There is a short exact sequence of $A/p$-modules:
    $$0 \longrightarrow \text{Coker}(\text{H}^k(\lambda/p)) \longrightarrow \text{H}^k(\text{hocofib}(\lambda/p)) \longrightarrow \text{Ker}(\text{H}^{k+1}(\lambda/p)) \longrightarrow 0.$$
    To prove that $1-\phi_i^{-1}$ is an isomorphism on $\text{H}^k(\text{hocofib}(\lambda/p))$, let us prove that it is an isomorphism on both $\text{Coker}(\text{H}^k(\lambda/p))$ and $\text{Ker}(\text{H}^{k+1}(\lambda/p))$.

    For $\text{Coker}(\text{H}^k(\lambda/p))$, the argument is the same as in the proof of Theorem~\ref{Theoremsyntomicetalecomparison}, where we need Lemma~\ref{LemmadividedFrobptorsionfree} for the case of $k=i-1$.
    
    For $\text{Ker}(\text{H}^{k+1}(\lambda/p))$, we follow the lines of the proof of Theorem~\ref{Theoremsyntomicetalecomparison}. It suffices to prove the case of $k=i-1$, {\it i.e.}, that $1-\phi_i^{-1}$ is an isomorphism on the $A/p$-module $\text{H}^i(\Prism_{S/A}/p)[d^\infty]$. It then suffices to prove that $\phi_i^{-1}$ is nilpotent on the $A/p$-module $\text{H}^i(\Prism_{S/A}/p)[d]$; we prove that it is zero. By definition of the map $\phi_i^{-1}$, it is equivalent to proving that the canonical map
    $$\text{can} : \text{H}^i(\mathcal{N}^{\geq i}\Prism_{S/A}/p) \longrightarrow \text{H}^i(\Prism_{S/A}/p)$$
    is zero on the $d^{1/p}$-torsion subgroup $\text{H}^i(\mathcal{N}^{\geq i}\Prism_{S/A}/p)[d^{1/p}]$. By Lemma~\ref{LemmadividedFrobptorsionfree}, there is a commutative diagram
    $$\xymatrix{ 
		\relax
		\text{H}^i((\mathcal{N}^{\geq i} \Prism_{S/A})/p) \ar[r]^{\phi}_{\cong} \ar[d]^{\text{can}} & \text{H}^i(d^i\Prism_{S/A}/p) \ar[d]^{\text{can}}
		\\
		\text{H}^i(\Prism_{S/A}/p) \ar[r]^{\phi}_{\cong} & \text{H}^i(L\eta_d(\Prism_{S/A}/p)),
	}$$
    where the right vertical map is defined in \cite[Lemma~$6.9$]{bhatt_integral_2018}. As in the proof of \ref{lemmakeyfrobeniuszeroondtorsion}, the right vertical map of this diagram is surjective, with kernel given by the $d$-torsion subgroup of $\text{H}^i(d^i\Prism_{S/A}/p)$. So the left vertical map is also surjective, with kernel given by the $d^{1/p}$-torsion subgroup $\text{H}^i((\mathcal{N}^{\geq i} \Prism_{S/A})/p)[d^{1/p}]$. Hence the result.
\end{proof}

\begin{remark}[Comparison with \cite{bhatt_topological_2019}]\label{RemarkcomparisonBMS2smoothoverOC}
    Let $C$ be a complete and algebraically closed extension of $\Q_p$, and $\mathcal{O}_C$ be its ring of integers. In particular, $\mathcal{O}_C$ is a $p$-torsionfree perfectoid ring. When $S$ is a smooth $\mathcal{O}_C$-algebra, the previous result was already proved by Bhatt--Morrow--Scholze (\cite[Theorem~$10.1$]{bhatt_topological_2019}). In this situation, their result is slighlty stronger, as they prove that the syntomic-étale comparison map is an isomorphism in degree $i$ (and is thus not only injective). Note that this fact does not hold for general $p$-Cartier smooth $\mathcal{O}_C$-algebras, {\it e.g.}, for general valuation ring extensions of~$\mathcal{O}_C$. Their method uses crucially the functor $L\eta_{\mu}$, and in particular the existence of a compatible system of $p$-power roots of unity in the perfectoid ring $\mathcal{O}_C$.
\end{remark}

\begin{remark}
    Without assuming that the perfectoid base is $p$\nobreakdash-torsionfree, the previous result would be false: for instance, in characteristic $p$, the homotopy cofibre of the syntomic-étale comparison map is typically nonzero in degree $i-1$ (Proposition~\ref{propositionsyntomicetalecomparisonincharp}).
\end{remark}

\nocite{*}

\bibliographystyle{alpha}

\end{document}